%
%
%
%
%

\documentclass[11pt]{amsart}
\usepackage{amsmath,amscd,amssymb,amsfonts,amsthm,hyperref}

\usepackage{xcolor}  
\hypersetup{   colorlinks,   linkcolor={red!50!black},    citecolor={blue!70!black},   urlcolor={blue!80!black}}

\textwidth15.8 cm
\oddsidemargin.4cm
\evensidemargin.4cm
%
\usepackage[cmtip,matrix,arrow]{xy}


\newtheorem{theorem}{Theorem}[section]
\newtheorem{corollary}[theorem]{Corollary}
\newtheorem{proposition}[theorem]{Proposition}

\newtheorem{lemma}[theorem]{Lemma}
\theoremstyle{definition}    
\newtheorem{definition}[theorem]{Definition}
\theoremstyle{remark}

\newtheorem{remark}[theorem]{Remark}

\newtheorem{example}[theorem]{Example}
\newtheorem{examples}[theorem]{Examples}
%

\renewcommand{\AA}{\mathbb{A}}

\newcommand{\K}{\mathbb{K}}
\newcommand{\W}{\mathcal{W}}

\renewcommand{\L}{\mathcal{L}}

\newcommand{\ca}{\mathcal}

\newcommand{\X}{\mathcal{X}}
\newcommand{\F}{\mathcal{F}}

\newcommand{\R}{\mathbb{R}}

\newcommand{\Z}{\mathbb{Z}}


\newcommand\lie[1]{\mathfrak{#1}}
\renewcommand{\k}{\lie{k}}

\newcommand{\g}{\lie{g}}

\newcommand{\on}{\operatorname}

\newcommand{\Aut}{ \on{Aut} }


\newcommand{\Hom}{ \on{Hom}}

\renewcommand{\ker}{ \on{ker}}



\newcommand\qu{/\kern-.7ex/} 

\newcommand{\hra}{\hookrightarrow}

\renewcommand{\d}{{\mbox{d}}}

\newcommand{\f}{\frac}

\newcommand{\p}{\partial}

\newcommand\hh{{\f{1}{2}}}

\newcommand{\ti}{\tilde}
\newcommand{\eeq}{\end{eqnarray*}}
\newcommand{\beq}{\begin{eqnarray*}}

\renewcommand{\H}{\ca{H}}
\renewcommand{\K}{\ca{K}}

\newcommand{\D}{\ca{D}}

\newcommand{\lin}{{\on{lin}}}

\newcommand{\mf}{\mathfrak}
\newcommand{\rra}{\rightrightarrows}
\renewcommand{\subset}{\subseteq}
\renewcommand{\supset}{\supseteq}
\setcounter{tocdepth}{1}
\newcommand{\I}{\ca{I}}

\newcommand{\wt}{\widetilde}

\newcommand{\su}{\mathsf{u}}
\newcommand{\sj}{\mathsf{j}}
\newcommand{\si}{\mathsf{i}}

\newcommand{\alg}{{\on{alg}}}
\newcommand{\gr}{\on{gr}}

%
%

\begin{document}

\title{Singular Lie filtrations and weightings} 
\author{Yiannis Loizides}
\author{Eckhard Meinrenken}

\begin{abstract}  We study weightings (a.k.a. quasi-homogeneous structures) arising from manifolds with singular Lie filtrations. This generalizes constructions of Choi-Ponge, Van Erp-Yuncken, and Haj-Higson for (regular) Lie filtrations.   		
\end{abstract}
\maketitle

\vskip1cm
\bigskip


\section{Introduction}
A Lie filtration on a manifold $M$ is a $\Z$-filtration of the tangent bundle $TM$ by subbundles 
\begin{equation}\label{eq:lifiltration1} TM=H_{-r}\supset \cdots \supset H_{-1}\supset 0,\end{equation}
in such a way that that the induced filtration on the Lie algebra of vector fields $\mf{X}(M)$ is compatible with brackets. The concept of such \emph{filtered manifolds} (called \emph{Carnot manifolds} in \cite{choi:tan}) was introduced by T.~Morimoto \cite{mor:geo} as a generalization of the differential systems of Tanaka \cite{tan:dif}; independently, the concept was considered by Melin \cite{mel:lie}. Filtered manifolds have been much studied in recent years as a framework for certain types of hypo-elliptic operators. Roughly, these operators are elliptic in a \emph{weighted} sense, where weights are assigned according to the Lie filtration. Unlike the usual elliptic setting, the principal symbols of these operators do not live on the usual tangent bundle, but on the
 \emph{osculating groupoid} \cite{erp:tan} (also called \emph{tangent cone} \cite{choi:tan}). The latter is a 
 family of nilpotent Lie groups $P\to M$, obtained by integrating the family of Lie algebras
  \[ \mathfrak{p}=\on{gr}(TM)=\bigoplus_i\ H_{-i}/H_{-i+1}.\]  

An index theorem for this context was recently obtained by Mohsen \cite{moh:ind}, generalizing Van Erp's index theorem for contact manifolds \cite{erp:ati1,erp:ati2}. The proofs are based on Connes' \emph{tangent groupoid} strategy \cite{con:non}. The relevant deformation groupoid  was constructed 
by Choi-Ponge \cite{choi:tan}, Haj-Higson \cite{hig:eul}, Van Erp-Yuncken \cite{erp:tan}, and Mohsen \cite{moh:def}, using different techniques. Later work of Van Erp-Yuncken \cite{erp:gro}, building on ideas of Debord-Skandalis \cite{deb:lie}, used this viewpoint to develop a pseudo-differential calculus for this context. 

Haj-Higson \cite{hig:eul} extended the construction of  osculating groupoid to define a normal cone, and corresponding deformation space, for \emph{filtered submanifolds} $N\subset M$ of filtered manifolds. This was obtained by using a filtration on the algebra of functions on $M$ by a weighted order of vanishing, as determined by the Lie filtration. The normal cone is described algebraically as the character spectrum of the  associated graded algebra, and the 
deformation space  is the character spectrum of the Rees algebra associated to the filtration. Haj-Higson proved that 
the normal cone is a quotient $P|_N/R$, where $P$ and $R$ are the osculating groupoids of $M$ and $N$, respectively. 

In \cite{loi:wei}, we found that the construction of the normal cone and the associated deformation space only requires a much weaker notion of \emph{weighting along $N$}. This concept was introduced by Melrose under the name of \emph{quasi-homogeneous structure}, in his lecture notes on manifolds with corners \cite{mel:cor}. One possible description of weightings is in terms of a filtration on the algebra of functions, and allows for a definition of weighted normal bundle 
in terms of the associated graded algebra
\begin{equation}\label{eq:weightednormalbundle} \nu_\W(M,N)=\Hom_\alg(\on{gr}(C^\infty(M)),\R)\end{equation}
just as in \cite{hig:eul}. 
Equivalently,  $r$-th order weightings admit a description in terms of certain subbundles $Q\to N$ of the $r$-th tangent bundle $T_rM\to M$. From this perspective,  the weighted normal bundle is realized as a quotient 
\begin{equation}\label{eq:weightedn} \nu_\W(M,N)=Q/\sim\end{equation}
generalizing the familiar description of the usual normal bundle as $\nu(M,N)=TM|_N/\sim$. 

Weighted normal bundles (and the associated deformation spaces) are tailor-made for discussions of weighted normal forms of geometric structures, such as those arising in singularity theory 
(see e.g. Golubitsky-Guillemin \cite{gol:sta}). Weightings also allow for a coordinate-free definition of \emph{weighted blow-ups}, which play an important role in algebraic and symplectic geometry (see e.g., the result of Guillemin-Sternberg \cite{gu:bir} on birational equivalence).  Given a collection $N_1,\ldots,N_j$ of submanifolds of $M$ that intersect cleanly (in the sense that there are local charts in which the $N_i$ are given by coordinate subspaces), one obtains a multi-weighting given by the order of vanishing on the $N_i$, and an associated \emph{total weighting}. This type of situation appears, for example, in orbit  type decompositions of $G$-manifolds and their resolutions (see Albin-Melrose \cite{alb:res}). 

In the present article, we shall study weightings in the context of  \emph{singular Lie filtrations}, given by a filtration of the sheaf of vector fields,
\begin{equation}\label{eq:singularliefiltration} \mf{X}_M=\H_{-r}\supset\cdots \supset \H_{-1}\supset 0\end{equation}
by locally finitely generated $C^\infty_M$-submodules, with $[\H_{-i},\H_{-j}]\subset \H_{-i-j}$. 
Singular Lie filtrations appear in a large variety of contexts. They arise as Carnot structures in sub-Riemannian geometry \cite{agr:com,bel:tan}, and play a central role  in recent work of Androulidakis, Mohsen, Van Erp and Yuncken \cite{and:inprep} on 
 hypo-elliptic operators, encompassing a broader class than those associated with regular Lie filtrations.

In Section \ref{subsec:sing}, we introduce the notion of a  \emph{$\H_\bullet$-clean}  submanifold $N\subset M$, 
which essentially means that sections of the normal bundle $\nu(M,N)$ given as images of $\H_{-i}$ are in fact 
subbundles. One of our main results is the following (see Theorem \ref{th:mainresult}):
\bigskip

\noindent{\bf Theorem.} Let $M$ be a manifold with a singular Lie filtration $\H_\bullet$, and  let 
 $N\subset M$ be an $\H_\bullet$-clean submanifold. Then $M$ acquires a canonical weighting along $N$, in such a way that the vector fields in $\H_{-i}$ have filtration degree $-i$.
\bigskip 

The proof of this result amounts to a construction of local coordinates adapted to the singular Lie filtration. To describe the subbundle $Q\subset T_rM$ corresponding to the weighting, we  observe that a singular Lie filtration of $M$ determines a singular foliation of $T_rM$. We then show that $Q$ is obtained as the `flow-out' of $T_rN\subset T_rM$ under this singular foliation; here the cleanness assumption guarantees that this flow-out is smooth. 

This gives a first description of the weighted normal bundle as a quotient \eqref{eq:weightedn}. We will also give a more direct description, by associating to each $m\in M$ a nilpotent Lie algebra $\mathfrak{p}_m$ given as a pullback of the sheaf of Lie algebras $\bigoplus\H_{-i}/\H_{-i+1}$. If $m\in N$, we define $\mathfrak{r}_m$ similarly, by using vector fields in $\H_{-i}$ that are tangent to $N$ . 
Letting $P_m\supset R_m$ be the corresponding groups, we show that the fibers of the weighted normal bundle are the homogeneous spaces $P_m/R_m$. For the case of a regular Lie filtration, this recovers the result of Haj-Higson mentioned above.

\section{Weightings}
The basic idea of a weighting is to have a notion of `weighted order of vanishing' along submanifolds $N\subset M$. 
This appeared in work of Melrose \cite{mel:cor} under the name of \emph{quasi-homogeneous structure}. 
Some foundational aspects of weightings, such as the concepts of \emph{weighted normal bundles} and \emph{weighted deformation spaces}, were developed in \cite{loi:wei}. Let us briefly summarize some of this material, starting with the definitions.  We adopt a sheaf-theoretic language, so that all constructions carry over to the holomorphic or analytic categories with straightforward changes.   

\subsection{Definitions}
Let  $w_1,\ldots,w_n\in  \Z_{\ge 0}$ be a given sequence of \emph{weights}. An upper bound for the weight sequence will be call its \emph{order}.

For open subsets $U\subset \R^n$, consider the filtration of the algebra $C^\infty(U)$ of smooth functions, 
where  $C^\infty(U)_{(i)}$ is the ideal of functions generated by monomials $x^s=x_1^{s_1}\cdots x_n^{s_n}$ with  $s\cdot w=s_1w_1+\ldots+s_nw_n\ge i$. A \emph{weighted atlas} on an $n$-dimensional manifold $M$ is given by coordinate charts such that the transition functions between two charts are filtration preserving; a 
maximal weighted atlas is a \emph{weighting on $M$}. The local coordinates from a weighted atlas are  called \emph{weighted coordinates}. The weighting determines a closed submanifold 
\[ N\subset M,\] 
given in local weighted coordinates as the vanishing set of the coordinates $x_a$ such that $w_a>0$. In particular, the weighted coordinates are submanifold coordinates for $N$. The weighting gives  a  filtration of the function sheaf by ideals
\[ C^\infty_M=C^\infty_{M,(0)}\supset C^\infty_{M,(1)}\supset \cdots\]
where  $C^\infty(U)_{(i)}$ has the description above whenever  $U\subset M$ is the domain of a weighted coordinate chart. 
The definition implies that the filtration on functions is multiplicative:
\[ C^\infty_{M,(i)}C^\infty_{M,(j)}\subset C^\infty_{M,(i+j)}\] 
and that  $C^\infty_{M,(1)}$ is the vanishing 
ideal sheaf $\I$ of  $N$. We think of the filtration as giving the weighted order of vanishing along $N$, and we speak of a \emph{weighting along $N$}. 
Furthermore, we obtain a filtration of the normal bundle
\begin{equation}\label{eq:filtration} \nu(M,N)=F_{-r}\supset \cdots \supset F_{-1}\supset F_0=0\end{equation}
where for all $i\ge 1$, 
\begin{equation}\label{eq:quotient1}
C^\infty_{M,(i+1)}/C^\infty_{M,(i+1)}\cap \I^2
=\Gamma_{\on{ann}(F_{-i})}
\end{equation}
where $\on{ann}(F_{-i}\subset \nu(M,N)^*$ is the annihilator bundle. That is, the differentials of functions of filtration degree $i+1$ vanish in the direction of $F_{-i}$. 
Note that  $\dim F_{-i}=\#\{w_a\colon w_a\le i\}$. 

Conversely, weightings of order $r$ along $N$ are characterized as multiplicative filtrations of $C^\infty_M$ with 
$C^\infty_{M,(1)}=\I$, with the property  \eqref{eq:quotient1} for a suitable filtration \eqref{eq:filtration} of the normal bundle, 
and with the additional property that for $i>1$, 
\begin{equation}\label{eq:extra}  C^\infty_{M,(i)}\cap \I^2=\sum_{j<i}    C^\infty_{M,(j)}\cdot C^\infty_{M,(i-j)}.\end{equation}
The filtration on the sheaf of functions determines a filtration on the sheaves of differential forms, vector fields, and other tensor fields. In particular, 
\begin{equation}\label{eq:vf_filtration}
 \mf{X}_M=\mf{X}_{M,(-r)}\supset \mf{X}_{M,(-r+1)}\supset  \cdots\supset  \mf{X}_{M,(0)}\supset \cdots\end{equation}
where $\mf{X}_{M,(j)}$ is the sheaf of vector fields $X$ with the property that the Lie derivative on functions raises the filtration degree on functions by $j$. The sections of $\mf{X}_{M,(0)}$ are the infinitesimal automorphisms of the weighting. 
Letting $\mf{X}_M^N$ be the subsheaf of vector fields tangent to $N$, with its induced filtration, we have 
that 
\[ \mf{X}_{M,(j)}/\mf{X}^N_{M,(j)}=\Gamma_{F_j}.\]
(For $j\ge 0$ we have $\mf{X}_{M,(j)}=\mf{X}^N_{M,(j)}$, since infinitesimal automorphisms of the weighting are in particular tangent to $N$.)  
\begin{remark}
One can generalize the definition to non-closed or immersed submanifolds $\si\colon N\to M$ by applying the definition above to pairs of open subsets $(U,V)\subset (M,N)$ where $\si(V)$ is a closed embedded submanifold of $U$, and requiring agreement between the local weightings on overlaps. Globally one obtains a filtration of the pullback sheaf $\si^{-1}C^\infty_M$.
\end{remark}

\subsection{The weighted normal bundle}
For any closed submanifold $N\subset M$, with vanishing ideal $\I\subset C^\infty_M$, the sheaf 
$\bigoplus_i \I^i/\I^{i+1}$ may be regarded as the sheaf of fiberwise polynomial functions on the normal bundle 
$\nu(M,N)=TM|_N/TN$. Such functions are the (local) sections of a graded algebra bundle 
$\mathsf{A}\to N$, and the normal bundle is obtained by taking its fiberwise spectrum, 
$\nu(M,N)|_m=\Hom_\alg(\mathsf{A}_m,\R)$. 
\subsubsection{Definition of weighted normal bundle}
This algebraic description of the normal bundle generalizes to the weighted case. Given a weighting of $M$ along $N$,  
the sheaf of associated graded algebras $\gr (C^\infty_M)$ is supported on $N$, and is the sheaf of sections 
of a graded algebra bundle $\mathsf{A}\to N$, with components $\mathsf{A}^i\to N$ of finite rank. In weighted 
coordinates over $U\subset M$, the space of sections of $\mathsf{A}^i|_{N\cap U}$ is spanned  by monomials $x^s$ such that $s\cdot w=i$, with  $s_b=0$ for $w_b=0$. We define 
 a  \emph{weighted normal bundle}
\[ \nu_\W(M,N)\to N\]
by taking the fiberwise spectrum:  $\nu_\W(M,N)|_m=\Hom_\alg(\mathsf{A}_m,\R)$. This is naturally a smooth fiber 
bundle of rank equal to the codimension of $N$ in $M$; the smooth structure on $\nu_\W(M,N)$ is uniquely determined by the property that for any given element of $\gr(C^\infty(U))$, the corresponding function on $\nu_\W(M,N)|_{N\cap U}$ is again smooth. 

\subsubsection{Graded bundles}\label{subsec:graded}
The weighted normal bundle  does not have a natural vector bundle structure, in general. It does, however, carry an action of the monoid of multiplicative scalars, 
\begin{equation} \label{eq:kappa}
 \kappa\colon \R\times \nu_W(M,N)\to \nu_\W(M,N),\ \ \kappa(t,\mathsf{x})=\kappa_t(\mathsf{x}).\end{equation}
This is induced fiberwise by the action of $t\in \R$ on 
$\mathsf{A}^i|_m$ as multiplication by $t^i$. Grabowski-Rotkiewicz \cite{gra:hig,gra:gra} refer to a smooth manifold $E$ with a monoid action 
$\kappa\colon \R\times E\to E$ as a  \emph{graded bundle}. As the name suggests, a graded bundle  is automatically a fiber bundle over $N=\kappa_0(M)$. Negatively graded vector bundles $V=\bigoplus_{i>0} V^{-i}\to N$ are special cases, 
where $\kappa_t$ is given on $V^{-i}$ as multiplication by $t^i$. (The reason for using a \emph{negative} grading is that we want the dual space, consisting of linear functions on $V$, to be positively graded.) Other basic examples include the higher tangent bundles $T_rM\to M$ to be discussed in Section \ref{subsec:trm}. 
The \emph{linear approximation} of a graded bundle is defined by application of the (usual) normal bundle functor, 
\[ E_\lin=\nu(E,N).\] 
It has the structure of a graded \emph{vector} bundle over $N$, with the grading obtained by applying the normal bundle functor to $\kappa$.  As shown in \cite{gra:hig,gra:gra}, there always 
exists an isomorphism of graded bundles $E\cong E_\lin$, but such an isomorphism  (called a linearization) is not unique.
In the case of $E=\nu_\W(M,N)$, 
for a given weighting of $M$ along $N$, the linear approximation is \cite[Proposition 4.4]{loi:wei}
\[ \nu_\W(M,N)_\lin=\on{gr}(\nu(M,N)),\]
the associated graded bundle for the filtration \eqref{eq:filtration}.  

\subsubsection{Homogeneous approximations}
For $f\in C^\infty(U)_{(i)}$ let $f^{[i]}\in \gr^i(C^\infty(U))$ be its image. By definition of the weighted normal bundle, it may (and will) be regarded as a function  
$f^{[i]}\in C^\infty(\nu_W(M,N)|_{N\cap U})$. This function is  homogeneous of degree $i$ with respect to the $(\R,\cdot)$-action, and is called the \emph{homogeneous approximation} of $f$.  For $i=0$, 
$f^{[0]}$ is the pullback of the restriction of $f|_{N\cap U}$. More generally, 
any tensor field $\alpha$ of filtration degree $i$ on $U$ determines a tensor field $\alpha^{[i]}$, homogeneous of degree $i$, on $\nu_\W(M,N)$. In particular, this applies to vector fields and differential forms; note also that  
homogeneous approximation is compatible with the usual operations from Cartan's calculus. 

If $X$ is a vector field of strictly negative filtration degree $j<0$, then the vector field 
$X^{[j]}$ on $\nu_\W(M,N)$ is \emph{vertical} (tangent to the fibers of $\nu_\W(M,N)\to N$). To see this, it suffices to note that $X^{[j]}$ vanishes on pullbacks of functions, which in turn follows from  $X^{[j]}f^{[0]}=(Xf)^{[j]}=0$ for 
$j<0$. 

If $x_1,\ldots,x_n$ are local weighted coordinates on 
$U\subset M$ (thus $x_a\in C^\infty(U)$ has weight $w_a$), then the functions 
\[ x_1^{[w_1]},\ldots,\ x_n^{[w_n]}\] 
serve as local coordinates on $\nu_\W(M,N)|_U=\nu_\W(U,U\cap N)$; the homogeneous lifts of the corresponding coordinate vector fields are
\[ (\f{\p}{\p x_a})^{[-w_a]}=\f{\p}{\p x_a^{[w_a]}}.\] 

\begin{example}
	Consider the case of a symplectic manifold $(M,\omega)$ of dimension $2n$
	with an isotropic submanifold $N\subset M$ of dimension $k$. Let 
	$\I$ be the vanishing ideal of $N$. 
	Let $\ca{N}(\I)$ be the Poisson normalizer of $\I$, i.e., the sheaf of functions $f$ such that $\{f,\cdot\}$ preserves $\I$. 
	The intersection $\I\cap \ca{N}(\I)$ is an ideal in $C^\infty_M$, and we obtain a weighting of order $r=2$, with 
	\[ C^\infty_{M,(1)}=\I,\ \ C^\infty_{M,(2)}=\I\cap \ca{N}(\I).\]
	The resulting filtration of the normal bundle is 
	\[ \nu(M,N)\supset TN^\omega/TN\supset 0.\]
	By standard normal form theorems, there exists local coordinates $q_1,\ldots,q_n,p_1,\ldots,p_n$ near any given point of 
	$N$ such that the vanishing ideal $\I$ of $N$ is spanned by $q_{k+1},\ldots,q_n,p_1,\ldots,p_n$. In such coordinates, the ideal 
	$\I\cap \ca{N}(\I)$ is spanned by $\I^2$ together with $p_1,\ldots,p_k$. The coordinates $q_1,\ldots,q_k$ have weight $0$, the dual coordinates $p_1,\ldots,p_k$ have weight $2$, and the remaining coordinates $q_a,p_a$ for $a>k$ have weight $1$. Note that the 
	symplectic form has filtration degree $2$ for this weighting, as is evident from the coordinate description 
	$\omega=\sum \d q_i\wedge \d p_i$. Hence it has a homogeneous approximation $\omega^{[2]}\in \Omega^2(\nu_\W(M,N))$, which is again symplectic. See \cite{me:eul} for a discussion of Weinstein's isotropic embedding theorem from this perspective. 
	

\end{example}

\subsubsection{Alternative description of weighted normal bundle}\label{subsec:homogeneous}
The weighting also determines a negatively graded Lie algebra bundle
\[ \k=\bigoplus_{i=1}^r \k^{-i}\] where $\k^{-i}$ has $\on{gr}(\mf{X}_M)^{-i}$
as its sheaf of sections, using the filtration \eqref{eq:vf_filtration}. From the filtered subsheaf  $\mf{X}_M^N\subset \mf{X}_M$ of vector fields that are tangent to $N$, we obtain a 
graded Lie subalgebra bundle
\[ \mf{l}=\bigoplus_{i=1}^r \mf{l}^{-i},\]
where  $\mf{l}^{-i}$ has $\on{gr}(\mf{X}_M^N)^{-i}$
as its sheaf of sections. The quotient bundle $\k/\mf{l}$ is identified with $\on{gr}(\nu(M,N))=\bigoplus F_{-i}/F_{-i+1}$, 
using the filtration \eqref{eq:filtration} of $\nu(M,N)$. 

\begin{remark}
In \cite{loi:wei} we defined $\mf{l}$ in terms of the subsheaf of vector fields 
that vanish along $N$, rather than those which are tangent to $N$. 
To see that the definitions are equivalent, note that $\mf{X}_{M,(0)}^N=\mf{X}_{M,(0)}$ since infinitesimal automorphisms of the weighting are tangent to $N$,  and that the restriction map $ \mf{X}_{M,(0)}\to \mf{X}_{N}$ is surjective. This implies that 
\[ \mf{X}_{M,(-i)}^N=\mf{X}_{M,(0)}^N+\I\mf{X}_M\cap \mf{X}_{M,(-i)},\ \ \ i\ge 0,\] and hence 
$\on{gr}(\mf{X}_M^N)^{-i}=\on{gr}(\I\mf{X}_M)^{-i}$
for all $i>0$.
\end{remark}
\begin{remark}
 In local weighted coordinates $x_a$ on $U\subset M$, the space of sections of $\k^{-i}|_{U\cap N}$ is spanned by vector fields $x^s\f{\p}{\p x_a}$ with $w_a>0$, ranging over multi-indices 
with $s_b=0$ for $w_b=0$ and with $w\cdot s-w_a=-i$. The subspace of sections of 
$\mf{l}^{-i}|_{U\cap N}$ is given by the additional condition that $s\neq 0$; hence the quotient $\k^{-i}/\mf{l}^{-i}$ is spanned by 
coordinate vector fields $\f{\p}{\p x_a}$ with $w_a=i$. Note in particular that $\mf{l}^{-r}=0$. 
\end{remark}

Since the monoid action of $(\R,\cdot)$ on $\k$ preserves brackets, it exponentiates to  a monoid  action on the nilpotent Lie group bundle $K\to N$ integrating $\k$; thus $K$ is an example of a  \emph{graded Lie group bundle}. 
Similarly $\mf{l}$ integrates to a graded Lie subgroup bundle $L\subset K$. One obtains the following 
description of the weighted normal bundle as a graded bundle of homogeneous spaces, 
\begin{equation}\label{eq:quot} \nu_\W(M,N)=K/L,\ \ \ \nu_\W(M,N)_\lin=\k/\mf{l}.\end{equation}
See \cite[Proposition 7.7]{loi:wei}. For the case of a trivial weighting ($r=1$), we directly have $\k=\nu(M,N)$ (with zero bracket) and $\mf{l}=0$, hence $K/L=K=\nu(M,N)$.

\section{Singular Lie filtrations}
In this section, we unify the concept of Lie filtrations with the concept of a singular foliation. These \emph{singular Lie filtrations}, to be discussed below,  appear in the work of Androulidakis, Mohsen,  Yuncken, and van Erp 
	on hypo-elliptic operators \cite{and:conv,and:inprep}. 
 As we shall see, they  provide a rich source of examples of weightings. 

\subsection{Singular distributions}\label{subsec:sing}
A  \emph{distribution} on a manifold $M$ is a subbundle $D\subset TM$ of the tangent bundle. It is called Frobenius integrable if its space of sections is closed under Lie bracket. In the Stefan-Sussman theory of singular foliations, one considers more general families of subspaces $D_m\subset T_mM$ which are not necessarily of constant rank. Following work of Androulidakis-Skandalis \cite{and:hol}, it was found to be more useful to work with the sheaf $\mf{X}_M$ of vector fields, regarded as a sheaf of $C^\infty_M$-modules.   The formulation in \cite{and:hol} is in terms of vector fields of compact support; the equivalence with the sheaf-theoretic formulation is discussed in \cite{and:ste}.\smallskip

\begin{definition} Let $M$ be a manifold. 
	\begin{enumerate}
		\item 
			A \emph{singular distribution on $M$} is a sheaf of $C^\infty_M$-submodules $\D\subset \mf{X}_M$ that is \emph{locally finitely generated}. That is, every point in $M$ admits an open neighborhood $U$ such that $\D(U)$ is finitely generated as a 
		$C^\infty(U)$-module. 
		\item 
			A \emph{singular foliation on $M$} is a singular distribution $\D$ that is \emph{involutive}: $[\D,\D]\subseteq \D$. 
		\item 
		If $\D$ is the sheaf of sections of a subbundle $D\subset TM$, we speak of a \emph{regular distribution}. It is called a
		\emph{regular foliation}  if $\D$ is involutive. 	
	\end{enumerate}
\end{definition}
\smallskip

The sheaf formulation entails a gluing property: If a vector field $X\in\mf{X}(U)$ is such that every point of $U$ has an open neighborhood $U'\subset U$ with $X|_{U'}\in \D(U')$, then $X\in \D(U)$.  

\begin{remark}Given a singular foliation, one obtains a decomposition of 
$M$ into leaves, such that $\D$ spans the tangent spaces to the leaves. However, the submodule $\D$ contains more information, in general, than the decomposition into leaves. (For example \cite{and:hol}, the vector fields $x^2\f{\p}{\p x}$ and 
$x\f{\p}{\p x}$ span different submodules of $\mf{X}_\R$, but yield the same decomposition into leaves.) 
\end{remark}

%
Here are some simple constructions with singular distributions. 
First, we note that if $M',M''$ are equipped with singular distributions $\D',\D''$, then the direct product $M'\times M''$ inherits a 
product distribution 
$\D'\times \D''\subset \mf{X}_{M'\times M''}$ 
defined by 
\[ (\D'\times \D'')(U'\times U'')=C^\infty(U'\times U'')\cdot (\D'(U')\oplus \D''(U'')).\]
If $\D',\D''$ are involutive then so is their product. 
Next, consider the restriction of singular distributions to   embedded submanifolds 
	$N\subset  M$. 
	\begin{definition}
		Let  $\D\subset \mf{X}_M$ be a singular distribution. 
		We say that $N\subset M$ is
		\begin{enumerate}
			\item   \emph{$\D$-transverse} if $ T_mN+ \D|_m=T_mM$ for all $m\in N$,
			\item   \emph{$\D$-invariant} if $\D|_m\subset T_mN$  for all $m\in N$,	
			\item 	 \emph{$\D$-clean}  if  $ \dim(T_mN+ \D|_m)$ is constant, as a function of $m\in N$. 
		\end{enumerate}		
	\end{definition}
Clearly, properties (a),(b) are special cases of property (c). 
\begin{remark}
	If $\D$ is a regular distribution, given as the sheaf of sections of  a subbundle $D\subset TM$, the cleanness condition is equivalent to $D|_N\cap TN$ being a subbundle of $TN$.  
\end{remark}

Suppose  $\D\subset \mf{X}_M$ is a singular distribution, and $N\subset M$ is 
$\D$-clean. Then we obtain a subbundle $\wt{F}\subset TM|_N$ with fibers
\[ \wt{F}_m=T_mN+\D|_m,\]
and a corresponding subbundle of the normal bundle 
\begin{equation}\label{eq:f} F=\wt{F}/TN\subset \nu(M,N)=TM|_N/TN.\end{equation}
The $\D$-transverse and $\D$-invariant cases are the special cases for which $F$ is the full normal bundle or the zero bundle, respectively. Let $\D^N=\D\cap \mf{X}_M^N$ the subsheaf tangent to $N$, and denote by 
\[ \si^!\D\subset \mf{X}_N\] 
its image under restriction $\mf{X}_M^N\to \mf{X}_N$. 
Explicitly, for $U\subset M$ open, 
\[ (\si^!\D)(U\cap N)=\{X|_{U\cap N}|\ X\in \D(U) \mbox{ is tangent to }U\cap N\}.\]
	
	\begin{lemma}\label{lem:clean}
		Let $M$ be a manifold with a singular distribution $\D\subset \mf{X}_M$. 
		If $\si\colon N\to M$ is $\D$-clean, then $\si^!\D\subset \mf{X}_N$ is again
		a singular distribution on $N$. If $\D$ is a singular foliation on $M$, then $\si^!\D$ is a singular foliation on $N$. 
	\end{lemma}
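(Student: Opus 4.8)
The plan is to work locally near a point $m\in N$ and produce, from a finite generating set of $\D$, a finite generating set of $\si^!\D$. First I would choose an open $U\ni m$ on which $\D(U)$ is finitely generated, say by vector fields $X_1,\ldots,X_k$. The cleanness hypothesis says that $\dim(T_pN+\D|_p)$ is constant for $p\in N$; shrinking $U$, I may assume $\wt F = TN + \D|_{N\cap U}$ is a subbundle of $TM|_{N\cap U}$ of constant rank, and that $\wt F\cap TN = \wt F$ if and only if $\D|_p\subset T_pN$ (the $\D$-invariant case), which I would treat as the easy sub-case first. In general, after reindexing and taking $C^\infty(U\cap N)$-linear combinations, I can arrange that the restrictions $X_1|_{N\cap U},\ldots,X_\ell|_{N\cap U}$ span a complement to $TN$ inside $\wt F$ at every point of $N\cap U$ (here $\ell = \mathrm{rank}(\wt F) - \dim N + \dim(\wt F\cap TN)$; the key point is that this number is locally constant, which is exactly cleanness), while $X_{\ell+1},\ldots,X_k$ have restrictions lying in $TN$ at every point of $N\cap U$ — but only \emph{pointwise}, not as honest vector fields tangent to $N$.

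The technical heart is therefore: a vector field $X\in\D(U)$ with $X|_p\in T_pN$ for all $p\in N\cap U$ need not itself be tangent to $N\cap U$, so $X$ need not lie in $\D^N$. The remedy is to split off the part of $X$ normal to $N$ using the spanning sections from the first step. Concretely, choose submanifold coordinates for $N$ on $U$; for $X_{\ell+1},\ldots,X_k$, I would show that one can subtract $C^\infty(U)$-linear combinations of $X_1,\ldots,X_\ell$ (which contribute the normal directions) to produce vector fields $Y_{\ell+1},\ldots,Y_k\in\D(U)$ that are genuinely tangent to $N\cap U$ and whose restrictions agree with $X_{\ell+1}|_{N\cap U},\ldots,X_k|_{N\cap U}$. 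This uses that the coefficients of $X_j$ in the normal coordinate directions vanish on $N$, hence lie in the ideal $\I(U\cap N)$, and can be rewritten, via the spanning property of $X_1|_{N\cap U},\ldots,X_\ell|_{N\cap U}$ in $\wt F/TN$, as $\I(U\cap N)$-combinations realizing precisely those normal components — a Hadamard-type division argument. Then $Y_j\in\D^N(U)$ and $(\si^!\D)(U\cap N)$ is generated over $C^\infty(U\cap N)$ by $X_1|_{N\cap U},\ldots,X_\ell|_{N\cap U}$ together with $Y_{\ell+1}|_{N\cap U},\ldots,Y_k|_{N\cap U}$; in particular it is finitely generated, and the gluing property of sheaves upgrades this to the statement that $\si^!\D$ is a singular distribution on $N$.

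For the foliation claim, suppose in addition $[\D,\D]\subset\D$. I would show directly that $\D^N=\D\cap\mf X_M^N$ is involutive: both $\D$ and $\mf X_M^N$ are closed under bracket (the latter because the vector fields tangent to a submanifold form a Lie subalgebra), so their intersection is too. Restriction $\mf X_M^N\to\mf X_N$ is a Lie algebra homomorphism (it is the map induced on vector fields by the inclusion $N\hookrightarrow M$), so the image $\si^!\D$ of the involutive submodule $\D^N$ is again involutive in $\mf X_N$. Combined with the finite-generation just established, $\si^!\D$ is a singular foliation on $N$.

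\textbf{Main obstacle.} The delicate step is the Hadamard division in paragraph two: passing from the pointwise condition ``$X|_p\in T_pN$ for all $p\in N$'' to the existence of a genuinely $N$-tangent representative in $\D$ with the same restriction. This is exactly where cleanness is used in an essential way — without the constancy of $\dim(T_pN+\D|_p)$ the sections $X_1|_{N\cap U},\ldots,X_\ell|_{N\cap U}$ would fail to span a bundle complement to $TN$ in $\wt F$ uniformly, and the normal components of the $X_j$ could not be absorbed smoothly. I expect the rest (sheaf gluing, the involutivity bookkeeping) to be routine.
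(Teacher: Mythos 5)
Your construction follows essentially the same route as the paper's proof: cover $N$ by open sets $U$ on which $\D(U)$ is finitely generated, single out generators whose restrictions to $N$ span a complement to $T_mN\cap \D|_m$ in $\D|_m$ (equivalently, to $T_mN$ in $\wt F_m$), correct the remaining generators by subtracting $C^\infty(U)$-combinations of these so that they become tangent to $N$, and obtain involutivity from the fact that restriction of $N$-tangent vector fields is compatible with brackets. The paper asserts the existence of such a generating set without detail, and your second paragraph supplies exactly the extension argument that justifies it, with cleanness entering where you say it does. Two corrections are needed, however. First, your framing of the ``technical heart'' rests on a false distinction: for a vector field, ``tangent to $N$'' \emph{means} $X|_p\in T_pN$ for all $p\in N$ (this is how $\mf{X}_M^N$ is defined), so there is no gap between the pointwise condition and honest tangency. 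The genuine issue, which your argument does in fact address, is that the pointwise-corrected sections are a priori only $C^\infty(U\cap N)$-combinations defined along $N$, and must be realized as restrictions of actual elements of $\D(U)$. Second, the generating set you write down for $(\si^!\D)(U\cap N)$ is wrong as stated: the restrictions $X_1|_{N\cap U},\ldots,X_\ell|_{N\cap U}$ span a complement to $TN$ by construction, hence are not sections of $TN$ and cannot be generators of a submodule of $\mf{X}_N$; only $Y_{\ell+1}|_{N\cap U},\ldots,Y_k|_{N\cap U}$ should appear. (One should then check that these suffice: if $Z\in\D(U)$ is tangent to $N$, write $Z=\sum_i f_iX_i+\sum_j g_jY_j$ and restrict to $N$; since the $X_i|_p$ are independent modulo $T_pN$, the $f_i$ vanish on $N$, so $Z|_{N\cap U}$ lies in the $C^\infty(U\cap N)$-span of the $Y_j|_{N\cap U}$.) Your formula for $\ell$ is also garbled --- since $TN\subset \wt F$ it should read $\ell=\on{rank}(\wt F)-\dim N$. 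With these repairs the proof is correct and coincides with the paper's.
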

	\begin{proof}
		The clean intersection condition  is equivalent to 
		\[ q=\dim(\D|_m)-\dim  ( T_mN\cap \D|_m)\] 
		being constant as a function of $m\in N$. Hence, we may cover $N$ by  open subsets $U\subset M$ 
		such that $\D(U)$ is generated by $X_1,\ldots,X_p,Y_1,\ldots,Y_q$, where the $Y_i$'s are linearly independent 
		vector fields spanning 	a complement to $TN|_m\cap \D|_m$ in $\D|_m$ at points $m\in U\cap N$, while 
		$X_1,\ldots,X_p$ are tangent to $N$. The restrictions of $X_i$'s  
		to $U\cap N$ are then generators of $(\si^!\D)(U\cap N)$. 
		The last claim follows since relatedness of vector fields with respect to smooth maps is compatible with Lie brackets. 
	\end{proof}
	Restriction to submanifolds can be iterated: suppose $N'\subset N\subset M$ are nested submanifolds, with inclusions denoted \[ \si\colon N\to M, \ \ \ 	\sj\colon N'\to N,\ \ \ \si'=\si\circ j\colon N'\to M.\] 
	\begin{lemma}
	If $\D$ is a singular distribution, and $\si$ is $\D$-clean, then 
	$\sj$ is $\si^!\D$-clean  if and only if $\si'$ is $\D$-clean,  and in this case 
	\begin{equation}\label{eq:nested} \sj^!\si^!\D=(\si')^!\D.\end{equation} 
   \end{lemma}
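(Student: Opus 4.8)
The plan is to work locally and reduce everything to linear-algebraic statements about the fibers of $\D$ over points of $N'$. Fix a point $m\in N'$. The key quantities are the ranks
\[ d_M(m)=\dim(T_mN+\D|_m), \quad d_N(m)=\dim(T_mN'+(\si^!\D)|_m), \quad d_{M,N'}(m)=\dim(T_mN'+\D|_m), \]
together with the two intersection dimensions $\dim(T_mN\cap\D|_m)$ and $\dim(T_mN'\cap\D|_m)$. The first observation I would record is the identification $(\si^!\D)|_m = (\D|_m\cap T_mN)/\!\sim$ in the sense that it is the image of $\D|_m\cap T_mN$ in $T_mN$ under the evident inclusion — more precisely, from the proof of Lemma~\ref{lem:clean} one has, in suitable local generators $X_1,\dots,X_p,Y_1,\dots,Y_q$ of $\D(U)$ with the $X_i$ tangent to $N$, that $(\si^!\D)|_m$ is spanned by the values $X_i|_m$. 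Since the $Y_j|_m$ together with $T_mN$ span $\wt F_m$ and the $X_i|_m$ lie in $T_mN$, one gets $(\si^!\D)|_m = \D|_m\cap T_mN$ whenever $\si$ is $\D$-clean near $m$ (the cleanness being exactly what makes the $Y_j$ a genuine smooth complement, so that no "accidental" tangency contributes). I would make this pointwise identity precise as the first step, valid over the open set where $\si$ is $\D$-clean, hence in particular at all points of $N'$.

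With that identity in hand, the heart of the argument is the elementary dimension count. Inside $T_mM$ we have the three subspaces $T_mN'\subseteq T_mN$ and $\D|_m$. Computing inside $T_mN$ with the subspace $T_mN'$ and the subspace $(\si^!\D)|_m = \D|_m\cap T_mN$, the dimension of $T_mN'+(\si^!\D)|_m$ is $\dim T_mN' + \dim(\D|_m\cap T_mN) - \dim(T_mN'\cap\D|_m\cap T_mN)$. Since $N'\subseteq N$, we have $T_mN'\cap\D|_m\cap T_mN = T_mN'\cap\D|_m$, so this reads $d_N(m) = \dim T_mN' + \dim(\D|_m\cap T_mN) - \dim(T_mN'\cap\D|_m)$. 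On the other hand, directly in $T_mM$, $d_{M,N'}(m) = \dim T_mN' + \dim\D|_m - \dim(T_mN'\cap\D|_m)$. Subtracting, $d_{M,N'}(m) - d_N(m) = \dim\D|_m - \dim(\D|_m\cap T_mN)$, which by $\D$-cleanness of $\si$ equals the constant $q$ defined in the proof of Lemma~\ref{lem:clean} — indeed $\dim\D|_m - \dim(\D|_m\cap T_mN) = d_M(m) - \dim T_mN$, constant over $N$ by assumption. Therefore $d_{M,N'}(\cdot)$ is constant on $N'$ if and only if $d_N(\cdot)$ is constant on $N'$; the former says $\si'$ is $\D$-clean, the latter says $\sj$ is $\si^!\D$-clean. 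This proves the equivalence.

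For the identity \eqref{eq:nested}, I would argue again locally, assuming the (now equivalent) cleanness hypotheses and choosing generators adapted to the full nested situation: cover $N'$ by open $U\subset M$ on which $\D(U)$ is generated by vector fields $Z_1,\dots,Z_a$ tangent to $N'$ (hence to $N$), together with further vector fields tangent to $N$ but not to $N'$, together with vector fields not tangent to $N$; cleanness of $\si'$ and of $\sj$ guarantees such a triangular choice can be made with the correct constant counts. Then $(\si')^!\D(U\cap N')$ is generated by the restrictions $Z_i|_{U\cap N'}$. On the other hand $(\si^!\D)(U\cap N)$ is generated by the restrictions to $U\cap N$ of the vector fields in $\D(U)$ tangent to $N$ — that is, by the $Z_i|_{U\cap N}$ and the restrictions of the middle block; of these, precisely the $Z_i|_{U\cap N}$ are tangent to $N'$, and their further restriction to $U\cap N'$ generates $(\sj^!\si^!\D)(U\cap N')$. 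Both sides are thus generated by $\{Z_i|_{U\cap N'}\}$, giving \eqref{eq:nested}; a gluing argument using the sheaf property extends this from the cover to all of $N'$.

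I expect the main obstacle to be the careful verification that the adapted local generators exist with all the requisite constant ranks simultaneously — i.e., that one can choose generators of $\D(U)$ respecting the flag $T_mN'\subseteq T_mN\subseteq T_mM$ on the nose. The subtlety is that cleanness of $\si'$ controls $\dim(\D|_m\cap T_mN')$ and cleanness of $\si$ controls $\dim(\D|_m\cap T_mN)$, but one must check these two conditions together force $\dim(\D|_m\cap T_mN) - \dim(\D|_m\cap T_mN')$ to be locally constant on $N'$ as well, which is what allows the "middle block" of generators to be chosen smoothly; this follows from the dimension bookkeeping above but should be spelled out. Once the generators are in place, everything else is the routine generator-chasing indicated.
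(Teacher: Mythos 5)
Your proof is correct and follows essentially the same route as the paper's: both rest on the pointwise identity $(\si^!\D)|_m=T_mN\cap \D|_m$ and the observation that $\dim(T_mN'+\D|_m)$ and $\dim(T_mN'+(\si^!\D)|_m)$ differ by the constant $\dim\D|_m-\dim(T_mN\cap\D|_m)$, while the identity \eqref{eq:nested} is checked via adapted local generators (the paper dismisses this last step as a set-theoretic verification, which you spell out in more detail).
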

\begin{proof}
Suppose $\si$ is $\D$-clean, so that $\dim(\D|_m)-\dim  ( T_mN\cap \D|_m)$ is constant as a function of $m\in N$.
Then $\si^!\D$ is defined, with $(\si^!\D)|_m=T_mN\cap \D|_m$. For $m\in N'$, we have 
\[\dim((\si^!\D)|_m)-\dim (T_mN'\cap (\si^!\D)|_m)
=\dim(T_mN\cap \D|_m )-\dim (T_mN'\cap  \D|_m ),\]
which is constant as a function of $m\in N'$ if and only if $\dim(\D|_m)-\dim (T_mN'\cap  \D|_m )$ 
 is constant as a function of $m\in N'$. This proves the first claim, and \eqref{eq:nested} is a set-theoretic verification. 
\end{proof}

	\smallskip

	We generalize the `restriction to submanifolds' from Lemma \ref{lem:clean} to a pullback operation  under more general maps $\varphi\colon {M'}\to M$, by the usual trick of replacing the map with its graph
	\[ \on{gr}(\varphi)=\{	(\varphi(x),x)|\ x\in {M'}\}\subset M\times {M'}.\]
	Identify $\on{gr}(\varphi)\cong {M'}$ under projection to the second factor, and 
	denote the inclusion map by $\si_{\gr(\varphi)}\colon M'\cong \on{gr}(\varphi)\to M\times {M'}$.
	
\begin{definition}Let $M$ be a manifold with a  singular distribution $\D\subset \mf{X}_M$. 
	We say that $\varphi\colon {M'}\to M$ is  \emph{$\D$-clean} (resp., \emph{$\D$-transverse})  if the dimension of 
	\[ \on{ran}(T_x\varphi)+\D|_{\varphi(x)},\ \ x\in M'\]
	is constant (resp., if $ \on{ran}(T_x\varphi)+\D|_{\varphi(x)}=T_{\varphi(x)}M$). 
\end{definition}

 Note that $\varphi$ is $\D$-clean (resp., transverse)  if and only if $\on{gr}(\varphi)$ is $\D\times \mf{X}_{M'}$-clean (resp., transverse).  We define  
	\[ \varphi^!\D=\si_{\gr(\varphi)}^!(\D\times \mf{X}_{M'}).\]

	\begin{lemma}
		If $\varphi$ is an embedding $\si\colon N\to M$ of a $\D$-clean submanifold, then 
		this definition of pullback agrees with the restriction to $N$, as defined above. 
	\end{lemma}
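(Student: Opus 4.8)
The plan is to establish the identity of subsheaves $\varphi^!\D=\si^!\D$ of $\mf{X}_N$ by a direct local computation, since equality of sheaves is a local question. Fix $v_0\in N$, set $m_0=\si(v_0)$, and choose submanifold coordinates $(x,y)=(x_1,\ldots,x_k,y_1,\ldots,y_{n-k})$ (with $k=\dim N$, $n=\dim M$) on a small open $U\ni m_0$ with $\si(N)\cap U=\{y=0\}$, shrunk so that $V:=\si^{-1}(U)$ is a single chart with coordinates $x'=(x'_1,\ldots,x'_k)$ and $\si$ reads $x'\mapsto(x',0)$. Then $\gr(\si)\cap(U\times V)=\{x=x',\ y=0\}$, its tangent space at the point $((x',0),x')$ is spanned by the $\p_{x_a}+\p_{x'_a}$, and, writing $\iota=\si_{\gr(\si)}$, we have $T\iota(\p_{x'_a})=\p_{x_a}+\p_{x'_a}$; thus $\iota$ intertwines vector fields on $V$ with the corresponding vector fields tangent to $\gr(\si)$. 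Shrinking $U$ once more I may assume $\D(U)$ is generated over $C^\infty(U)$ by finitely many vector fields $X_i=\sum_a X_i^a\,\p_{x_a}+\sum_b X_i^b\,\p_{y_b}$. (Recall also that $\gr(\si)$ is $\D\times\mf{X}_N$-clean because $\si$ is $\D$-clean, so that $\varphi^!\D=\iota^!(\D\times\mf{X}_N)$ and $\si^!\D$ are both honest singular distributions; this is the role of the hypothesis.)

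For the inclusion $\si^!\D\subseteq\varphi^!\D$: if $X\in\D(U)$ is tangent to $\si(N)$, so $X^b(x,0)=0$ for all $b$, then the vector field $W:=(X,0)+(0,\si^*X)$ on $U\times V$ — where $(\cdot,\cdot)$ denotes a pair of vector fields on the two factors and $\si^*X=\sum_a X^a(x',0)\,\p_{x'_a}$ — lies in $(\D\times\mf{X}_N)(U\times V)$, has value $\sum_a X^a(x',0)(\p_{x_a}+\p_{x'_a})$ at each point $((x',0),x')$ of $\gr(\si)$, hence is tangent to $\gr(\si)$, and $\iota^*W=\si^*X$. Since such $X$ generate $\si^!\D(V)$, this proves the inclusion; note it uses neither cleanness nor finite generation.

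For the reverse inclusion $\varphi^!\D\subseteq\si^!\D$: a section of $\varphi^!\D$ over $V$ is $\iota^*W$ for some $W\in(\D\times\mf{X}_N)(U\times V)$ tangent to $\gr(\si)$. Write $W=\sum_i f_i\,(X_i,0)+\sum_c g_c\,(0,\p_{x'_c})$ with $f_i,g_c\in C^\infty(U\times V)$. Evaluating at $((x',0),x')$ and imposing tangency to $\gr(\si)$ yields exactly the identities
\[\sum_i f_i(x',0,x')\,X_i^b(x',0)=0\quad(\forall\,b),\qquad g_a(x',0,x')=\sum_i f_i(x',0,x')\,X_i^a(x',0)\quad(\forall\,a),\]
while $\iota^*W=\sum_a\big(\sum_i f_i(x',0,x')X_i^a(x',0)\big)\p_{x'_a}$. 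Now perform the diagonal substitution $x':=x$ on all of $U$ (legitimate after shrinking $U$): set $\tilde f_i(x,y):=f_i(x,y,x)\in C^\infty(U)$ and $X:=\sum_i\tilde f_i X_i\in\D(U)$. The first family of identities gives $X^b(x,0)=0$, so $X$ is tangent to $\si(N)$, and the second gives $\si^*X=\iota^*W$; hence $\iota^*W\in\si^!\D(V)$.

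The remaining verifications are routine and I will only sketch them: existence of the coordinates, the fact that $\gr(\si)\cap(U\times V)$ is $\iota$ of an open subset of $N$, and the unwinding of tangency of $W$ to $\gr(\si)$ into the two displayed identities. The one point deserving genuine care — and the only real obstacle — is the diagonal substitution in the reverse inclusion: one must not try to ``restrict the $f_i$ to $\si(N)$'' (a function only defined along a submanifold) but instead substitute $x'=x$ to produce a vector field on the open set $U$, and then check that both tangency conditions survive this substitution, yielding an element of $\D(U)$ tangent to $N$ with the correct restriction.
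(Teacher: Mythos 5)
Your proof is correct, but it follows a different route from the paper's. The paper recycles the structural input of Lemma \ref{lem:clean}: cleanness of $\si$ provides local generators $X_1,\ldots,X_p,Y_1,\ldots,Y_q$ of $\D(U)$ with the $X_i$ tangent to $N$ and the $Y_j$ pointwise independent transverse to $T N\cap \D$; one then checks that $(X_i,X_i|_{U\cap N})$, $(Y_j,0)$, $(0,Z_k)$ form an analogously adapted generating set for $(\D\times\mf{X}_N)(U\times(U\cap N))$ relative to $\gr(\si)$, so that the pullback along the graph is generated by the restrictions of the $(X_i,X_i|_{U\cap N})$, i.e.\ by the $X_i|_{U\cap N}$. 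You instead argue in submanifold coordinates with an arbitrary (not adapted) finite generating set, and handle an arbitrary section $W$ of the product module tangent to the graph directly; the one nontrivial device is the diagonal substitution $\tilde f_i(x,y)=f_i(x,y,x)$, which promotes the coefficients of $W$ along the graph to genuine functions on $U$ and produces an element of $\D(U)$ tangent to $N$ with the right restriction. What your approach buys is that the equality of modules $\varphi^!\D=\si^!\D$ is seen to hold without invoking cleanness at all (cleanness only guarantees that both sides are locally finitely generated, hence singular distributions), at the cost of a longer coordinate computation; the paper's argument is shorter because it leans on the generator decomposition that cleanness makes available. Your bookkeeping (tangency conditions, identification $T\iota(\p_{x'_a})=\p_{x_a}+\p_{x'_a}$, the need to shrink $U$ to a product-type chart so that $(x,0)\in U$ whenever $(x,y)\in U$) is all consistent, and I see no gap.
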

	\begin{proof}
		As in the proof of Lemma \ref{lem:clean}, $N$ may be covered by open subsets $U$ such that there are generators 
		$X_1,\ldots,X_p,Y_1,\ldots,Y_q\in \D(U)$, with the $X_i$'s tangent to $N$. 
		The sets of the form $U\times (U\cap N)\subset M\times N$ cover the graph $\gr(\si)$. Letting $Z_1,\ldots,Z_r\in \mf{X}(U\cap N)$ be generators for the module of vector fields on $U\cap N$, the vector fields 
		of the form $(X_i,X_i|_{U\cap N})$, $(Y_j,0)$ and $(0,Z_k)$ are generators of $(\D\times \mf{X}_N)
		( U\times (U\cap N))$, with the 
		$ (X_i,X_i|_{U\cap N})$'s tangent to $\gr(\si)$. The restrictions 
		$ (X_i,X_i|_{U\cap N})|_{\gr(\si)}$
		are just $X_i|_{U\cap N}$, under the  identification  $\gr(\si)\cong N$. 
	\end{proof}
	
	As a special case, the clean intersection condition is automatic when $\varphi\colon M'\to M$ is a submersion. In this case, $\varphi^!\D$ is locally generated by vector fields on $M'$ that are $\varphi$-related to vector fields on $M$. In particular, if $\varphi$ is a local diffeomorphism, then $\varphi^!\D$ is the obvious pullback. Let us finally remark that the pullback operation is well-behaved under composition: 
	\begin{lemma}
		Let $\D$ be a singular distribution on $M$, and suppose the smooth map $\varphi\colon M'\to M$ is $\D$-clean. 
		Let 
		$\psi\colon M''\to M'$ be another smooth map.  Then $\psi$ is  $\varphi^!\D$-clean 
		 if and only if $\varphi\circ \psi$ is $\D$-clean, and in this case $(\varphi\circ \psi)^!\D=\psi^!\varphi^!\D$.
	\end{lemma}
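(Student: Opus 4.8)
The plan is to realize all three maps inside a single product and then invoke the nested submanifold lemma. Set $P=M\times M'\times M''$, equip it with the product distribution $\E=\D\times\mf{X}_{M'}\times\mf{X}_{M''}$, and consider the iterated graph $Z=\{(\varphi(\psi(z)),\psi(z),z)\colon z\in M''\}\subset P$, which projection to the last factor identifies with $M''$. The key observation is that $Z$ lies inside two intermediate submanifolds of $P$: on the one hand $Z\subset M\times\on{gr}(\psi)$, and on the other hand $Z\subset\on{gr}(\varphi)\times M''$. I will compute the restriction of $\E$ to $Z$ through each of these and show that the first computation yields $(\varphi\circ\psi)^!\D$ while the second yields $\psi^!\varphi^!\D$; since each computation simultaneously tells us when $Z$ is $\E$-clean, comparing the two chains gives both the equivalence of the cleanness hypotheses and the equality of the two pullbacks.

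Along the way I will use a product-restriction compatibility: if $N_i\subset M_i$ is $\D_i$-clean for $i=1,2$, then $N_1\times N_2\subset M_1\times M_2$ is $(\D_1\times\D_2)$-clean with restriction $(\si_{N_1}^!\D_1)\times(\si_{N_2}^!\D_2)$. The cleanness part is an immediate dimension count from the fibrewise description of $\E$, and the identification of the restricted submodule is obtained by choosing adapted generators on each factor exactly as in the proof of Lemma \ref{lem:clean}. I also use the evident facts that $\mf{X}_{M'}\times\mf{X}_{M''}=\mf{X}_{M'\times M''}$ and that restricting the full tangent sheaf of a manifold to a submanifold returns the full tangent sheaf of that submanifold; hence the restriction of $\E$ to $M\times\on{gr}(\psi)\cong M\times M''$ is $\D\times\mf{X}_{M''}$, while $\D$-cleanness of $\varphi$ (equivalently, $(\D\times\mf{X}_{M'})$-cleanness of $\on{gr}(\varphi)$) shows the restriction of $\E$ to $\on{gr}(\varphi)\times M''\cong M'\times M''$ is $\varphi^!\D\times\mf{X}_{M''}$.

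For the first chain, under the identification $M\times\on{gr}(\psi)\cong M\times M''$ the submanifold $Z$ becomes $\on{gr}(\varphi\circ\psi)\subset M\times M''$, so the nested submanifold lemma applied to $Z\subset M\times\on{gr}(\psi)\subset P$ shows: the inclusion $Z\to P$ is $\E$-clean iff $\on{gr}(\varphi\circ\psi)\to M\times M''$ is $(\D\times\mf{X}_{M''})$-clean, i.e. iff $\varphi\circ\psi$ is $\D$-clean; and in that case the restriction of $\E$ to $Z$ equals the restriction of $\D\times\mf{X}_{M''}$ to $\on{gr}(\varphi\circ\psi)$, which is $(\varphi\circ\psi)^!\D$ by definition (using that pullback along an embedding agrees with restriction). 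For the second chain, under $\on{gr}(\varphi)\times M''\cong M'\times M''$ the submanifold $Z$ becomes $\on{gr}(\psi)\subset M'\times M''$, so the nested submanifold lemma applied to $Z\subset\on{gr}(\varphi)\times M''\subset P$ shows: the inclusion $Z\to P$ is $\E$-clean iff $\on{gr}(\psi)$ is $(\varphi^!\D\times\mf{X}_{M''})$-clean, i.e. iff $\psi$ is $\varphi^!\D$-clean; and then the restriction of $\E$ to $Z$ is $\psi^!\varphi^!\D$. Combining, the three statements ``$\varphi\circ\psi$ is $\D$-clean'', ``$Z\to P$ is $\E$-clean'', and ``$\psi$ is $\varphi^!\D$-clean'' are equivalent, and when they hold $(\varphi\circ\psi)^!\D$ and $\psi^!\varphi^!\D$ both equal the restriction of $\E$ to $Z$, hence each other.

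The only nonroutine point is the product-restriction compatibility: the cleanness half is a one-line computation, but pinning down the restricted submodule requires checking that a section of $\D_1\times\D_2$ tangent to $N_1\times N_2$ is, locally, a combination with smooth coefficients of products of the adapted generators on the two factors — the same bookkeeping as in the proof of Lemma \ref{lem:clean}, now on a product. I would record this as a brief preliminary remark rather than reprove it in full. Keeping track of the three identifications of $Z$ with $M''$ causes no trouble, since all three are the single projection of $P$ onto its last factor, so the distributions produced by the two chains live on the same manifold with the same smooth structure.
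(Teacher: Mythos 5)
Your proposal is correct and follows essentially the same route as the paper: both realize the composite as the embedding $z\mapsto(\varphi(\psi(z)),\psi(z),z)$ into $M\times M'\times M''$, factor it through the two intermediate submanifolds $M\times\on{gr}(\psi)$ and $\on{gr}(\varphi)\times M''$, and apply the nested-submanifold lemma \eqref{eq:nested} to each chain. Your explicit product-restriction compatibility remark merely spells out a step the paper's computation of $(\on{id}_M\times\psi_1)^!$ and $(\varphi_1\times\on{id}_{M''})^!$ takes for granted.
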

	\begin{proof}
		The embeddings of graphs $\psi_1=\si_{\gr(\psi)},\ \varphi_1=\si_{\gr(\varphi)},\ (\varphi\circ\psi)_1=\si_{\gr(\varphi\circ \psi)}$ 
		fit into a commutative diagram 
		\[ \xymatrix{M'' \ar[r]^{(\varphi\circ\psi)_1} \ar[d]_{\psi_1} & M\times M'' \ar[d]^{\on{id}_M\times \psi_1}\\
			M'\times M''\ar[r]_{\varphi_1\times \on{id}_{M''}} & M\times M'\times M''	
		}
		\] 
		The composed map is the embedding 
		\[ (\varphi\circ \psi)\times \psi\times \on{id}_{M''}\colon M''\to M\times M'\times M''.\]
		We have 
		\[(\varphi\circ \psi)_1^! (\on{id}_M\times \psi_1)^!(\D\times \mf{X}_{M'}\times \mf{X}_{M''})=(\varphi\circ \psi)_1^!(\D\times \mf{X}_{M''})
		=(\varphi\circ \psi)^!\D,\]
		and similarly 
		\[ 	\psi_1^!(\varphi_1\times \on{id}_{M''})^!(\D\times \mf{X}_{M'}\times \mf{X}_{M''})
		=\psi_1^!(\varphi^!(\D)\times \mf{X}_{M''})=\psi^!\varphi^!\D.\]
		On the other hand, by \eqref{eq:nested} each of these coincide with 
		\[	\big((\varphi\circ \psi)\times \psi\times \on{id}_{M''}\big)^!(\D\times \mf{X}_{M'}\times \mf{X}_{M''}). \qedhere\]
	\end{proof}

\subsection{Singular Lie filtrations}
The concept of singular Lie filtration generalizes singular foliations, as well as (regular) Lie filtrations.

\begin{definition}
A \emph{singular Lie filtration} of order $r$ is a filtration of the sheaf of vector fields
		\begin{equation}\label{eq:liefiltration} \mf{X}_M=\H_{-r}\supset \H_{-r+1}\supset \cdots \supset \H_0\supset  0,\end{equation}
by singular distributions (i.e., locally finitely generated $C^\infty_M$-submodules)  	such that 
	\[  [\H_{-i},\H_{-j}]\subset \H_{-i-j}\] 
	for all $i,j$. It is called a \emph{(regular) Lie filtration}
	if the $\H_{-i}$ are sheaves of sections of subbundles $H_{-i}\to M$ of the tangent bundle. 
\end{definition}

\begin{remark}
	Note that we are allowing for a non-trivial $\H_0$. 
	The bracket condition then shows that $\H_0$ is involutive, and so defines a singular foliation of $M$. We shall see that leaves of this singular foliation acquire natural weightings. On the other hand, we will construct weightings along more general submanifolds with a `clean intersection' property. Since that construction does not involve the summand $\H_0$, we will put $\H_0=0$ in the next section. 
\end{remark}

\subsection{Examples}
	Regular Lie filtrations have been much studied in recent years as a framework for the theory of  hypo-elliptic operators. See the work of Choi-Ponge \cite{choi:priv2,choi:priv1,choi:tan}, 
	van Erp-Yuncken \cite{erp:tan}, Haj-Higson \cite{hig:eul}, Dave-Haller \cite{dav:bgg,dav:hea},  Mohsen \cite{moh:ind,moh:def}, among others. These references provide many examples; see \cite{choi:priv1} for an overview. 
	The singular Lie filtrations play a similar role for a broader class of hypo-elliptic operators \cite{and:inprep}. Other examples arise,  for instance, in the context of sub-Riemannian geometry.

\begin{example}
	A \emph{Carnot manifold} (also called a Carnot-Carath{\'e}odory manifold) is a manifold $M$ with a subbundle $D\subset TM$, 
	with sheaf of sections $\D\subset \X_M$, such that iterated brackets of $\D$ generate all of $\X_M$. 
	See, e.g., \cite{pan:sub}. One obtains a singular Lie filtration by letting 
	\[ \H_{-1}=\D,\] and inductively 
	\begin{equation}\label{eq:inductive} \H_{-i-1}=\H_{-i}+[\D,\H_{-i}].\end{equation}
	The Carnot manifold is called \emph{equiregular} if this is a regular Lie filtration. An example of an equiregular Carnot manifold is given by 
\[  D=\on{span}\big\{\f{\p}{\p x},\ \f{\p}{\p y}+x\f{\p}{\p z}\big\}\subset T\R^3.\]
	An example of a 
	Carnot manifold that is not equiregular is given by the Martinez Carnot structure
	\[ D=\on{span}\big\{\f{\p}{\p x},\ \f{\p}{\p y}+x^2\f{\p}{\p z}\big\}\subset T\R^3.\]
\end{example}

	\begin{example}
	Every singular distribution $\D\subset \mf{X}_M$ defines a singular Lie filtration of order $r=2$, 
	\[ \mf{X}_M=\H_{-2}\supset \D=\H_{-1}.\]
	If $\D$ is a regular distribution, then this is a regular Lie filtration. 
	More generally, 
	one obtains a singular Lie filtration of order $r$  by putting $\H_{-1}=\D,\ \H_{-r}=\mf{X}_M$, and 
	using 
		\eqref{eq:inductive} for $-i-1\ge -r$. In particular, any finite collection of vector fields defines a singular Lie filtration, by taking $\D$ to be the submodule spanned by them.  
\end{example}

\begin{example}\label{ex:liegroup}
	Let $G$ be a Lie group whose Lie algebra $\g$ has a filtration $\g=\g_{-r}\supset \cdots \supset \g_0$, i.e, 
	$[\g_{-i},\g_{-j}]\subset \g_{-i-j}$. Using left-translation, the filtration of the Lie algebra defines a (right-invariant) regular Lie filtration of $TG=G\times \g$	by subbundles $H_{-i}=G\times \g_{-i}$. If $\g_0$ exponentiates to a closed subgroup 
	$H$, then the foliation defined by $\g_0$ has as its leaves the right-translates of $H$. 
\end{example}

\begin{example}\label{ex:liegroupoid}
Generalizing Example \ref{ex:liegroup}, let $G\rra M$ be a Lie groupoid. Let $s,t\colon G\to M$ be the source, target maps.
Suppose the Lie algebroid of $G$ has a bracket-compatible filtration $\g=\g_{-r}\supset \cdots \supset \g_0$ by subbundles. Let $H_{-i}\subset TG$ be the subbundle spanned by left-invariant vector fields (tangent to $t$-fibers) 
$\xi^L$ for $\xi\in \g_{-i}$. Then $H_0,\ldots,H_{-r}=\ker(T t)$ together with         $H_{-r-1}=TG$ defines a regular Lie filtration
of order $r+1$.
\end{example}

\begin{example}\label{ex:weightinggivesliefiltration}
	Given a weighting of order $r$ along a submanifold $N\subset M$, with the resulting filtration on vector fields, 
	we obtain a singular Lie filtration of order $r$ by truncation. We shall use the special notation 
\[ \K_{-i}=\mf{X}_{M,(-i)},\ \ \ i=0,\ldots,r.\] 
	In local weighted coordinates, the module $\K_{-i}(U)$ is generated by all $x^s\f{\p}{\p x_a}$  such that $s\cdot w\ge w_a -i$. We also have the singular Lie filtration of order $r+1$ by the submodules 	$\K_{-i}^N$ together with 
	$\mf{X}_M$ in degree $-r-1$. 
\end{example}
\medskip

The basic constructions for singular distributions give corresponding constructions for singular Lie filtrations. In particular, 
products $(\H'\times \H'')_\bullet$ of singular distributions are defined by setting $(\H'\times \H'')_{-i}=\H'_{-i}\times \H''_{-i}$. Pullbacks $\varphi^!\H_\bullet$ of singular Lie filtrations under smooth maps $\varphi\colon M'\to M$
are defined provided that  $\varphi$ is \emph{$\H_\bullet$-clean}, that is, it is 
$\H_{-i}$-clean for all $i$. The compatibility with brackets is clear in the case of embeddings, and for general maps $\varphi$ follows by turning the map into an embedding.

In the following section we will show that $\H_\bullet$-clean submanifolds define weightings. 
Here are some examples of such submanifolds. 

\begin{examples}
	Let $M$ be a manifold with a singular Lie filtration $\H_\bullet$. 
\begin{enumerate}
	\item Every point $N=\{m\}$ is  $\H_\bullet$-clean.  
	\item Suppose $\F\subset \mf{X}_M$ is a singular foliation with the property $[\F,\H_{-i}]\subset \H_{-i}$ for all $i$.  Then the leaves $N$ of $\F$ are $\H_\bullet$-clean submanifolds. Indeed, since the vector fields in $\F(U)$ act by infinitesimal automorphisms of $\H_{-i}(U)$, the dimensions of $T_mN+\H_{-i}|_m$ are constant along $N$. 
	\item Given a Lie group action  on $M$, preserving the singular Lie filtration, the cleanness condition holds  
	true for all orbits of the action. 
	\item Let $i$ be the smallest index for which $\H_{-i}\neq 0$. 	
	If $N$ is $\H_{-i}$-transverse, then it is $\H_{-i'}$-transverse for all $i'\ge i$, and in particular is $\H_\bullet$-clean. 
	\item If $\H_\bullet$ is a singular Lie filtration on $M$, then the diagonal $\Delta_M\subset M\times M$ is 
		$\H_\bullet\times \H_\bullet$-clean if and only if $\H_\bullet$ is a regular Lie filtration. 
	\item If $\H_\bullet$ is a regular Lie filtration, given as the sheaves of sections of a sequence of subbundles $H_{-i}\subset TM$, the cleanness condition means that $H_{-i}\cap TN$ are subbundles of $TN$.	
	\item In particular, in Example \ref{ex:liegroupoid} the unit space $M \subset G$ satisfies the cleanness condition.
\end{enumerate}	
\end{examples}

\section{Weightings from singular Lie filtrations}\label{sec:weightingalongleaves}
 Throughout this section, $M$ is a manifold with a singular Lie filtration $\H_\bullet$. Our construction of weightings will not involve $\H_0$, hence we will assume throughout this section that $\H_0=0$: 
\begin{equation}\label{eq:singularlie} \mf{X}_M=\H_{-r}\supset \cdots \supset \H_{-1}\supset 0.\end{equation}

\subsection{Construction of weighting}
Suppose  $N\subset M$ is an $\H_\bullet$-clean closed submanifold. We obtain a filtration 
\begin{equation}\label{eq:filtration_tm} TM|_N=\ti{F}_{-r}\supset \cdots \supset \ti{F}_{0}=TN\end{equation} 
by subbundles $\wt{F}_{-i}$, where 
$\wt{F}_{-i}|_m=T_mN+\H_{-i}|_m$, and a resulting filtration of the normal bundle 
\begin{equation}\label{eq:filtration_nu} \nu(M,N)=F_{-r}\supset \cdots \supset F_0=0.\end{equation} 
Define a filtration
\[ C^\infty_M=C^\infty_{M,(0)}\supset C^\infty_{M,(1)}\supset \cdots \]
by induction, starting with $C^\infty_{M,(1)}=\I$, where $\I$ is the vanishing ideal of $N$, and for  $i>1$ 
\[ C^\infty(U)_{(i)}=\{f\in C^\infty(U)|\  \forall X\in \H_{-j}(U),\ 0<j<i \colon\  \L_X f\in C^\infty(U)_{(i-j)}\}.\]
(note that the condition on $\L_X f$ would be vacuous if $j\ge i$). 
From the definition, it is clear that the filtration is multiplicative: $C^\infty_{M,(i_1)}\cdot C^\infty_{M,(i_2)}\subset C^\infty_{M,(i_1+i_2)}$.

\begin{theorem}\label{th:mainresult}
	Let $M$ be a manifold with a singular Lie filtration $\H_\bullet$ of order $r$, and suppose 
	$N\subset M$ is an $\H_{\bullet}$-clean submanifold. Then the filtration of $C^\infty_M$ described above 
	is a weighting of order $r$ along $N$, with \eqref{eq:filtration_nu} as the associated filtration of the normal bundle. 
\end{theorem}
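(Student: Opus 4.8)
The plan is to reduce the statement to the existence of suitable local weighted coordinates, and then verify the axioms for a weighting of order $r$ recalled in Section 2. I would work locally near a point $m \in N$ and fix generators of the singular Lie filtration adapted to the cleanness hypothesis. First, using that $N$ is $\H_\bullet$-clean, I would construct, on a neighborhood $U$ of $m$, vector fields that simultaneously generate each $\H_{-i}(U)$ and are organized so that one can read off the filtration \eqref{eq:filtration_tm} of $TM|_N$: for each $i$ pick vector fields among the generators of $\H_{-i}(U)$ whose values at points of $N$ span a complement to $\wt F_{-i+1}|_m$ inside $\wt F_{-i}|_m$, with the remaining generators tangent to $N$. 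The constancy of $\dim \wt F_{-i}|_m$ along $N$ (this is exactly $\H_{-i}$-cleanness, as in Lemma \ref{lem:clean}) guarantees these can be chosen smoothly.

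The core of the argument is then to produce weighted coordinates. I would argue by induction on $r$, or equivalently on the filtration degree, building coordinates layer by layer. Start with ordinary submanifold coordinates for $N$; this fixes the weight-$0$ coordinates $x_a$ with $w_a = 0$ (coordinates on $N$) and provides the remaining coordinates with weight to be determined. A transverse coordinate $x_b$ should be assigned weight $w_b = i$ precisely when its differential $\d x_b|_N$ pairs nontrivially with $F_{-i}$ but trivially with $F_{-i+1}$, i.e. when $x_b \in \I$ generates a class in the quotient \eqref{eq:quotient1} at level $i$. The delicate point is to arrange the higher-weight coordinates so that applying $\L_X$ for $X \in \H_{-j}$ genuinely lowers the assigned weight-filtration degree by $j$ on all monomials, not merely on the coordinates themselves; this is where one uses the bracket condition $[\H_{-i}, \H_{-j}] \subset \H_{-i-j}$ together with a flow-box / normal-form construction. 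Concretely, I would straighten out, one degree at a time, the chosen transverse generating vector fields of $\H_{-i}$, using their flows to define the weight-$i$ coordinates, invoking the bracket condition to ensure that straightening at stage $i$ does not destroy the normal form already achieved at lower stages. The flow-out description promised in the introduction — $Q \subset T_r M$ as the flow-out of $T_r N$ under the singular foliation on $T_r M$ induced by $\H_\bullet$, with cleanness ensuring smoothness — is the geometric incarnation of exactly this inductive straightening, and I would lean on it to organize the bookkeeping.

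Once adapted weighted coordinates exist on a covering family of charts, I would check that the inductively defined filtration $C^\infty_{M,(i)}$ coincides in each chart with the monomial ideal $C^\infty(U)_{(i)}$ associated to the weight sequence $(w_1,\dots,w_n)$: the inclusion $C^\infty(U)_{(i)} \subseteq C^\infty_{M,(i)}$ is immediate from multiplicativity and $\L_X x_b \in C^\infty_{M,(w_b-j)}$ for $X \in \H_{-j}$ (which holds by the coordinate construction), and the reverse inclusion follows by a Taylor-expansion argument, peeling off monomials of weighted degree $< i$ and using the defining $\L_X$-condition to kill them. Since two such charts agree with the same monomial filtration on their overlap, the transition functions are filtration-preserving, so we obtain a genuine weighted atlas. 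Finally I would confirm the structural properties from Section 2: $C^\infty_{M,(1)} = \I$ holds by construction, property \eqref{eq:quotient1} with the filtration \eqref{eq:filtration_nu} follows from the weight-assignment rule above, and the extra condition \eqref{eq:extra} for $i > 1$ is a direct consequence of the monomial description of $C^\infty(U)_{(i)}$. I expect the main obstacle to be the inductive straightening step — showing that the adapted transverse vector fields of $\H_{-i}$ can be simultaneously rectified into coordinate vector fields $\partial/\partial x_b$ of weight $i$ while preserving the lower-degree normal form, which is where the bracket compatibility and the cleanness hypothesis must be used in tandem, and where the regular-Lie-filtration arguments of Haj--Higson and Choi--Ponge have to be extended past the constant-rank setting.
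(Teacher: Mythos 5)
Your overall architecture (reduce to local weighted coordinates adapted to the cleanness hypothesis, then verify the axioms from Section 2) matches the paper, and your use of cleanness to select transverse generators $V_a$ of $\H_{-i}$ spanning complements of $\wt F_{-i+1}$ in $\wt F_{-i}$ along $N$ is exactly the paper's starting point. But the step you yourself flag as the main obstacle is, as you have formulated it, not achievable: the chosen transverse generators of the various $\H_{-i}$ cannot in general be ``simultaneously rectified into coordinate vector fields $\f{\p}{\p x_b}$,'' because they need not commute --- indeed the bracket condition is interesting precisely when they do not (in the paper's Martinez example, $[X,Y]\neq 0$, so no coordinate system makes $X,Y,Z$ all coordinate vector fields; only the first, abelian, example has this feature). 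The paper's proof deliberately avoids any rectification or flow argument. It starts from \emph{arbitrary} coordinates normalized only by $V_a(x_b)|_N=\delta_{ab}$ and then corrects each higher-weight coordinate by a polynomial $\sum_u \chi_{au}x^u$ in the transverse coordinates, with coefficients $\chi_{as}$ determined by an explicit recursion in $|s|$ so that $(V^s\wt x_a)|_N=0$ for all $s\cdot w<w_a$. The vector fields $V_a$ are never straightened; only the finitely many jet conditions $(V^s\wt x_a)|_N=0$ are arranged. If you want to pursue a flow-based route you would have to retreat to something like exponential coordinates of the second kind (privileged/Carnot coordinates in the sense of Bella\"{\i}che and Choi--Ponge), which rectify at most one vector field, and you would still need to prove the weight estimates for the remaining ones.

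There is a second, related gap: your verification that the $\L_X$-defined filtration equals the monomial ideal quietly assumes that membership in $C^\infty(U)_{(i)}$ can be tested against the particular operators $V^s=V_{k_0+1}^{s_{k_0+1}}\cdots V_n^{s_n}$ alone. The definition quantifies over \emph{all} $X\in\H_{-j}(U)$ and all iterated Lie derivatives, so one must show that any product $X_1\cdots X_k$ with $X_\nu\in\H_{-j_\nu}$ and $\sum j_\nu<i$ can be rewritten, modulo lower-order operators and operators whose leftmost factor is tangent to $N$, as $\sum_s f_sV^s$. This rearrangement lemma is where the paper actually uses the module structure (writing $X=X'+\sum f_aV_a$ with $X'$ tangent to $N$, which requires cleanness) and the commutator estimates coming from $[\H_{-i},\H_{-j}]\subset\H_{-i-j}$; your ``Taylor-expansion argument, peeling off monomials'' presupposes this reduction rather than supplying it. Once both points are repaired --- recursive polynomial correction instead of rectification, plus the operator-rearrangement lemma --- your outline does converge to the paper's argument.
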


The proof of this result is by construction of local weighted coordinates. For the case of a regular Lie filtration, this was done by Choi-Ponge \cite{choi:priv1} in the case of $\dim N=0$, and by Haj-Higson \cite{hig:eul} for $\dim N>0$.

\subsection{Proof of Theorem \ref{th:mainresult}}
Given the assumptions from Theorem \ref{th:mainresult}, we will produce weighted coordinates near any given point $m\in N$. The argument is similar to 
a proof in \cite{loi:wei}, which, in turn, builds on the constructions 
of \cite{bel:tan,choi:priv1,choi:priv2}. It  will require several steps. \smallskip

We first note that the filtration $\H_\bullet$ on the sheaf of vector fields determines a filtration on differential operators, 
\[                \cdots \supset      \on{DO}_{M,-2}  \supset  \on{DO}_{M,-1}\supset \on{DO}_{M,0}.\]
Here,  $\on{DO}_{0}(U)=C^\infty(U)$, while $\on{DO}_{-j}(U)$ for $j>0$ is spanned by sums 
of products $X_1\cdots X_k$ with $X_\nu\in \H_{-j_{\nu}}$ and $j_1+\ldots+j_k\ge j$. We say that 
$D\in \on{DO}_{-j}(U)$ has $\H$-weight $-j$. 

\begin{remark}
	The filtration on $C^\infty_M$ determines another filtration on differential operators, which depends on the choice of $N$, and which is usually  different from the filtration by $\H$-weight. 
\end{remark}
The filtration on $C^\infty_M$ can now be rephrased as follows: $f\in C^\infty(U)_{(i)}$ if and only if 
\begin{equation} j<i,\ D\in \on{DO}_{-j}(U)\ \Rightarrow \ Df|_N=0.\end{equation}
Let 
\[ k_0=\dim N,\ k_1=\dim \tilde{F}_{-1},\ldots,\ k_{r}=\dim \tilde{F}_{-r}=n,\] 
and let $w_1,\ldots,w_n$ be the corresponding weight sequence, so that $w_a=i$ for $k_{i-1}<a\le k_i$. 
Choose an open neighborhood $U$ of the given point $m$ and linearly independent vector fields
\[ V_a\in\mf{X}(U),\ \ a=k_0+1,\ldots,n\] 
such that for all $j>0$, 
the vector fields $V_{k_0+1},\ldots,V_{k_j}$ are in $\H_{-j}(U)$, and represent a frame for 
$F_{-j}|_{U\cap N}$. 
Given a multi-index $s=(s_{k_0+1},\ldots,s_n)$ with $s_a\ge 0$, let 
\[ V^s=\prod_a V_a^{s_a}=
V_{k_0+1}^{s_{k_0+1}}\cdots V_n^{s_n}\]
be the corresponding differential operator of order $|s|=\sum_a s_a$ and $\H$-weight $-j$, where $j=s\cdot w=\sum_a s_a w_a$.  

\begin{lemma}
	A function $f\in C^\infty(U)$ has filtration degree $i$ if and only if \begin{equation}\label{eq:condition}(V^s f)|_N=0\end{equation}
	for all multi-indices $s$ with $s\cdot w<i$. 
\end{lemma}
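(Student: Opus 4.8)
The plan is to show the two directions of the equivalence separately. For the forward direction, suppose $f$ has filtration degree $i$, i.e., $f\in C^\infty(U)_{(i)}$. Since $V^s$ is a differential operator of $\H$-weight $-j$ with $j=s\cdot w$, the reformulation of the filtration in terms of differential operators (namely that $f\in C^\infty(U)_{(i)}$ iff $Df|_N=0$ for all $D\in \on{DO}_{-j}(U)$ with $j<i$) immediately gives $(V^sf)|_N=0$ whenever $s\cdot w<i$. So this direction is essentially a tautology once we have the $\on{DO}$-reformulation in hand.

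The substantive direction is the converse: assuming $(V^sf)|_N=0$ for all $s$ with $s\cdot w<i$, we must deduce that $Df|_N=0$ for \emph{every} $D\in\on{DO}_{-j}(U)$ with $j<i$, not just for the monomials $V^s$. The key point is that the $V_a$, together with vector fields tangent to $N$, locally generate the filtered sheaf of differential operators: any $X\in\H_{-j}(U)$ can be written (after possibly shrinking $U$) as a $C^\infty(U)$-linear combination of $V_{k_0+1},\dots,V_{k_j}$ and vector fields in $\mf{X}_M^N$ of $\H$-weight $\ge -j$. This uses the $\H_\bullet$-cleanness: the $V_a$ were chosen to frame the subbundles $F_{-j}\subset\nu(M,N)$, so modulo $TN$ they span $\H_{-j}|_m$ over $U\cap N$, and a Nakayama/partition-of-unity argument extends this to a generating statement over $U$ after shrinking. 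Consequently every $D\in\on{DO}_{-j}(U)$ is a $C^\infty(U)$-linear combination of products $V^{s'}W_1\cdots W_\ell$ where $s'\cdot w + (\text{weight of the }W\text{'s})\ge j$ and each $W_\nu$ is tangent to $N$.

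To finish, I would argue by induction on $i$ (the base case $i=1$ being the statement that $C^\infty(U)_{(1)}=\I$, i.e., $f|_N=0$, which is exactly the $s=0$ condition). For the inductive step, take $D\in\on{DO}_{-j}(U)$ with $j<i$ and write it as above. Commute the tangent-to-$N$ vector fields $W_\nu$ to the left past the $V^{s'}$: each commutator $[W_\nu,V_a]$ lies in $\on{DO}_{-(w_a+\,?)}$ and, crucially, the total $\H$-weight is preserved or lowered appropriately, so $Df|_N$ becomes a sum of terms of the form $W_1\cdots W_\ell (V^{s''}f)|_N$ plus correction terms of strictly smaller order in the $W$'s but with $V$-monomials $V^{s''}$ of weight $s''\cdot w\le j<i$. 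Since the $W_\nu$ are tangent to $N$, applying them to a function that already vanishes on $N$ keeps it vanishing on $N$; and by hypothesis $(V^{s''}f)|_N=0$ whenever $s''\cdot w<i$. The correction terms are handled either by the same hypothesis or by a secondary induction on the order $|s'|$ and the number $\ell$ of tangent fields. I expect the main obstacle to be precisely this bookkeeping: verifying that commuting tangent vector fields past the $V_a$ does not produce terms whose $V$-part has weight $\ge i$, which rests on the bracket-compatibility $[\H_{-p},\H_{-q}]\subset\H_{-p-q}$ of the singular Lie filtration together with the fact that $[\mf{X}_M^N,\mf{X}_M^N]\subset\mf{X}_M^N$, so that tangency and $\H$-weight are simultaneously controlled under the relevant commutators.
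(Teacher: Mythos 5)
Your proposal is correct and follows essentially the same route as the paper: both directions rest on the reformulation of the filtration in terms of differential operators of given $\H$-weight, and the converse is proved by writing each $X\in\H_{-\ell}(U)$ as $X'+\sum f_aV_a$ with $X'$ tangent to $N$ (using cleanness), reordering products modulo lower-order operators of the same weight, and inducting on the order of the operator. The only cosmetic difference is that the paper moves a single tangent vector field to the leftmost slot in each inductive step, whereas you move all of them to the left and quote the hypothesis for the remaining $V$-monomial; both variants close the argument identically.
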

\begin{proof}
Clearly, if $f$ has filtration degree $i$, then the condition \eqref{eq:condition} holds since $V^s$ is a differential operator of $\H$-weight $-j$, with 
$j<i$. For the converse, suppose the condition \eqref{eq:condition} is satisfied. 
We want to show that $Df|_N=0$ for all differential operators $D$ of $\H$-weight $-j$  with $j<i$.  
Using induction on the order $k$ of differential operators, we may assume that this holds true  for all such differential operators of order less then a given number $k$. To prove it for differential operators $D$ of order $k$, it suffices to show that any $D\in \on{DO}^{k}(U)_{-j}$ may be written in the form 
\begin{equation}\label{eq:rearranged} D=\sum_s f_s V^s +D'+D''\end{equation}
where the sum is over multi-indices $s$ with $|s|=k$ and $s\cdot w=j$, where $D'\in \on{DO}^{k}(U)_{-j}$ is a 
sum of products $Y_1\cdots Y_k$ (with $Y_\nu\in \H_{-\ell_\nu}(U),\ \sum \ell_\nu=j$) such that   
the first vector field $Y_1$ is tangent to  $N$, and where $D''\in \on{DO}^{k-1}(U)_{-j}$. Once this is shown, 
we have $D''f|_N=0$ by induction hypothesis,  and similarly $D'f|_N=0$ since $Y_1\cdots Y_k f|_N=(Y_1|_N)(Y_2\cdots Y_k f|_N)=0$.

The decomposition \eqref{eq:rearranged}  follows from the following observations: 
\begin{enumerate}
	\item Suppose $X_1,\ldots,X_k$ are vector fields with $X_\nu\in \H_{-j_\nu}(U),\ \  j_1+\ldots+j_k=j<i$. Then 
	\[  (X_1\cdots X_\nu\cdots X_{\nu'}\cdots X_k)- (X_1\cdots X_{\nu'}\cdots X_{\nu}\cdots X_k)\in \on{DO}^{k-1}(U)_{-j},\]
	for all $\nu\neq \nu'$. Hence, modulo differential operators of lower order
	we may re-order the $X_\nu$ as we please.  
	In particular, if any of the $X_\nu$ is tangent to $N$, we may `move it to first place' .
	\item 
	Similarly, given $g\in C^\infty(U)$, we have 
	\[  (X_1\cdots (g X_\nu)\cdots X_k)-g\,( X_1\cdots X_\nu\cdots X_k)\in \on{DO}^{k-1}(U)_{-j}.\]
	Since any $X\in \H_{-\ell}(U)$ is of the form $X=X'+\sum f_a V_a$ with $X'$ tangent to $N$, $f_a\in C^\infty(U)$,
	and $V_a\in \H_{-\ell}(U)$, we may use this to re-arrange any product of $X_\nu$'s in the form \eqref{eq:rearranged}. 
\end{enumerate}
 
\end{proof}

We now proceed as in \cite{loi:wei}. Taking $U$ smaller if needed, choose
coordinates $x_1,\ldots x_n$ on $U$ such that 
\begin{equation}\label{eq:start} V_a(x_b)|_N=\delta_{ab},\ \ a>k_0. \end{equation}
(In particular, $x_1,\ldots,x_{k_0}$ restrict to coordinates on $U\cap N$.) We will show how to modify the coordinates in 
such a way that $x_a$ has weight $w_a$ (while retaining the property  \eqref{eq:start}). For $w_a\le 2$, no modification is needed. Indeed, the coordinates $x_{k_0+1},\ldots,x_{k_1}$ have weight $1$ since they vanish on $N$, while 
$x_{k_1+1},\ldots,x_{k_2}$ have weight $2$ since their differentials vanish on $\wt{F}_{-1}$. 
However, the coordinates $x_a$ with $w_a\ge 3$ may require adjustment. Suppose by induction that for a given $\ell\ge 2$, 
the coordinates $x_a$ with $k_{\ell-1}<a\le k_\ell$ have weight $\ell$. For $x_a$ with $k_\ell<a\le k_{\ell+1}$,   
we  look for a coordinate change of the form  
\[ \wt{x}_a=x_a+\sum  \chi_{au}\,  x^u \]
(using multi-index notation  $x^u=x_1^{u_1}\cdots x_n^{u_n}$), 
where the sum is over multi-indices with  
\[ |u|=\sum_b u_b\ge 2,\ \ \ \  w\cdot u\,<\,w_a,\ \ \ \ u_b=0 \mbox{ for } b\le k_0\]
such that the coefficients $\chi_{au}\in C^\infty(U)$ depend only on the coordinates $x_1,\ldots,x_{k_0}$. 
The condition $|u|\ge 2$ means that $\sum_u  \chi_{au}\,  x^u\in \I^2(U)$; hence the coordinate change will retain the property \eqref{eq:start}. The property $w\cdot u\,<\,w_a$ means, in particular, that only $x_b$'s with $b\le k_\ell$ enter the expression for $\sum_u  \chi_{au}\,  x^u$. 
The coordinate function $\wt{x}_a$ has filtration degree $w_a$ if and only if 
\[ (V^s \wt{x}_a)|_N=0\] 
for all multi-indices $s=(s_{k_0+1},\ldots,s_n)$ with $w\cdot s<w_a$. As explained in \cite{loi:wei}, these conditions 
on the functions $\chi_{as}$ have a unique solution, defined recursively in terms of $|s|$: 
\[ \chi_{as}=-\f{1}{c_s}\Big(V^s x_a|_N+\sum_{u\colon 2\le |u|<|s|} \big(V^s(\chi_{au}\,  x^u)\big)\big|_N\Big), \qquad c_s=V^s x^s|_N.\]
In conclusion, with this choice of $\chi_{as}$ the new coordinates $\wt{x}_a$ have weight $\ell+1$. Rename $\wt{x}_a$ as $x_a$, and proceed. 
The conditions $V_a(x_b)=\delta_{ab}$ for 
$a>k_0$ show that the filtration of $TM|_{U\cap N}$ for this weighting is given by the subbundles spanned by 
\[ TN+\on{span}\{V_a|_N,\ k_0<a\le k_i\}=\wt{F}_{-i}|_{U\cap N}.\]

\subsection{Examples}
Here are two examples illustrating the construction of weighted coordinates for singular Lie filtrations.

\begin{example}
Consider the vector fields 
\[ X=\f{\p}{\p x}+x\f{\p}{\p z},\ \ Y=\f{\p}{\p y},\ \ Z=\f{\p}{\p z}\]
on $M=\R^3$. Define a regular Lie filtration of order $3$, where $\H_{-1}$ is spanned by $X$, $\H_{-2}$ is spanned by $X,Y$, and 
$\H_{-3}=\mf{X}_M$. 
This defines an order $3$ weighting at $N=\{0\}$,with $w_1=1,\,w_2=2,\,w_3=3$. Take $V_1=X,\ V_2=Y,\ V_3=Z$ 
to be the frame of the discussion above. The original coordinates $x_1=x,\,x_2=y,\,x_3=z$  satisfy 
$V_a x_b|_0=\delta_{ab}$, but they are not weighted coordinates since $z$ does not have weight $3$. To obtain 
weighted coordinates, we use a coordinate change $\wt{z}=z+\lambda x^2$. This satisfies 
\[\L_X\wt{z}=x+2\lambda x,\]
which has weight $3-1=2$ if and only if $\lambda=-\f{1}{2}$. We  conclude that
\[ x,\ y,\ z-\hh x^2\]
is the desired set of weighted coordinates. Note that in the new coordinates, $X,Y,Z$ are just the coordinate vector fields. 
\end{example}

\begin{example}
For a singular Lie filtration that is not regular, consider the following example of a 
Martinez-Carnot structure on $M=\R^3$: Let
	\[X=\f{\p}{\p x}+(2x+y)\f{\p}{\p z},\ Y=\f{\p}{\p y}+(x+x^2)\f{\p}{\p z}.\]
Define a singular Lie filtration of order 4, where 	
$\H_{-1}$ is spanned by $X$, $\H_{-2}$ is spanned by $X,Y$, 
$\H_{-3}$ 
 is spanned by $X,Y,[X,Y]$, and $\H_{-4}=\mf{X}_M$.

Again, let $N=\{0\}$. Take $V_1=X,\ V_2=Y,\ V_3=Z=\f{\p}{\p z}$ corresponding to $w_1=1,w_2=2,w_3=4$. 
The coordinates $x,y$ have filtration degrees $1,2$ has required, but $z$ does not have filtration degree $4$. To obtain weighted coordinates, we seek a coordinate change of the form $\wt{z}=z+\lambda x^2+\mu xy$. From 
\[ \L_X\wt{z}=(2x+y)+2\lambda x+\mu y,\ \ \ \L_Y\wt{z}=(x+x^2)+\mu x\]
we see that $\L_Y\wt{z}$ has filtration $2$ if and only if $\mu=-1$, and 
$\L_X \wt{z}$ has filtration degree $3$ if and only if furthermore $\lambda=-1$. Hence, 
\[ x,\ y,\ z-x^2-xy\]
is the desired set of weighted coordinates. 
\end{example}

\section{Singular Lie filtrations in terms of higher tangent bundles}\label{sec:trm}
In \cite{loi:wei}, we gave an alternative description of weightings on $M$ in terms of subbundles $Q$ of higher tangent bundles $T_rM$.  In this section, we will explain that similarly, singular Lie filtrations admit descriptions as singular foliations of higher tangent bundles. Given an $\H$-clean submanifold $N\subset M$, the subbundle $Q$ for the corresponding weighting along $N$ is described as the flow-out of $T_rN\subset T_rM$. 
In this section, we will temporarily abandon the sheaf language, for notational convenience.

\subsection{Higher tangent bundles} \label{subsec:trm}
We begin with some background material on higher tangent bundles. A reference for some of this material is the book \cite{kol:nat}. 

\smallskip
The \emph{$r$-th tangent bundle} $T_rM\to M$, also known as \emph{bundle of $r$-velocities}, was introduced by Ehresmann as the space of $r$-jets of curves 
\[ T_rM=J_0^r(\R,M).\] Its elements are equivalence classes of curves $\gamma\colon \R\to M$, where $\gamma_1,\gamma_2$ are considered equivalent if $\gamma_1(0)=\gamma_2(0)$ and the Taylor expansions 
of the two curves in a coordinate chart  agree up to order $r$. There is also an algebraic definition, which for 
us will be more convenient: Let $\AA_r$ be the unital algebra with a single generator $\epsilon$ and relation $\epsilon^{r+1}=0$. Then 
\begin{equation}\label{eq:trm} T_rM=\Hom_\alg(C^\infty(M),\AA_r).\end{equation}
Elements of $T_rM$ are sums $\su=\sum_{i=0}^r \su_i \epsilon^i$ with $\su_i\colon C^\infty(M)\to \R$, where 
$\su_0$ is an algebra morphism (specifying a base point in $M$), $\su_1$ is a  derivation with respect to $\su_0$ (specifying a tangent vector), and so on. The smooth structure on $T_rM$  is characterized by the property  that for all 
$f\in C^\infty(M)$, the function given by evaluation 
\[T_rf\colon T_rM\to \AA_r,\ \ (T_rf)(\su)=\su(f) \] 
is again smooth. For $r>1$, the $r$-th tangent bundle is not a vector bundle, but is a graded bundle (see Section \ref{subsec:graded}), with the monoid action of $t\in \R$ 
given by the algebra morphism of $\AA_r$ taking $\sum_{i=0}^r \su_i \epsilon^i$ to $ \sum_{i=0}^r \su_i t^i\epsilon^i$. 
The $r$-th tangent bundle fits into a tower of fiber bundles 
\begin{equation}\label{eq:tower} \cdots \to T_rM\to T_{r-1}M\to \cdots \to TM\to M\end{equation}
where the maps $T_rM\to T_{r-1}M$ are induced by the algebra morphisms $\AA_r\to \AA_{r-1}$. 
The tangent bundle $TM\to M$ (regarded as a Lie group bundle) acts on $T_rM$ by 
\begin{equation} 
\label{eq:TMaction}
TM\times_{M}T_rM\to T_rM,\ \ v\cdot \su=\su-v\epsilon^r;
\end{equation}
the maps in  \eqref{eq:tower} may also be seen as the quotient maps for this action. 

\begin{remark}
The tangent bundle $TM$ may be identified with the normal bundle of the diagonal in $M\times M$. Similarly, 
$T_rM$ may be identified with the weighted normal bundle of the diagonal in $M^{r+1}$, for a suitable weighting. 
Details will be given elsewhere. 	
\end{remark}

\subsection{Lifts}
For $f\in C^\infty(M)$ we denote by $f^{(i)}\in C^\infty(T_rM)$ the components of $T_rf$, so that 
\[ T_rf=\sum_{i=0}^r f^{(i)}\epsilon^i.\]
Here $f^{(0)}$ is the pullback of $f$ under the base projection, $f^{(1)}$ is the pullback of the exterior differential $\d f\in C^\infty(T_1M)$ under the map $T_rM\to TM$; more generally, $f^{(i)}$ is the pullback of a function on $T_iM$. The function 
$f^{(i)}$ is homogeneous of degree $i$ for the scalar multiplication on $T_rM$. 
 The \emph{tangent lift} 
\[ T_rX\in \mf{X}(T_rM)\]
of a vector field $X\in \mf{X}(M)$ is characterized by the property $(T_rX)(T_rf)=T_r(Xf)$. We also use the notation $X^{(0)}=T_rX$, so that  $X^{(0)}f^{(i)}=(Xf)^{(i)}$.  The \emph{vertical lifts} 
\[ X^{(-1)},\ldots,X^{(-r)}\] are similarly defined by $X^{(-j)}f^{(i)}=(Xf)^{(i-j)}$; the fact that these vanish on all $f^{(0)}$ 
implies that they are  tangent to the fibers of $T_rM\to M$ everywhere. 
The superscript indicates the homogeneity, i.e., $\kappa_t^* X^{(-j)}=t^{-j} X^{(-j)}$ for $t\neq 0$. The lifts satisfy 
\begin{equation}\label{eqLiftproperties} [X^{(-i)},Y^{(-j)}]=[X,Y]^{(-i-j)},\ \ (fX)^{(-i)}=\sum_{j=0}^{r-i} f^{(j)}X^{(-i-j)}.\end{equation}
The vector fields $X^{(-r)}$ define a vector bundle action of $TM\to M$ on $T_rM\to M$ (as in \eqref{eq:TMaction}), with quotient $T_{r-1}M$. 
If $x_a$ for $ a=1,\ldots,n$ are local coordinates on $U\subset M$, then the functions 
\[ x_a^{(i)},\ \ 1\le a\le n,\ \ 0\le i\le r\] serve as fiber bundle coordinates on $T_rU\subset T_rM$. 
The tangent lift of $X=\sum_a f_a \f{\p}{\p x_a}$ is 
\[ X^{(0)}=\sum_{a,i} f_a^{(i)} \f{\p}{\p x_a^{(i)}}.\]
The lift $X^{(-j)}$ is obtained from this expression by replacing $\f{\p}{\p x_a^{(i)}}$ with $\f{\p}{\p x_a^{(i+j)}}$ if 
$i+j\le r$, with $0$ otherwise.

We shall also need the following observation, discussed in the articles \cite{mor:pro,oka:pro} where it is 
attributed to Koszul.
\begin{proposition}[Koszul]
There is a natural action 	
\[ \AA_r\to \Gamma(\on{End}(T(T_rM)))\]
of the algebra 	$\AA_r$ on the  fibers of the tangent bundle of $T_rM$,
in such a way that the generator $\epsilon\in \AA_r$ acts as 
\[ \epsilon\cdot X^{(0)} =X^{(-1)},\ldots,
\epsilon\cdot 
X^{(-r+1)}=X^{(-r)},\ \epsilon\cdot X^{(-r)}=0\]
for all $X\in\mf{X}(M)$. 
\end{proposition}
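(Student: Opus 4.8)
The plan is to construct the $\AA_r$-action explicitly in local coordinates, verify its defining properties, and then check that the local definitions glue to a global, coordinate-independent action. First I would work over a chart $U\subset M$ with coordinates $x_1,\ldots,x_n$, so that $T_rU$ has fiber coordinates $x_a^{(i)}$ for $0\le i\le r$ and the vector fields $\partial/\partial x_a^{(i)}$ trivialize $T(T_rU)$. On this frame, define $\epsilon$ to act by $\epsilon\cdot\partial/\partial x_a^{(i)}=\partial/\partial x_a^{(i+1)}$ for $i<r$ and $\epsilon\cdot\partial/\partial x_a^{(r)}=0$, and extend so that $\AA_r$ acts by $\epsilon^k\cdot\partial/\partial x_a^{(i)}=\partial/\partial x_a^{(i+k)}$ (interpreted as $0$ once the upper index exceeds $r$). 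This is manifestly an $\AA_r$-module structure on each fiber, nilpotent of the correct order. The stated formulas $\epsilon\cdot X^{(-j)}=X^{(-j-1)}$ then follow immediately from the coordinate description of the lifts recalled in the excerpt: $X^{(-j)}$ is obtained from $X^{(0)}=\sum_{a,i}f_a^{(i)}\,\partial/\partial x_a^{(i)}$ by shifting $\partial/\partial x_a^{(i)}\mapsto\partial/\partial x_a^{(i+j)}$, and applying $\epsilon$ shifts the lower index by one more, which is exactly $X^{(-j-1)}$ (with $X^{(-r-1)}=0$).

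The substantive point — and the main obstacle — is well-definedness: I must show the operator $\epsilon\in\Gamma(\on{End}(T(T_rU)))$ defined by the coordinate frame does not depend on the choice of coordinates, so that the local definitions patch to a global endomorphism. Here is where the functoriality of $T_r$ does the work. The cleanest route is to characterize the $\AA_r$-action intrinsically rather than by a frame. I would observe that for any $X\in\mf{X}(M)$ and any $0\le j\le r$ one has $\epsilon\cdot X^{(-j)}=X^{(-j-1)}$, and that the vertical lifts $\{X^{(-j)}:X\in\mf{X}(M),\ 1\le j\le r\}$ together with the $X^{(0)}$ span $T(T_rM)$ over $C^\infty(T_rM)$; combined with the $C^\infty(T_rM)$-linearity one wants for the action, this pins down $\epsilon$ uniquely provided it is well-defined. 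Thus well-definedness reduces to the compatibility relation: if $\sum_\alpha g_\alpha\,X_\alpha^{(-j_\alpha)}=\sum_\beta h_\beta\,Y_\beta^{(-k_\beta)}$ as vector fields on $T_rM$, one needs $\sum_\alpha g_\alpha\,X_\alpha^{(-j_\alpha-1)}=\sum_\beta h_\beta\,Y_\beta^{(-k_\beta-1)}$. This I would prove using the second identity in \eqref{eqLiftproperties}, namely $(fX)^{(-i)}=\sum_{j=0}^{r-i}f^{(j)}X^{(-i-j)}$: applying $\epsilon$ to both sides of that identity and using the coordinate formula gives $(fX)^{(-i-1)}=\sum_{j=0}^{r-i-1}f^{(j)}X^{(-i-1-j)}$, which is consistent (it is just the same identity with $i$ replaced by $i+1$), showing $\epsilon$ respects the module relations among lifts.

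Alternatively, and perhaps more transparently, one can produce $\epsilon$ functorially from the algebra structure of $\AA_r$ itself. Since $T_rM=\Hom_\alg(C^\infty(M),\AA_r)$, a tangent vector to $T_rM$ at a point $\su$ is a derivation $v\colon C^\infty(M)\to\AA_r$ over $\su$, i.e. $v(fg)=v(f)\su(g)+\su(f)v(g)$; the point is that such derivations form a module over $\AA_r$ via $(\epsilon\cdot v)(f)=\epsilon\,v(f)$, where the product on the right is taken in $\AA_r$, and this is again a derivation over $\su$ because $\AA_r$ is commutative. This gives, with no choices, an $\AA_r$-module structure on each $T_\su(T_rM)$, hence a natural map $\AA_r\to\Gamma(\on{End}(T(T_rM)))$, and one then checks in coordinates that the generator $\epsilon$ acts as claimed on the lifts — which is the computation above. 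I would present the functorial construction as the definition and the coordinate computation as the verification of the displayed formulas, since the former makes naturality (the word in the statement) automatic and the latter is the routine check.
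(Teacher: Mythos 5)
Your proposal is correct, and the functorial construction you adopt as the definition is essentially the paper's own: the paper writes $T(T_rM)=\Hom_\alg(C^\infty(M),\AA_1\otimes\AA_r)$ and lets $x\in\AA_r$ act through the algebra endomorphism $1\otimes y+\epsilon\otimes z\mapsto 1\otimes y+\epsilon\otimes xz$, which is exactly your ``multiply the derivation $v\colon C^\infty(M)\to\AA_r$ over $\su$ by $x$'' description in different notation. The coordinate verification of $\epsilon\cdot X^{(-j)}=X^{(-j-1)}$ that you supply is a routine check the paper leaves implicit.
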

One way of describing this algebra action uses the identification 
\[ T(T_rM)=\Hom_\alg(C^\infty(M),\AA_1\otimes \AA_r).\]
Elements of $\on{End}_\alg(\AA_1\otimes \AA_r)$ act on $T(T_rM)$ by composition of algebra morphisms. The Koszul action comes from the inclusion $\AA_r\to \on{End}_\alg(\AA_1\otimes \AA_r)$, where 
$x\in \AA_r$ acts as  
\[ x\cdot(1\otimes y+\epsilon\otimes z)=1\otimes y+\epsilon\otimes xz.\]

 \subsection{The group structure on $\Gamma(T_rM)$}

 The group structure on sections of the tangent bundle, 
 given by addition of vector fields, generalizes to a nilpotent group structure on sections of 
 the $r$-th tangent bundle $T_rM\to M$. Similar group structures on sections are discussed in 
 \cite[Chapter 37.6]{kol:nat} in the general context of Weil functors. Likewise the action of $\Gamma(TM)$ on $TM$ generalizes to an action of $\Gamma(T_rM)$ on $T_rM$.
 
We begin with the characterization 
 of diffeomorphisms as algebra automorphisms 
 \[\on{Diff}(M)= \Aut_\alg(C^\infty(M));\]
 here a diffeomorphism $\Phi$ corresponds to the algebra automorphism $\Phi_*$ given by push-forward of functions. Identifying 
 $M=\Hom_\alg(C^\infty(M),\R)$ via evaluation maps $m\mapsto \on{ev}_m$, the action of $\Aut_\alg(C^\infty(M))$ on  $M$ is given by $\on{ev}_m\mapsto \on{ev}_{\Phi(m)}=\on{ev}_m\circ (\Phi_*)^{-1}$. To extend to the r-th tangent bundle
 it is convenient to write \eqref{eq:trm} as
 \[ T_rM=\Hom_{\AA_r-\alg}(C^\infty(M)\otimes\AA_r,\AA_r)\]
 where the subscript indicates $\AA_r$-linear maps. The group 
 \[ \mathfrak{U}_r=\Aut_{\AA_r-\alg}(C^\infty(M)\otimes \AA_r),\]
 acts on $T_rM$ by $U\cdot \su= \su\circ U^{-1}$.

 \begin{remark}
 	\label{rm:jetpicture}
 	In the jet picture, note that on $T_rM$ there is an action of the group of smooth 1-parameter families of diffeomorphisms $\Phi \colon \mathbb{R} \times M \rightarrow M$ of $M$: $\Phi$ sends the equivalence class of the smooth curve $\gamma \colon \mathbb{R} \rightarrow M$ to the equivalence class of the smooth curve $t\mapsto \Phi(t,\gamma(t))$. Then $\mf{U}_r$ is the quotient of this group by the normal subgroup acting trivially on $T_rM$.
 \end{remark}

 We may write the $\AA_r$-module endomorphisms of 
 $C^\infty(M)\otimes \AA_r$ as
 \[ U=\sum_{i=0}^r U_i \epsilon^i,\ \ \ U_i\in \on{End}(C^\infty(M)).\]
 This defines an $\AA_r$-linear  \emph{algebra} endomorphism if and only if 
 \begin{equation}\label{eq:88} U_i(fg)=\sum_{i_1+i_2=i}U_{i_1}(f)U_{i_2}(g),\end{equation}
 and is invertible if and only if $U_0$ is invertible. 
 The monoid $(\R,\cdot)$ acts on $\mathfrak{U}_r$ by group homomorphisms, via $\sum_{i=0}^r U_i \epsilon^i\mapsto 
 \sum_{i=0}^r U_i t^i \epsilon^i$. Note also that the quotient maps $\AA_r\to \AA_{r-1}$ give a tower of groups and surjective group homomorphisms 
 \[ \cdots \to \mathfrak{U}_r\to \mathfrak{U}_{r-1}\to \cdots \to \mathfrak{U}_0=\on{Diff}(M);\]
 The group $\mf{U}_1$ is a semidirect product $\mf{X}(M)\rtimes \on{Diff}(M)$, with $(X,\Phi)$ corresponding to 
 $U=\Phi_*+\epsilon X$. 
 
 \begin{lemma}\label{lem:ker}
The kernel of $\mathfrak{U}_r\to \mathfrak{U}_{r-1}$ is a copy of $\mf{X}(M)$, with group structure given by addition. 
\end{lemma}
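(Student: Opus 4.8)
The plan is to make the identification explicit by writing down elements of the kernel and recognizing the group law. An element $U=\sum_{i=0}^r U_i\epsilon^i\in\mf U_r$ lies in the kernel of $\mf U_r\to\mf U_{r-1}$ precisely when its image in $\Aut_{\AA_{r-1}-\alg}(C^\infty(M)\otimes\AA_{r-1})$ is the identity, i.e.\ when $U_0=\on{id}$ and $U_1=\cdots=U_{r-1}=0$. Thus such a $U$ has the form $U=\on{id}+U_r\epsilon^r$. First I would determine which operators $U_r\in\on{End}(C^\infty(M))$ are allowed: the algebra-morphism condition \eqref{eq:88}, for $i=r$, reads $U_r(fg)=U_0(f)U_r(g)+U_r(f)U_0(g)=fU_r(g)+U_r(f)g$ (all cross terms with $0<i_1<r$ vanish since those $U_{i_1}=0$), so $U_r$ must be a derivation of $C^\infty(M)$, hence a vector field $X\in\mf X(M)$. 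Conversely, for any $X\in\mf X(M)$ the operator $\on{id}+\epsilon^r X$ satisfies \eqref{eq:88} and is invertible (its $U_0=\on{id}$), so it lies in $\mf U_r$ and manifestly in the kernel. This gives a bijection $\mf X(M)\xrightarrow{\ \sim\ }\ker(\mf U_r\to\mf U_{r-1})$, $X\mapsto \on{id}+\epsilon^r X$.

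Next I would check this bijection is a group homomorphism with $\mf X(M)$ under addition. Since $\epsilon^{r+1}=0$ in $\AA_r$, composing two such endomorphisms gives
\[ (\on{id}+\epsilon^r X)\circ(\on{id}+\epsilon^r Y)=\on{id}+\epsilon^r X+\epsilon^r Y+\epsilon^{2r}XY=\on{id}+\epsilon^r(X+Y),\]
because $2r>r$ forces $\epsilon^{2r}=0$ (using $r\ge 1$). Hence the group operation on the kernel corresponds exactly to addition of vector fields, as claimed. The inverse is $\on{id}-\epsilon^r X$, consistent with the abelian group structure.

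I do not expect any serious obstacle here; the statement is essentially a bookkeeping consequence of the nilpotency relation $\epsilon^{r+1}=0$. The one point deserving a word of care is the verification that the map is a \emph{group} homomorphism and not merely a bijection of sets compatible with the monoid $(\R,\cdot)$-action — but this is immediate from the displayed composition formula. Optionally, I would remark on the compatibility with the geometric picture: under the action $U\cdot\su=\su\circ U^{-1}$ on $T_rM$, the kernel element $\on{id}+\epsilon^r X$ acts by $\su\mapsto\su\circ(\on{id}-\epsilon^r X)$, which on $T_rf=\sum_i f^{(i)}\epsilon^i$ subtracts $(Xf)^{(0)}\epsilon^r$ from the top component; this is precisely the translation action \eqref{eq:TMaction} of $TM$ on $T_rM$ by $X^{(-r)}$, so the kernel is identified with $\mf X(M)$ in a manner compatible with the $TM$-action already described.
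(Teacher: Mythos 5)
Your proposal is correct and follows the same route as the paper's proof, which simply observes that kernel elements have the form $\on{id}+U_r\epsilon^r$ and that condition \eqref{eq:88} then forces $U_r$ to be a derivation; you additionally spell out the composition computation $(\on{id}+\epsilon^r X)\circ(\on{id}+\epsilon^r Y)=\on{id}+\epsilon^r(X+Y)$ showing the group law is addition, which the paper leaves implicit. No gaps.
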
 
\begin{proof}
Elements of the kernel are of the form 	$\on{id}_{C^\infty(M)}+U_r\epsilon^r$. Here \eqref{eq:88} says that $U_r$ is a derivation of $C^\infty(M)$. 
\end{proof}
The $\mathfrak{U}_r$-action of $T_rM$ determines an action on 
functions, via push-forward. In particular, we are interested in the action on lifts $f^{(i)}$.  
\begin{lemma}\label{lem:formula}
For $U\in \mathfrak{U}_r,\ f\in C^\infty(M),\ i=0,\ldots,r$, 
\[ U\cdot f^{(i)}=\sum_{j=0}^r (U_j(f))^{(i-j)}.\]
\end{lemma}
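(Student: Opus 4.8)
The plan is to compute the push-forward action of $U\in\mf{U}_r$ on the lifted functions $f^{(i)}$ directly from the definitions, using the algebraic model $T_rM=\Hom_{\AA_r\text{-}\alg}(C^\infty(M)\otimes\AA_r,\AA_r)$ and the characterization of $f^{(i)}$ as the components of $T_rf$. Recall that the $\mf{U}_r$-action on $T_rM$ is $U\cdot\su=\su\circ U^{-1}$, and push-forward of functions is dual to this: for $g\in C^\infty(T_rM)$, the function $U\cdot g$ is determined by $(U\cdot g)(U\cdot\su)=g(\su)$, equivalently $(U\cdot g)(\su)=g(U^{-1}\cdot\su)=g(\su\circ U)$. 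So the first step is to unwind what $g(\su\circ U)$ means when $g=f^{(i)}$.

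Next I would recall that the function $T_rf\colon T_rM\to\AA_r$ is given by evaluation, $(T_rf)(\su)=\su(f)$, where on the right $f$ is viewed inside $C^\infty(M)\otimes\AA_r$ as $f\otimes 1$ and $\su$ is extended $\AA_r$-linearly; and $f^{(i)}$ is the coefficient of $\epsilon^i$ in this expression. Therefore $(U\cdot f^{(i)})(\su)$ is the coefficient of $\epsilon^i$ in $(\su\circ U)(f)=\su(U(f))$. Writing $U=\sum_{j=0}^r U_j\epsilon^j$ with $U_j\in\on{End}(C^\infty(M))$, we get $U(f)=\sum_j U_j(f)\,\epsilon^j$ inside $C^\infty(M)\otimes\AA_r$, hence $\su(U(f))=\sum_j \su(U_j(f))\,\epsilon^j=\sum_j T_r(U_j(f))(\su)\,\epsilon^j$. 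Now expand each $T_r(U_j(f))=\sum_{k}(U_j(f))^{(k)}\epsilon^k$ and collect the coefficient of $\epsilon^i$: the contribution is $\sum_{j+k=i}(U_j(f))^{(k)}=\sum_{j=0}^r (U_j(f))^{(i-j)}$, with the convention that $(\cdot)^{(i-j)}=0$ when $i-j<0$ (and $j\le r$ automatically since $\epsilon^{r+1}=0$). This is exactly the claimed formula.

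The one point that needs a little care — and is the closest thing to an obstacle — is bookkeeping the $\AA_r$-linear extension correctly: one must check that evaluating $\su\circ U$ on $f\otimes 1$ really does produce $\sum_j \su(U_j(f))\epsilon^j$, i.e.\ that composing the $\AA_r$-linear algebra map $\su$ with the $\AA_r$-linear endomorphism $U$ and then feeding in $f\otimes 1\in C^\infty(M)\otimes\AA_r$ commutes with pulling the $\epsilon^j$ out. This is immediate from $\AA_r$-linearity of $\su$: $\su(U(f\otimes 1))=\su\bigl(\sum_j U_j(f)\otimes\epsilon^j\bigr)=\sum_j\su(U_j(f)\otimes 1)\,\epsilon^j$. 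No smoothness issue arises because each $(U_j(f))^{(k)}$ is smooth by the very characterization of the smooth structure on $T_rM$, so the resulting sum is a smooth function, consistent with $U\cdot f^{(i)}$ being smooth.

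As a sanity check before writing it up, I would verify the formula against the known low-degree cases recorded earlier: for $r=1$ with $U=\Phi_*+\epsilon X$ (so $U_0=\Phi_*$, $U_1=X$ acting as a derivation relative to $\Phi_*$), the formula gives $U\cdot f^{(0)}=(\Phi_*f)^{(0)}$ and $U\cdot f^{(1)}=(\Phi_*f)^{(1)}+(Xf)^{(0)}$, matching the semidirect product description $\mf{U}_1=\mf{X}(M)\rtimes\on{Diff}(M)$; and for $U=\on{id}+U_r\epsilon^r$ in $\ker(\mf{U}_r\to\mf{U}_{r-1})$ with $U_r$ a derivation (Lemma \ref{lem:ker}), the formula reduces to $U\cdot f^{(i)}=f^{(i)}+(U_rf)^{(i-r)}$, i.e.\ only $f^{(r)}$ is shifted, by $(U_rf)^{(0)}$, as expected for the vertical-lift subgroup. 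With these checks in place the proof is just the three-line computation above.
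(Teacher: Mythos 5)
Your computation is correct and is essentially identical to the paper's proof: both unwind $(U\cdot T_rf)(\su)=(T_rf)(\su\circ U)=\su(U(f))$, expand $U=\sum_j U_j\epsilon^j$ using $\AA_r$-linearity of $\su$, and compare coefficients of $\epsilon^i$. The extra care about the $\AA_r$-linear extension and the low-degree sanity checks are fine but not needed beyond what the paper records.
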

\begin{proof}
For $\su\in T_rM$, 
\[ (U\cdot T_rf)(\su)=(T_rf)(U^{-1}\cdot \su)
=(T_rf)(\su\circ U)=(\su\circ U)(f).\]
Expanding $T_rf=\sum_i f^{(i)}\epsilon^i,\ U=\sum U_j\epsilon^j$ this becomes 
\[ \sum_i (U\cdot f^{(i)})(\su)\ \epsilon^i=\sum_j \su(U_j(f))\epsilon^j=
\sum_{i,j} (U_j(f))^{(i-j)}(\su) \epsilon^{i}.\]
The lemma follows by comparing  coefficients. 
\end{proof}
Let
\[ \on{Lie}(\mathfrak{U}_r)=\on{Der}_{\AA_r-\alg}(C^\infty(M)\otimes \AA_r)\]
be the space of $\AA_r$-linear derivations of the algebra $C^\infty(M)\otimes \AA_r $. 
Writing its elements as $X=\sum_i X_i \epsilon^i$, the condition $X(fg)=X(f)g+fX(g)$ simply says that all $X_i$ are derivations of $C^\infty(M)$. That is, 
\[ \on{Lie}(\mathfrak{U}_r)=\mf{X}(M)\otimes \AA_r.\]
\begin{lemma}
	The action of the Lie algebra $\on{Lie}(\mathfrak{U}_r)$ on $T_rM$ is given by the map  
\[ \varrho\colon \mf{X}(M)\otimes \AA_r	\to \mf{X}(T_rM),\ \ X=\sum_{j=0}^r X_j\epsilon^j\mapsto \sum_{j=0}^r X_j^{(-j)}.\]
\end{lemma}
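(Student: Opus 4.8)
The plan is to verify directly, on the level of derivations, that the infinitesimal action of $\on{Lie}(\mathfrak{U}_r)=\mf{X}(M)\otimes\AA_r$ on $T_rM$ is the claimed map $\varrho$. Since both sides are $\R$-linear in $X=\sum_j X_j\epsilon^j$, it suffices to treat a single homogeneous piece $X=X_j\epsilon^j$ with $X_j\in\mf{X}(M)$; the general case then follows by additivity. For such an element, the corresponding one-parameter subgroup of $\mathfrak{U}_r$ is $U(t)=\exp(tX_j\epsilon^j)=\on{id}+tX_j\epsilon^j+\tf{t^2}{2}X_j^2\epsilon^{2j}+\cdots$, an honest $\AA_r$-linear algebra endomorphism by Lemma \ref{lem:ker} and its proof (each $X_j$ being a derivation, the composites $X_j^k\epsilon^{jk}$ satisfy the Leibniz identity \eqref{eq:88}). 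The differentiated action on $T_rM$ is then computed by its effect on the generating functions $f^{(i)}$, $f\in C^\infty(M)$, since these determine the smooth structure.

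The key computation is: first I would apply Lemma \ref{lem:formula} with $U=U(t)$, whose components are $U(t)_0=\on{id}$, $U(t)_j=tX_j$, $U(t)_{2j}=\tf{t^2}{2}X_j^2$, and $U(t)_m=0$ for other $m$. Lemma \ref{lem:formula} gives
\[ U(t)\cdot f^{(i)}=\sum_{m\ge 0}\big(U(t)_m(f)\big)^{(i-m)}=f^{(i)}+t\,(X_jf)^{(i-j)}+\tf{t^2}{2}(X_j^2 f)^{(i-2j)}+\cdots,\]
where terms with negative upper index are understood to vanish. Differentiating at $t=0$, the induced vector field acts on $f^{(i)}$ by $f^{(i)}\mapsto (X_jf)^{(i-j)}$. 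By the defining property of the vertical/tangent lifts, $X_j^{(-j)}$ is precisely the vector field on $T_rM$ characterized by $X_j^{(-j)}f^{(i)}=(X_jf)^{(i-j)}$ for all $f$ and $i$. Hence the infinitesimal generator of $U(t)$ is $X_j^{(-j)}$, which is exactly $\varrho(X_j\epsilon^j)$. Summing over $j$ yields $\varrho(X)=\sum_{j=0}^r X_j^{(-j)}$ in general.

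The main obstacle, such as it is, is bookkeeping rather than conceptual: one must be careful that $\varrho$ is indeed the derivative of the group action $U\cdot\su=\su\circ U^{-1}$ with the correct sign and that passing from the $U^{-1}$ in the action formula to the $U$ appearing in Lemma \ref{lem:formula} (the proof of that lemma already absorbs this inversion) is consistent. One should also confirm that a general element of $\on{Lie}(\mathfrak{U}_r)$ need not be nilpotent in an obvious way but that $X_j\epsilon^j$ is (as $\epsilon^{r+1}=0$ forces $\epsilon^{jk}=0$ once $jk>r$), so the exponential series terminates and $U(t)$ genuinely lies in $\mathfrak{U}_r$; and that linearity in $X$ is legitimate because the flows of the $X_j^{(-j)}$ for varying $j$ are being added only at the level of their infinitesimal generators, which is automatic. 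None of these points requires real work, so the proof is essentially the displayed computation together with an appeal to Lemma \ref{lem:formula} and the characterization of the lifts.
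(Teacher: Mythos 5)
Your proof is correct and follows essentially the same route as the paper, which simply takes ``the infinitesimal version of Lemma \ref{lem:formula}'' and identifies the result with $\sum_j X_j^{(-j)}$ via the defining property $X^{(-j)}f^{(i)}=(Xf)^{(i-j)}$; you merely make the differentiation explicit through one-parameter subgroups. The extra care about exponentials terminating and the sign bookkeeping is harmless but not needed beyond what the paper's one-line argument already contains.
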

\begin{proof}
The infinitesimal version of Lemma \ref{lem:formula} shows that 
\[ X\cdot f^{(i)}=\sum_{j=0}^r (X_jf)^{(i-j)}=\sum_{j=0}^r X_j^{(-j)}\cdot f^{(i)}.\]
\end{proof}

Let $\mathfrak{U}_r^-$ be the subgroup of all $U=\sum U_i \epsilon^i$ for which $U_0=\on{id}$. This group is unipotent (its elements satisfy $(U-\on{id})^{r+1}=0$);  
 its Lie algebra 
$\on{Lie}(\mathfrak{U}_r^-)=\mf{X}(M)\otimes \AA_r^-$  consists of all $X=\sum X_i\epsilon^i $ such that $X_0=0$.
Note that the action of $\mathfrak{U}_r^-$ preserves fibers; accordingly, the action of $\on{Lie}(\mathfrak{U}_r^-)$ is by vertical vector fields. 
\begin{corollary}
The vector fields $\varrho(X)$ 	for $X=\sum_{i=1}^r X_i\epsilon^i \in \on{Lie}(\mathfrak{U}_r^-) $ are complete. 	
\end{corollary}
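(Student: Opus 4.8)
The plan is to realize the flow of $\varrho(X)$ as the action on $T_rM$ of a globally defined one-parameter subgroup of $\mathfrak{U}_r^-$, namely $t\mapsto\exp(tX)$, and then appeal to the preceding lemma identifying the infinitesimal $\mathfrak{U}_r$-action with $\varrho$.

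First I would check that an element $X=\sum_{i=1}^r X_i\epsilon^i\in\on{Lie}(\mathfrak{U}_r^-)=\mf{X}(M)\otimes\AA_r^-$, viewed as an $\AA_r$-linear derivation of $C^\infty(M)\otimes\AA_r$, is nilpotent as a linear operator: each summand $X_i\epsilon^i$ strictly raises the $\epsilon$-degree (every $i\ge 1$) and $\epsilon^{r+1}=0$, so $X^{r+1}=0$. Hence the finite sum $\exp(tX)=\sum_{k=0}^r\tfrac{t^k}{k!}X^k$ is defined for every $t\in\R$; since $X$ is a derivation the usual binomial computation shows $\exp(tX)$ is an $\AA_r$-linear algebra endomorphism with inverse $\exp(-tX)$, and its $\epsilon^0$-component is $\on{id}$ (again because $X$ raises $\epsilon$-degree, so $X^k$ has no $\epsilon^0$-part for $k\ge 1$). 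Thus $\exp(tX)\in\mathfrak{U}_r^-$ and $t\mapsto\exp(tX)$ is a homomorphism $\R\to\mathfrak{U}_r^-$.

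Next I would form the resulting action on the $r$-th tangent bundle, $\Phi_t\colon T_rM\to T_rM$, $\Phi_t(\su)=\su\circ\exp(-tX)$. Writing $\exp(tX)=\sum_j V_j(t)\epsilon^j$ with each $V_j(t)$ a differential operator on $M$ depending polynomially on $t$, Lemma \ref{lem:formula} expresses $\Phi_t^*(f^{(i)})$ as an explicit $\R[t]$-combination of the lifts $(V_j(t)f)^{(i-j)}$; applied to the coordinate functions $x_a^{(i)}$ on $T_rU$ this shows $\Phi$ is a jointly smooth map $\R\times T_rM\to T_rM$. Because $t\mapsto\exp(tX)$ is a one-parameter group we get $\Phi_0=\on{id}$ and $\Phi_{t+s}=\Phi_t\circ\Phi_s$, so $\Phi$ is a global flow, and differentiating at $t=0$ and invoking the lemma giving the infinitesimal $\mathfrak{U}_r$-action as $\varrho$ identifies the generator of $\Phi$ with $\varrho(X)$ (up to an overall sign, which is irrelevant for completeness). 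Therefore $\varrho(X)$ is complete.

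There is no real obstacle here; the only points needing care are (i) that $X$ is genuinely nilpotent as an operator, so that $\exp(tX)$ is a bona fide finite expression lying in $\mathfrak{U}_r^-$ for \emph{all} $t$ rather than just small $t$, and (ii) smoothness of the flow, which reduces via Lemma \ref{lem:formula} to the polynomial dependence of $\Phi_t^*(x_a^{(i)})$ on $t$ and the base coordinates. Everything else is formal, and the identification of the generator with $\varrho(X)$ is exactly the content of the lemma proved just above.
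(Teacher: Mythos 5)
Your proposal is correct and follows essentially the same route as the paper: both arguments observe that $X^{r+1}=0$ as an operator on $C^\infty(M)\otimes\AA_r$, so $t\mapsto\exp(tX)$ is a globally defined one-parameter subgroup of $\mathfrak{U}_r^-$ whose action on $T_rM$ gives the flow of $\varrho(X)$. Your version merely spells out the details (nilpotency, membership in $\mathfrak{U}_r^-$, smoothness, identification of the generator) that the paper leaves implicit.
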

\begin{proof}
	Since $X^{r+1}=0$ as an operator on $C^\infty(M)\otimes \AA_r$, the 1-parameter group 
	$t\mapsto U(t)=\exp(tX)\in \mf{U}_r^-$ is well-defined. Its action on $T_rM$ is a 1-parameter group of diffeomorphisms of $T_rM$, giving the flow of $X$. 
\end{proof}
Since the group $\mathfrak{U}_r^-$ preserves fibers of $T_rM$, it acts on the space $\Gamma(T_rM)$ of sections. This space has a base point given by the `zero section'   $\on{ev}\colon M\to T_rM,\ m\mapsto \on{ev}_m$. 
 \begin{lemma}
 The action of $\mathfrak{U}_r^-$ on the space of sections of $T_rM$ is free and transitive. Its application to the zero section  hence gives a bijection
 \[ \mathfrak{U}_r^-\to \Gamma(T_rM).\]
 Similarly, the map $X\mapsto \varrho(X)|_M\mod TM$ gives an isomorphism 
 \[ \on{Lie}(\mathfrak{U}_r^-)\to \Gamma((T_rM)_\lin).\]
 \end{lemma}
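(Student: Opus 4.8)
The plan is to make the dictionary between sections of $T_rM\to M$ and elements of $\mathfrak{U}_r^-$ completely explicit, and then to read off all three assertions from it. First I would check that giving a section $s\colon M\to T_rM$ is the same as giving an algebra morphism $\sigma\colon C^\infty(M)\to C^\infty(M)\otimes\AA_r$ with $\sigma(f)\equiv f\bmod\epsilon$, namely $\sigma(f)=\sum_{i=0}^r(s^*f^{(i)})\,\epsilon^i$; this is multiplicative because $f\mapsto T_rf$ is, and it reduces to the identity modulo $\epsilon$ precisely because $s$ is a section. Extending $\sigma$ $\AA_r$-linearly yields an $\AA_r$-algebra endomorphism $\widetilde\sigma$ of $C^\infty(M)\otimes\AA_r$ whose degree-zero component is $\on{id}$; by the invertibility criterion recalled above, $\widetilde\sigma$ is then automatically an automorphism, i.e.\ $\widetilde\sigma\in\mathfrak{U}_r^-$. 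Conversely, restricting any $U\in\mathfrak{U}_r^-$ along $C^\infty(M)\hookrightarrow C^\infty(M)\otimes\AA_r$ recovers such a $\sigma$, and the two assignments are mutually inverse; under the resulting bijection $\mathfrak{U}_r^-\leftrightarrow\Gamma(T_rM)$ of sets, the zero section $\on{ev}$ corresponds to $\on{id}$.

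Next I would transport the $\mathfrak{U}_r^-$-action through this dictionary. Writing the value $s(m)$, extended $\AA_r$-linearly, as $\on{ev}_m\circ\widetilde\sigma$ (evaluation at $m$ being $\AA_r$-linear), the rule $U\cdot s=s\circ U^{-1}$ shows that $U\cdot s$ is the section with associated endomorphism $\widetilde\sigma\circ U^{-1}$; this is the same bookkeeping as in the proof of Lemma~\ref{lem:formula}. Transitivity is then immediate, since $U\cdot\on{ev}$ has associated endomorphism $U^{-1}$ and $U\mapsto U^{-1}$ is a bijection of $\mathfrak{U}_r^-$, so every section lies in the orbit of $\on{ev}$. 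For freeness, $U\cdot s=s$ forces $\widetilde\sigma\circ U^{-1}=\widetilde\sigma$, hence $U=\on{id}$ since $\widetilde\sigma$ is invertible. Thus the action on $\Gamma(T_rM)$ is free and transitive, and the orbit map $U\mapsto U\cdot\on{ev}$ is the claimed bijection $\mathfrak{U}_r^-\to\Gamma(T_rM)$.

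For the infinitesimal statement I would note that $X\mapsto\varrho(X)|_M\bmod TM$ is the infinitesimal counterpart of the bijection just constructed, and then verify directly in a chart that it is an isomorphism. Since $\varrho(X)=\sum_{i\ge1}X_i^{(-i)}$ is vertical for $X=\sum_{i\ge1}X_i\epsilon^i\in\on{Lie}(\mathfrak{U}_r^-)$, its restriction to the zero section is a section of the vertical subbundle along $M$, which maps isomorphically onto $\nu(T_rM,M)=(T_rM)_\lin$; so the map does take values in $\Gamma((T_rM)_\lin)$. In local coordinates $x_a$ on $U$, with induced fiber coordinates $x_a^{(i)}$ on $T_rU$, one has $(\p/\p x_a)^{(-j)}=\p/\p x_a^{(j)}$, while the second identity in \eqref{eqLiftproperties} together with $f^{(k)}|_M=0$ for $k\ge1$ gives $(fX)^{(-j)}|_M=f\,X^{(-j)}|_M$. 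Hence $\varrho$ carries the local frame $\{\epsilon^j\,\p/\p x_a\}_{1\le j\le r,\ 1\le a\le n}$ of $\on{Lie}(\mathfrak{U}_r^-)$ to the local frame $\{(\p/\p x_a^{(j)})|_M\bmod TM\}$ of $(T_rM)_\lin$, so the map is a fiberwise linear isomorphism, hence the asserted isomorphism $\on{Lie}(\mathfrak{U}_r^-)\to\Gamma((T_rM)_\lin)$.

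The one step needing genuine care is the translation of the action: keeping the composition order straight, and checking that the $\AA_r$-algebra endomorphism attached to an \emph{arbitrary} section is invertible rather than merely an endomorphism --- which is precisely where the condition that its degree-zero component equals $\on{id}$ is used. Everything else is routine unwinding of the algebraic definitions.
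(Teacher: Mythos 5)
Your proof is correct, but for the main (torsor) assertion it takes a genuinely different route from the paper. The paper argues by induction along the tower $\Gamma(T_rM)\to\Gamma(T_{r-1}M)\to\cdots$, invoking Lemma \ref{lem:ker}: the kernel $\mf{X}(M)$ of $\mathfrak{U}_r^-\to\mathfrak{U}_{r-1}^-$ acts freely and transitively on each fiber of $\Gamma(T_rM)\to\Gamma(T_{r-1}M)$, and the statement for $\mathfrak{U}_r^-$ follows by climbing the tower. You instead identify $\Gamma(T_rM)$ outright with $\mathfrak{U}_r^-$ as a set: a section is the same datum as an $\AA_r$-algebra endomorphism of $C^\infty(M)\otimes\AA_r$ whose degree-zero part is the identity, which is then automatically invertible; under this dictionary the action $U\cdot\su=\su\circ U^{-1}$ becomes right translation, so freeness and transitivity are immediate. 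Your identification is sharper --- it exhibits $\Gamma(T_rM)$ as a trivial torsor in one step and makes the group structure on sections explicit --- at the cost of having to verify the dictionary carefully (multiplicativity of $\sigma$, invertibility via the degree-zero criterion, and smoothness of the section attached to a given $U$, which you handle correctly and which is also implicit in the paper's induction). For the second assertion, your argument via verticality of $\varrho(X)$, the identity $(fX)^{(-j)}|_M=f\,X^{(-j)}|_M$, and the coordinate frames $\partial/\partial x_a^{(j)}$ is essentially the paper's own coordinate argument and the content of the remark following the lemma.
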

 \begin{proof}	
 Recall from Lemma \ref{lem:ker} that the kernel of the map $\mathfrak{U}_r^-\to \mathfrak{U}_{r-1}^-$	
 consists of elements $\on{id}+X\epsilon^r$ where $X$ is a vector field.  
 Its action on the fibers of $\Gamma(T_rM)\to \Gamma(T_{r-1}M)$ is free 
 and transitive. By induction, this implies that the action of $\mathfrak{U}_r^-$ on the fibers of $\Gamma(T_rM)\to \Gamma(T_0M)=\{0_M\}$ is free and transitive. 
 
 For the second part, recall $(T_rM)_\lin=\nu(T_rM,M)$. The map $\on{Lie}(\mathfrak{U}_r)\to \Gamma(T(T_rM)|_M),\ X\mapsto \varrho(X)$ is a bijection, as is immediate from the coordinate description (and also from the result for $\mathfrak{U}_r$). It restricts to a bijection $\mf{X}(M)\otimes \R\subset \on{Lie}(\mathfrak{U}_r)$ to $\Gamma(TM)=\Gamma(T(T_0M)|_M)$, and hence 
 descends to a bijection $\on{Lie}(\mathfrak{U}_r^-)\to \Gamma((T_rM)_\lin)$
 \end{proof}

\begin{remark}
The proof gives a bijection 
\[ \on{Lie}(\mathfrak{U}_r)=\Gamma(TM\otimes \AA_r)\to \Gamma(T(T_rM)|_M),\ X\mapsto \varrho(X)|_M.\]
%
One readily checks that this map is $C^\infty(M)$-bilinear, and hence gives isomorphisms of vector bundles 
$TM\otimes \AA_r\to T(T_rM)|_M$ and $TM\otimes \AA_r^-\to (T_rM)_\lin$.
\end{remark}


Finally, let us note the following fact. 
\begin{proposition}
The tangent action of $\mathfrak{U}_r$ on $T(T_rM)$ commutes with the Koszul action of the algebra $\AA_r$.  	
\end{proposition}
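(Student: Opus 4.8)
The plan is to work with the description of $T(T_rM)$ as the space of $\AA_r$-linear algebra homomorphisms $C^\infty(M)\otimes\AA_r\otimes\AA_1\to\AA_r\otimes\AA_1$, regarding the $\AA_1$-factor as the one that records the tangent direction. Concretely, identifying $T(T_rM)=\Hom_\alg(C^\infty(M),\AA_1\otimes\AA_r)$ as in the Koszul subsection, an element $U\in\mathfrak U_r=\Aut_{\AA_r-\alg}(C^\infty(M)\otimes\AA_r)$ acts on $T(T_rM)$ by pre-composition with $U^{-1}$ after extending scalars along $\AA_r\hookrightarrow\AA_1\otimes\AA_r$; that is, $U\cdot\tau=\tau\circ(\on{id}_{\AA_1}\otimes U)^{-1}$ for $\tau\in T(T_rM)$. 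On the other side, the Koszul action of $x\in\AA_r$ is post-composition with the $\AA_1\otimes\AA_r$-module (not algebra) endomorphism $\mu_x$ of $\AA_1\otimes\AA_r$ defined by $\mu_x(1\otimes y+\epsilon\otimes z)=1\otimes y+\epsilon\otimes xz$.

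The key computation is then purely formal: for $\tau\in T(T_rM)$, $x\in\AA_r$, $U\in\mathfrak U_r$,
\[
x\cdot(U\cdot\tau)=\mu_x\circ\tau\circ(\on{id}_{\AA_1}\otimes U)^{-1},\qquad
U\cdot(x\cdot\tau)=\mu_x\circ\tau\circ(\on{id}_{\AA_1}\otimes U)^{-1},
\]
so the two agree provided $\mu_x$ commutes with the endomorphism $\on{id}_{\AA_1}\otimes U$ of $\AA_1\otimes\AA_r$. Thus everything reduces to the following linear-algebra fact in $\End(\AA_1\otimes\AA_r)$: for every $\AA_r$-algebra automorphism $U$ of $C^\infty(M)\otimes\AA_r$ — equivalently, since $\mu_x$ and $\on{id}\otimes U$ only involve the $\AA_r$- and $\AA_1$-module structures, for the induced $\AA_r$-linear map on $\AA_1\otimes\AA_r$ — one has $[\mu_x,\on{id}_{\AA_1}\otimes U]=0$. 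First I would verify this on the basis: writing a general element as $1\otimes y+\epsilon\otimes z$ with $y,z\in\AA_r$, we compute $(\on{id}\otimes U)(1\otimes y+\epsilon\otimes z)=1\otimes U_0 y+\epsilon\otimes(\text{stuff})$ where the $\epsilon$-component again lies in $\AA_r$; applying $\mu_x$ multiplies the $\epsilon$-component by $x$; whereas applying $\mu_x$ first and then $\on{id}\otimes U$ multiplies the $\epsilon$-component by $x$ and then applies $U_0$-type operators — here one uses that multiplication by $x\in\AA_r$ is $\AA_r$-linear, hence commutes with any $\AA_r$-linear operator acting on the $\epsilon\otimes\AA_r$ slot. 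So the statement is really just: scalar multiplication by $x$ on the $\AA_r$-module $\AA_r$ commutes with $\AA_r$-linear endomorphisms, which is tautological.

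The main obstacle is bookkeeping rather than mathematical depth: one must set up the identification $T(T_rM)=\Hom_\alg(C^\infty(M),\AA_1\otimes\AA_r)$ carefully, be precise about which tensor factor carries the $\mathfrak U_r$-action and which carries the Koszul $\AA_r$-action, and check that the $\mathfrak U_r$-action on $T(T_rM)$ is indeed given by the extension-of-scalars formula above (this follows from the definition $U\cdot\su=\su\circ U^{-1}$ on $T_rM$ together with the tangent functor, but it is worth spelling out, perhaps via the coordinate formulas for $X^{(0)}$ and $X^{(-j)}$ in the Lifts subsection, or via Lemma~\ref{lem:formula}). An alternative, and perhaps cleaner, route is infinitesimal plus a connectedness/density argument: it suffices to check that the Koszul action commutes with $\varrho(\on{Lie}(\mathfrak U_r))$, i.e. that $\epsilon\cdot[\varrho(X),\cdot]=[\varrho(X),\epsilon\cdot(\cdot)]$ as operators on $\mf X(T_rM)$; but $\epsilon\cdot X^{(-i)}=X^{(-i-1)}$ by the Koszul proposition and $[\varrho(Y),X^{(-i)}]$ can be computed from $[Y_j^{(-j)},X^{(-i)}]=[Y_j,X]^{(-i-j)}$ using \eqref{eqLiftproperties}, after which $\epsilon\cdot[Y_j,X]^{(-i-j)}=[Y_j,X]^{(-i-j-1)}$ on both sides. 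Since $\mathfrak U_r$ is generated (as needed here) by $\on{Diff}(M)$ together with the exponentials of $\on{Lie}(\mathfrak U_r^-)$ — the latter giving honest one-parameter flows by the Corollary above — the infinitesimal statement plus the obvious $\on{Diff}(M)$-equivariance of the Koszul action upgrades to the group statement. I would present the direct algebraic proof as the main argument and mention the infinitesimal one as a remark, since the former is shorter once the identifications are in place.
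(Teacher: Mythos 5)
Your proposal is correct and is essentially the paper's own proof: both rest on the identification $T(T_rM)=\Hom_{\AA_r-\alg}(C^\infty(M)\otimes\AA_r,\AA_1\otimes\AA_r)$, under which the $\mathfrak{U}_r$-action is pre-composition by automorphisms of the source while the Koszul action is post-composition by endomorphisms of the target, so they commute by associativity of composition. Note that your two displayed expressions are already literally identical, so the subsequent ``proviso'' $[\mu_x,\on{id}_{\AA_1}\otimes U]=0$ is vacuous (and slightly confused, since $\mu_x$ and $U$ act on different spaces); the argument is complete without it.
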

\begin{proof}
	This follows from the description 
\[ T(T_rM)=\Hom_{\AA_r-\alg}(C^\infty(M)\otimes \AA_r,\AA_1\otimes \AA_r)\]	
since the $\mathfrak{U}_r$-action is defined by $\AA_r-\alg$ automorphisms of $C^\infty(M)\otimes \AA_r$ while the Koszul action is defined by $\AA_r-\alg$ homomorphisms of $\AA_1\otimes \AA_r$. 
\end{proof}

\subsection{Weightings in terms of $T_rM$}
The description of weightings in terms of the $r$-th tangent bundle is as follows. 

\begin{theorem}\cite{loi:wei} Given an order $r$ weighting of $M$ along $N$, there is a unique graded subbundle $Q\subset T_rM$ along $N\subset M$, with the property that for all $0<i\le r$, 
\begin{equation}\label{eq:filtration_fromq}	C^\infty(M)_{(i)}=\{f\colon \ f^{(i-1)}|_Q=0\}.\end{equation}
(The ideals for $i>r$ are determined by \eqref{eq:extra}.) 	
The non-positive part of the filtration on vector fields is described in terms of $Q$ as 
\begin{equation}\label{eq:vf_filtration}
 \mf{X}(M)_{(-j)}=\{X\colon X^{(-j)}\mbox{  is tangent to $Q$ }\},\end{equation}
for $j=0,\ldots,r$. The weighted normal bundle $\nu_\W(M,N)$  is the quotient of 
$Q$ under the equivalence relation 
\[ q_1\sim q_2 \Leftrightarrow
\forall   f\in C^\infty(M)_{(i)}\colon f^{(i)}(q_1)=f^{(i)}(q_2).\]
\end{theorem}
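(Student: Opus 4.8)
The plan is to construct the subbundle $Q$ explicitly in weighted coordinates, to check that this construction is independent of the chart, and then to read off the three characterizations in turn. Fix a weighted coordinate chart $(x_1,\dots,x_n)$ on an open set $U$, with weights $w_1,\dots,w_n$ ordered increasingly, so that $N\cap U=\{x_a=0\colon w_a>0\}$ and $k_0=\dim N=\#\{a\colon w_a=0\}$. First I would set
\[ Q|_U=\{\su\in T_rM|_{N\cap U}\ \colon\ x_a^{(j)}(\su)=0\ \text{ for all }a\text{ and all }0\le j<w_a\}.\]
This is a graded subbundle of $T_rM|_{N\cap U}$ because $\kappa_t$ rescales $x_a^{(j)}$ by $t^j$; in the jet picture it consists of the $r$-jets of curves $\gamma$ through $N$ with $x_a\circ\gamma$ vanishing to order at least $w_a$ at $t=0$ for each $a$. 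To patch the local pieces I would prove the elementary identity $(x^s)^{(l)}|_{Q|_U}=0$ whenever $l<s\cdot w$: expanding $(x^s)^{(l)}$ as a sum of products of factors $x_{a_\nu}^{(l_\nu)}$ with the $l_\nu$ summing to $l$, the defining equations of $Q|_U$ force each factor to have $l_\nu\ge w_{a_\nu}$, hence $l\ge s\cdot w$. If $(\wt{x}_a)$ is a second weighted chart, ordered so that $\wt{x}_a$ has weight $w_a$, then $\wt{x}_a$ lies in the intrinsic ideal $C^\infty(U)_{(w_a)}$, so $\wt{x}_a=\sum_s g_s x^s$ with $s\cdot w\ge w_a$; combining this with the Leibniz rule $(gh)^{(k)}=\sum_{i+j=k}g^{(i)}h^{(j)}$ for lifts and the identity above gives $\wt{x}_a^{(j)}|_{Q|_U}=0$ for $j<w_a$, so $Q|_U\subseteq\wt{Q}|_U$, and by symmetry the two coincide. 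Thus the local pieces glue to a well-defined graded subbundle $Q\subset T_rM$ along $N$.

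Next I would verify the function filtration \eqref{eq:filtration_fromq}. The inclusion from left to right is again the identity $(x^s)^{(l)}|_Q=0$ for $l<s\cdot w$: if $f\in C^\infty(U)_{(i)}$ then $f=\sum_s g_s x^s$ with $s\cdot w\ge i$, and each $f^{(j)}$ with $j<i$ is a $C^\infty$-combination of functions $(x^s)^{(l)}$ having $l\le j<i\le s\cdot w$, hence vanishes on $Q$. For the opposite inclusion I would test $f$ against the curves $\gamma$ with $x_a\circ\gamma$ constant for $w_a=0$ and $x_b\circ\gamma(t)=c_bt^{w_b}$ for $w_b>0$, whose $r$-jets lie in $Q$: Taylor-expanding $f$ in the $x_b$-directions ($b>k_0$) with coefficients $c_s$ depending only on $x_1,\dots,x_{k_0}$ gives $f(\gamma(t))=\sum_s c_s(m)\,c^s\,t^{s\cdot w}+O(t^{r+1})$ at a base point $m\in N\cap U$, so that the vanishing of all jets $f^{(j)}|_Q$ with $j<i$ forces $c_s\equiv0$ whenever $s\cdot w<i$, i.e.\ $f\in C^\infty(U)_{(i)}$. (This argument really uses the conjunction of the conditions $f^{(j)}|_Q=0$ for $j\le i-1$, as one sees already for a single positive weight $r>1$; given the nesting $C^\infty(M)_{(i)}\subseteq C^\infty(M)_{(i-1)}\subseteq\cdots$, formula \eqref{eq:filtration_fromq} is to be understood with all these conditions imposed.) Uniqueness of $Q$ then follows because in any weighted chart the functions $\{f^{(i-1)}\colon f\in C^\infty(M)_{(i)},\ 1\le i\le r\}$ include all the $x_a^{(j)}$ with $j<w_a$, and these cut out precisely $Q|_U$.

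The other two assertions are then formal consequences. For the vector field filtration, write $X=\sum_b\phi_b\,\partial/\partial x_b$; since $X^{(-j)}f^{(i)}=(Xf)^{(i-j)}$ we get $X^{(-j)}(x_a^{(l)})=\phi_a^{(l-j)}$, so $X^{(-j)}$ is tangent to $Q$ if and only if $\phi_a^{(m)}|_Q=0$ for all $a$ and all $m<w_a-j$, which by the previous step means $\phi_a\in C^\infty(M)_{(w_a-j)}$ for every $a$, i.e.\ $X\in\mf{X}(M)_{(-j)}$. For the weighted normal bundle, each $\su\in Q_m$ defines an algebra morphism $\mathsf{A}_m=\on{gr}(C^\infty_M)_m\to\R$ by sending the class of $f\in C^\infty(M)_{(i)}$ to $f^{(i)}(\su)$: this is well defined since $f^{(i)}|_Q=0$ for $f\in C^\infty(M)_{(i+1)}$, and multiplicative since $(fg)^{(i+i')}(\su)=\sum_{j+j'=i+i'}f^{(j)}(\su)g^{(j')}(\su)$ and all off-diagonal terms vanish on $Q$. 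This produces a map $Q_m\to\Hom_\alg(\mathsf{A}_m,\R)=\nu_\W(M,N)_m$ that is constant on $\sim$-classes and, in weighted coordinates, is the assignment $\su\mapsto(x_b^{(w_b)}(\su))_{w_b>0}$; hence it descends to a fibrewise bijection $Q/\!\sim\,\to\nu_\W(M,N)$, and checking in coordinates that it is a diffeomorphism intertwining the $(\R,\cdot)$-actions identifies the two as graded bundles.

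The hard part will be the right-to-left inclusion in \eqref{eq:filtration_fromq}: extracting the weighted order of vanishing of $f$ along $N$ from the vanishing of the finitely many jet functions $f^{(j)}|_Q$. This is where the rich supply of curves $t\mapsto c_bt^{w_b}$ inside $Q$ is genuinely used, and where one must organize the Taylor expansion of $f$ in the weighted directions and keep track of which Taylor coefficient is detected at which order in $t$; once this is done, the gluing argument, the vector field description, and the identification of $\nu_\W(M,N)$ are all bookkeeping layered on top.
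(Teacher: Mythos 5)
The paper itself does not prove this theorem --- it is quoted from \cite{loi:wei} --- but your construction of $Q$ agrees exactly with the coordinate description the paper records immediately after the statement ($Q$ cut out by the $x_a^{(i)}$ with $w_a>i$), and the bulk of your argument is sound: the identity $(x^s)^{(l)}|_Q=0$ for $l<s\cdot w$, the chart-independence, the left-to-right inclusion in \eqref{eq:filtration_fromq}, the detection of the weighted Taylor coefficients by the curves $t\mapsto c_b t^{w_b}$, and the identification of $Q/\!\sim$ with $\Hom_\alg(\mathsf{A}_m,\R)$ are all correct. You are also right that \eqref{eq:filtration_fromq} must be read cumulatively ($f^{(j)}|_Q=0$ for all $j<i$); as written it fails already for $f=1$, $i=2$. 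The one step you leave implicit is the weighted Hadamard lemma --- that vanishing along $N$ of all transverse Taylor coefficients $c_s$ with $s\cdot w<i$ puts $f$ in the ideal generated by monomials with $s\cdot w\ge i$ --- but this follows from Taylor's theorem with remainder at order $|s|=i$, since each positive weight is at least $1$ and so $|s|\ge i$ forces $s\cdot w\ge i$.

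The genuine gap is in the uniqueness claim. Your argument shows that any graded subbundle $Q'$ satisfying \eqref{eq:filtration_fromq} lies in the common zero set of the functions $x_a^{(j)}$, $j<w_a$, hence $Q'\subseteq Q$; it does not show $Q\subseteq Q'$, and uniqueness among arbitrary graded subbundles is in fact false. Take $M=\R^2$ with weights $w_1=1$, $w_2=2$, $N=\{0\}$, $r=2$, and $Q'=Q\cap\{x_1^{(2)}=0\}$. This is a proper graded subbundle of $Q$, yet for $j\le 1$ the restrictions $f^{(0)}|_Q=f(0)$ and $f^{(1)}|_Q=\partial_1 f(0)\,x_1^{(1)}$ do not involve the coordinate $x_1^{(2)}$, so $f^{(j)}|_{Q'}=0$ if and only if $f^{(j)}|_Q=0$, and $Q'$ satisfies \eqref{eq:filtration_fromq} for all $0<i\le r$. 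To single out $Q$ one must either invoke the extra normalizations the paper lists right after the theorem ($Q$ is $TM$-invariant, i.e.\ a preimage from $T_{r-1}M$, and $TQ$ is invariant under the Koszul action), or restate uniqueness as ``$Q$ is the \emph{largest} graded subbundle with property \eqref{eq:filtration_fromq}'' --- which is precisely what your containment argument proves. Either repair is a one-line addition, but as written the uniqueness step is incomplete.
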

In local weighted coordinates
$x_a\in C^\infty(U)$, the submanifold $Q$ is described by the vanishing of all coordinates $x_a^{(i)}\in C^\infty(T_rU)$ such that $w_a>i$.  The quotient map forgets the coordinates for which $w_a<i$, while $x_a^{(w_a)}$ descend to the coordinates 
$x_a^{[w_a]}$ on the weighted normal bundle. The vertical bundle of the fibration $Q \rightarrow \nu_\W(M,N)$ is $\epsilon \cdot (TQ)\subset TQ$, where the dot indicates the Koszul action.

\begin{remark}
Let $Q_i\subset T_iM$ defined as pre-images of $Q$ under $T_iM\to T_rM$ 
if $i>r$  and as images under $T_rM\to T_iM$ if $i<r$. This gives a tower of graded bundles 
\[ \cdots Q_{r+1}\to Q_r\to Q_{r-1}\to \cdots Q_0=N,\] 
and $C^\infty(M)_{(i)}$ may be described for \emph{all} $i$ as the functions for which $f^{(i-1)}$ vanishes on $Q_{i-1}$.
\end{remark}


%
Not every graded subbundle $Q\subset T_rM$ arises from a weighting. One necessary condition is that the tangent bundle 
$TQ$ must be invariant under the Koszul $\AA_r$-action (whenever  
$X^{(-j)}$ is tangent to $Q$ then so is $X^{(-j-1)}$). Further, by our conventions $Q$ must be the pre-image of a subbundle $Q'\subset T_{r-1}M$, i.e., it must be $TM$-invariant.

\begin{theorem}\cite{loi:wei} 
Suppose $Q\subset T_rM$ is a graded subbundle, invariant under the action of $TM$ and such that $TQ$ is invariant under the Koszul action. Then $Q$ comes from an order $r$ weighting if and only if the subgroup $(\mathfrak{U}_r)_Q$ preserving $Q$ acts locally transitively on $Q$. 
\end{theorem}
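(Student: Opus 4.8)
The plan is to prove the two implications separately, each time reducing to the explicit local picture of $Q$ in weighted coordinates $x_a$ over $U$: there $Q\cap T_rU$ is the common zero locus of the functions $x_a^{(i)}$ with $w_a>i$, and $T_qQ$ is spanned by the coordinate vector fields $\partial/\partial x_a^{(i)}=(\partial/\partial x_a)^{(-i)}$ with $w_a\le i$. I will freely use the descriptions, recalled above, of the filtrations attached to a weighting in terms of its graded subbundle $Q$.

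For the forward implication, assume $Q$ comes from an order $r$ weighting. I would produce enough one-parameter subgroups of $(\mathfrak{U}_r)_Q$ so that their velocities span $T_qQ$ at every $q\in Q$. For $j\ge 1$ and $X\in\mf{X}(M)_{(-j)}$, the vector field $X^{(-j)}=\varrho(X\epsilon^j)$ lies in $\on{Lie}(\mathfrak{U}_r^-)$, hence is complete, and by the characterization $\mf{X}(M)_{(-j)}=\{X:X^{(-j)}\text{ tangent to }Q\}$ its flow preserves $Q$, so it defines a one-parameter subgroup of $(\mathfrak{U}_r^-)_Q\subseteq(\mathfrak{U}_r)_Q$. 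For $X\in\mf{X}(M)_{(0)}$ that is complete — for instance $X=\partial/\partial x_a$ with $w_a=0$ in a weighted chart, whose flow consists of translations and hence of weighting automorphisms — the flow of the tangent lift $T_rX=X^{(0)}$ lies in $(\mathfrak{U}_r)_Q$, the underlying diffeomorphisms preserving the weighting and therefore $Q$. Since $\partial/\partial x_a$ has filtration degree $-w_a$, the vertical-lift construction applied to $X=\partial/\partial x_a$ realizes the coordinate vector fields $\partial/\partial x_a^{(i)}$ of $Q$ with $i\ge 1$ (here $\partial/\partial x_a\in\mf{X}(M)_{(-i)}$ precisely because $w_a\le i$), while the tangent-lift construction applied to $\partial/\partial x_a$ with $w_a=0$ supplies the remaining directions $\partial/\partial x_a^{(0)}$. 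Thus already these one-parameter subgroups act with open orbit through every point of $Q$: local transitivity.

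For the converse, assume $(\mathfrak{U}_r)_Q$ acts locally transitively on $Q$. Put $N=\kappa_0(Q)=\pi(Q)$, a submanifold of $M$ since $Q$ is a graded subbundle. Define a filtration of $C^\infty_M$ by declaring $f\in C^\infty_{M,(i)}$ if and only if $f^{(j)}|_Q=0$ for all $j<i$ (equivalently \eqref{eq:filtration_fromq}, extended past $i=r$ by means of the tower $Q_i$), and a filtration of $\mf{X}_M$ by $\mf{X}(M)_{(-j)}=\{X:X^{(-j)}\text{ tangent to }Q\}$. Apart from the existence of weighted coordinates, all requirements of a weighting follow readily: $C^\infty_{M,(1)}$ is the vanishing ideal of $N$, since $f^{(0)}|_Q=0$ iff $f|_N=0$; both filtrations are decreasing because $TQ$ is Koszul-invariant; multiplicativity holds because $T_r$ is an algebra homomorphism, so that $(fg)^{(k)}=\sum_{a+b=k}f^{(a)}g^{(b)}$; the filtration of $\mf{X}_M$ is by $C^\infty_M$-submodules by the identity $(gX)^{(-j)}=\sum_k g^{(k)}X^{(-j-k)}$ together with Koszul-invariance; and it is compatible with brackets, since $[X^{(-i)},Y^{(-j)}]=[X,Y]^{(-i-j)}$ and the Lie bracket of two vector fields tangent to $Q$ is tangent to $Q$.

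What remains, and this is the heart of the matter and the only place local transitivity enters, is the existence of local weighted coordinates near a point $m\in N$. From the graded vector bundle $Q_\lin=\nu(Q,N)$ — a subbundle of $(T_rM)_\lin|_N=TM|_N\otimes\AA_r^-$ whose homogeneous components assemble, thanks to $TM$- and Koszul-invariance, into a flag $TM|_N=\wt F_{-r}\supseteq\cdots\supseteq\wt F_0=TN$ by subbundles as in \eqref{eq:filtration_tm} — one reads off the weight sequence $w$. One then has to show that, after applying a suitable element of $\mathfrak{U}_r$ defined over a neighbourhood of $m$, the pair $(T_rM,Q)$ becomes isomorphic to its linearization $(T_rM,Q_\lin)$, which is precisely the graded subbundle attached to the genuine (graded-vector-bundle) weighting with weight sequence $w$. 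Concretely, I would carry this out by an inductive normal-form argument in the spirit of the proof of Theorem~\ref{th:mainresult}: beginning from coordinates adapted to the flag $\wt F_\bullet$, one adjusts the next batch of coordinates so that they acquire the correct filtration degree, the required adjustment at each stage being provided by an element of $(\mathfrak{U}_r)_Q$; the obstruction one would otherwise meet is measured exactly by infinitesimal automorphisms of the pair $(T_rM,Q)$ that are not tangent to $Q$, and local transitivity says that none survives. Once weighted coordinates are in hand, the filtration of $C^\infty_M$ above is the monomial filtration in them, so it is a weighting of order $r$; by the uniqueness clause in the description of $Q$ from a weighting, the graded subbundle attached to this weighting is $Q$ itself, and \eqref{eq:filtration_fromq} identifies its normal-bundle filtration with \eqref{eq:filtration_nu}. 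The main obstacle is thus this straightening step — converting the abstract local transitivity of $(\mathfrak{U}_r)_Q$ into concrete weighted coordinates — which amounts to arranging the group action so as to remove the successive filtration-degree obstructions.
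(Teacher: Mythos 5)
A preliminary caveat: the paper itself does not prove this theorem --- it is imported from \cite{loi:wei} --- so the only in-paper material to compare against is the remark immediately following the statement (that local transitivity is equivalent to $TQ$ being spanned by the homogeneous lifts $X^{(-i)}$ that are tangent to $Q$) and the coordinate construction in the proof of Theorem~\ref{th:mainresult}, which is the template for the hard step. Your forward implication is essentially correct: the flows of complete vector fields tangent to the closed submanifold $Q$ preserve $Q$, and the lifts $(\partial/\partial x_a)^{(-j)}=\partial/\partial x_a^{(j)}$ for $j\ge w_a$ do span $T_qQ$. The one repair needed is that $\partial/\partial x_a$ is only chart-defined, so its flow is not an element of $\mathfrak{U}_r$; replace it by $g\,\partial/\partial x_a$ with $g$ a compactly supported bump equal to $1$ near the point. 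This is still of filtration degree $-w_a$ (the filtration on vector fields is by $C^\infty_M$-submodules), is complete, and its lifts are tangent to $Q$, so its flows lie in $(\mathfrak{U}_r)_Q$ and the velocities still span $T_qQ$ near the given point.

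The converse, however, contains a genuine gap, and it is exactly where you say the ``heart of the matter'' lies. Two concrete points. First, local transitivity only gives that $T_qQ$ is spanned by velocities $\varrho(X)|_q=\sum_j X_j^{(-j)}|_q$ for $X\in\on{Lie}\bigl((\mathfrak{U}_r)_Q\bigr)$, i.e.\ by \emph{inhomogeneous} sums tangent to $Q$; to run any coordinate construction one must first extract a frame of individual homogeneous lifts $V_a^{(-i)}$ tangent to $Q$. This reduction uses that $Q$ is $\kappa$-invariant: conjugating a tangent sum $\sum_j X_j^{(-j)}$ by $\kappa_t$ rescales the $j$-th term by $t^{-j}$, and a Vandermonde argument then shows each homogeneous component is separately tangent to $Q$. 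You never perform this step, yet it is precisely the equivalence recorded in the paper's remark after the theorem. Second, the actual straightening --- producing coordinates $x_a$ with $x_a^{(i)}|_Q=0$ for $i<w_a$, so that \eqref{eq:filtration_fromq} becomes the monomial filtration --- is only announced: you say the adjustment at each stage is ``provided by an element of $(\mathfrak{U}_r)_Q$'' and that the obstruction is killed by local transitivity, but you do not exhibit the adjustment or explain why the recursive system it must solve is solvable. This is the analogue of the recursion for the coefficients $\chi_{as}$, with the nonvanishing normalizing constants $c_s=V^sx^s|_N$, in the proof of Theorem~\ref{th:mainresult}, and without it the converse is an outline rather than a proof. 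The bookkeeping you do carry out (multiplicativity of the filtration, $C^\infty_M$-module and bracket properties of the vector field filtration) is correct but is the easy part.
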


Another way of putting the last condition (and indeed the way it was formulated in \cite{loi:wei}) is that $TQ$ is spanned by the collection of all lifts $X^{(-i)}$, for $i=0,\ldots,r$ and $X\in\mf{X}(M)$, with the property that $X^{(-i)}$ is tangent to $Q$.


\subsection{Singular Lie filtrations as singular foliations of $T_rM$}
We shall now turn to the interpretation of singular Lie filtrations in terms of singular foliations of higher tangent bundles. 
Observe that a singular Lie filtration $\mf{X}_M=\H_{-r}\supset\cdots \supset \H_{0}$ 
determines a graded Lie subalgebra  
\[ \on{Lie}(\mathfrak{U}_{r,\H})\subset \on{Lie}(\mathfrak{U}_r)\subset \mf{X}(M)\otimes \AA_r^,\] 
consisting of all $X=\sum_{j=0}^r X_j \epsilon^j$ with $X_j\in \H_{-j}$. 
It follows that 
\[ \D_\H(M)=C^\infty(T_rM)\cdot \{ \varrho(X)|\ X\in \on{Lie}(\mathfrak{U}_{r,\H})\}
\subset \mf{X}(T_rM)\] 
is locally finitely generated and involutive: $[\D_\H,\D_\H]\subseteq\D_\H$. 
This singular foliation is $\AA_r$-invariant (since $\H_{-j}\subset\H_{-j-1}$),  $TM$-invariant (since $\H_{-r}(M)=\mf{X}(M)$), and $(\R,\cdot)$-invariant (since the lifts $X^{(-j)}$ are homogeneous).  
If $\H_\bullet$ is a regular Lie filtration, so that $\H_{-j}=\Gamma(H_{-j})$, then
$\D_\H$ is a regular foliation of rank equal to $\sum_j \on{rank}(H_{-j})$.  For the following result, we assume $\H_0=0$.

\begin{theorem}
Let $M$ be a manifold with a singular Lie filtration $\H_{-r}\cdots \supset \cdots \supset \H_{-1}\supset 0$, and let $N\subset M$ be an $\H_\bullet$-clean submanifold. Then the graded subbundle $Q\subset T_rM$ corresponding to the weighting along $N$ is given by 
\begin{equation}\label{eq:q} Q=\mathfrak{U}_{r,\H}^-\cdot T_rN.\end{equation}
Its linear approximation is 
\[ Q_\lin=\ti{F}_{-1}\oplus\cdots \oplus \ti{F}_{-r}\]
where $\ti{F}_{-i}|_m=T_mN+\H_{-i}|_m$. 
\end{theorem}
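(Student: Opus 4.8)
The strategy is to verify the two assertions by working in the local weighted coordinates constructed in the proof of Theorem \ref{th:mainresult}, and then to identify the resulting local description of $Q$ with the right-hand side of \eqref{eq:q}. First I would recall that, by the characterization of $Q$ in the theorem of \cite{loi:wei} quoted above, $TQ$ is spanned by the lifts $X^{(-i)}$ with $X^{(-i)}$ tangent to $Q$, and that $Q$ comes from the weighting precisely when $(\mathfrak{U}_r)_Q$ acts locally transitively on $Q$. So it suffices to show two things: (i) that $Q' := \mathfrak{U}_{r,\H}^- \cdot T_rN$ is a graded subbundle of $T_rM$ along $N$, and (ii) that the weighting attached to it via \eqref{eq:filtration_fromq} coincides with the weighting from Theorem \ref{th:mainresult}. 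For (i), the key point is that $\mathfrak{U}_{r,\H}^-$ is a group of diffeomorphisms of $T_rM$ preserving fibers over $M$ (its elements fix the base point, since $U_0 = \on{id}$), so $Q'$ is a union of orbits through $T_rN$; the $\H_\bullet$-cleanness hypothesis is exactly what makes the dimensions of these orbit unions constant along $N$, hence what makes $Q'$ smooth. Concretely, in the frame $V_a \in \H_{-w_a}(U)$ from the proof of Theorem \ref{th:mainresult}, the generators $\varrho(V_a \epsilon^{j})$ for $j \ge w_a$ span, at points of $T_rN$, a complement to $T_rN$ whose fiberwise span is constant — this is where cleanness enters, precisely as in Lemma \ref{lem:clean}.

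Next I would compute $Q'$ in the weighted coordinates $x_1,\dots,x_n$ on $U$ (with $x_a$ of weight $w_a$) produced in the proof of Theorem \ref{th:mainresult}, in which moreover the $V_a$ become the coordinate vector fields $\p/\p x_a$ up to lower-order terms. In those coordinates $T_rN \subset T_rU$ is cut out by $x_a^{(i)} = 0$ for all $i$ whenever $w_a > 0$, together with $x_b^{(i)} = 0$ for $i \ge 1$ when $w_b = 0$; applying the flow of $\varrho(V_a \epsilon^{j}) = V_a^{(-j)}$, which to leading order is $\p/\p x_a^{(j)}$ (shifted appropriately), pushes $T_rN$ in exactly the directions $\p/\p x_a^{(i)}$ with $i \ge w_a$. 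Thus $\mathfrak{U}_{r,\H}^- \cdot T_rN$ is, in these coordinates, the submanifold defined by the vanishing of all $x_a^{(i)}$ with $w_a > i$ — which is precisely the coordinate description of $Q$ recorded just after the theorem of \cite{loi:wei}. This matches \eqref{eq:filtration_fromq}: a function $f$ lies in $C^\infty(U)_{(i)}$ iff $f^{(i-1)}|_{Q} = 0$, and in the coordinate model this is the condition that the order-$<i$ weighted Taylor coefficients of $f$ vanish on $N$, which is the filtration of Theorem \ref{th:mainresult}. Since $Q$ is uniquely determined by the weighting, this identifies $Q$ with $\mathfrak{U}_{r,\H}^- \cdot T_rN$.

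For the linear approximation, I would apply the normal bundle functor $\nu(-,N)$ to the inclusion $Q \subset T_rM$. On the one hand $(T_rM)_\lin|_N \cong TM \otimes \AA_r^-$ by the remark following the last Lemma in Section \ref{sec:trm}. On the other hand, differentiating the orbit description $Q = \mathfrak{U}_{r,\H}^- \cdot T_rN$ along $N$, the normal directions to $N$ inside $Q$ are spanned by the images of $\varrho(X)|_N \bmod T_rN$ for $X = \sum_j X_j \epsilon^j$ with $X_j \in \H_{-j}$, i.e.\ by the $X_j^{(-j)}|_N \bmod TN$; under the identification $(T_rM)_\lin = \bigoplus_{i=1}^r (TM|_N)\epsilon^i$ these land in degree $i$ exactly the classes of $\H_{-i}|_m \bmod T_mN$ together with all of $TN$ in each degree (since $\H_{-r} = \mf X_M$ contributes $T_mM$ in degree $r$, and lower $\H_{-i}$ contribute in degree $i$). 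Collecting, the degree-$i$ piece of $Q_\lin$ is $(T_mN + \H_{-i}|_m) = \ti F_{-i}|_m$, giving $Q_\lin = \ti F_{-1} \oplus \cdots \oplus \ti F_{-r}$ as claimed.

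\textbf{Main obstacle.} The delicate step is (i): showing that $\mathfrak{U}_{r,\H}^- \cdot T_rN$ is a \emph{smooth} subbundle rather than merely a set-theoretic flow-out. This is where one must genuinely use $\H_\bullet$-cleanness (not just involutivity), and the cleanest route is to reduce it — via the adapted frame $V_a$ and the weighted coordinates of Theorem \ref{th:mainresult} — to the explicit coordinate model above, where smoothness is manifest; one then checks that this local model is independent of the choices, which follows because any two are related by an element of the isotropy group $(\mathfrak{U}_{r,\H})_Q$, or more simply because $Q$ is characterized intrinsically by \eqref{eq:filtration_fromq} once one knows the filtration of Theorem \ref{th:mainresult} is a genuine weighting.
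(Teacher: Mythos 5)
Your proposal is correct, and its two essential ingredients are the same as in the paper's proof: the tangency to $Q$ of the lifts $X^{(-j)}$ for $X\in\H_{-j}$ (coming from $\H_{-j}\subset\mf{X}_{M,(-j)}$), and a rank count along $T_rN$ using an adapted frame $V_a$ with $V_a\in\H_{-i}(U)$ for $k_0<a\le k_i$. The organization differs in one useful respect: the paper never needs to establish smoothness of the flow-out a priori, which is what you single out as the main obstacle. Instead it proves two inclusions against the subbundle $Q$ already known to exist and be smooth by Theorem \ref{th:mainresult}: the inclusion $\mathfrak{U}_{r,\H}^-\cdot T_rN\subset Q$ is immediate from $\mathfrak{U}_{r,\H}^-\subset(\mathfrak{U}_r^-)_Q$ together with $T_rN\subset Q$, and the reverse inclusion follows because at each $x\in T_rN$ the orbit directions together with $T_x(T_rN)$ span a subspace of $T_xQ$ of dimension $(r+1)k_0+\sum_{i=1}^r(k_i-k_0)=\dim Q$. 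This uses only the adapted frame, not the full weighted coordinates; your reduction to the explicit coordinate model $\{x_a^{(i)}=0:\ w_a>i\}$ proves the same spanning statement with more machinery, and the phrase ``to leading order is $\p/\p x_a^{(j)}$'' should be replaced by the exact tangency statement, since the lower-order corrections of $V_a$ are precisely what could a priori push the flow off that model. (One point both treatments leave implicit: the spanning argument only shows the flow-out is open in $Q$ near $T_rN$; to conclude it is all of $Q$ one can use the $(\R,\cdot)$-invariance of the flow-out together with $\kappa_t(q)\to\kappa_0(q)\in T_rN$.) Your derivation of $Q_\lin$ by differentiating the orbit description along $N$ is correct and is in fact more explicit than the paper, where this part is left implicit in the dimension count.
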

\begin{proof}
Let $Q\subset T_rM$ be the graded subbundle defined by the weighting. We have $Q=(\mf{U}_r^-)_Q\cdot T_rN$ since this is true for any weighting.  Since $\mathfrak{U}_{r,\H}^-\subset (\mf{U}_r^-)_Q$, this proves the inclusion $\supset$ in \eqref{eq:q}. For the opposite inclusion, we use a dimension count. Recall that $\dim Q=\sum_{i=0}^r k_i$ where $k_i=\dim F_{-i}$. On the other hand, given $m\in N$, choose  
an open neighborhood $U\subset M$ and a local frame $V_1,\ldots,V_n\in \mf{X}(U)$ of $TM|_U$, 
with the property that $V_1,\ldots,V_{k_0}$ are tangent to $N$, and $V_a\in \H_{-i}(U)$ for $k_0<a\le k_i$. 
Then $V_a^{(i)}$ for $1\le a\le n$ and $0\le i\le r$ are a local frame 
for  $T(T_rU)$. Since the $V_a^{(i)}$ for $a\le k_0$ restrict to a frame for $T(T_r(N\cap U))$, it follows that 
the  $V_a^{(i)}$ for $a>k_0$ span a complement of $T(T_r(N\cap U))$. We hence see that at any point  $x\in T_rN$, 
with base point $m\in N$, the tangent space to $T_rN$ together with the orbit directions for the  $\mathfrak{U}_{r,\H}^-$-action span subspaces 
of dimension 
\begin{align*}
\lefteqn{\dim T_rN+\sum_{i=1}^r \big(\dim \H_{-i}|_m-\dim (\H_{-i}|_m\cap T_mN)\big)}\\
&=
(r+1)\dim N+\sum_{i=1}^r \big(\on{dim}\wt{F}_{-i}|_m-\dim T_mN \big)\\
&=(r+1)k_0+\sum_{i=1}^r  (k_i-k_0)=k_0+\ldots+k_r=\dim Q \qedhere
\end{align*}
\end{proof}

\section{Weighted normal bundles from singular Lie filtrations}
Given a regular Lie filtration $TM=H_{-r}\supset \cdots \supset H_{-1}\supset 0$, the associated graded 
bundle $\mf{p}=\on{gr}(TM)$ inherits a fiberwise Lie bracket, turning it into a family of nilpotent Lie algebras \cite{tan:dif}. In \cite{erp:tan}, this is called the \emph{osculating Lie algebroid}, and the family of 
Lie groups $P\to M$ integrating it is called the \emph{osculating Lie groupoid}. Given an $H$-filtered submanifold 
$N\subset M$, there is the osculating Lie groupoid $R\to N$ for the induced Lie filtration on $N$.  
Haj-Higson proved that the weighted normal bundle $\nu_\W(M,N)$ is the quotient
$P|_N/R$. We will generalize this observation to singular Lie filtrations. 

\subsection{Lie algebras from singular Lie filtrations}
Suppose $M$ is a manifold equipped with a singular Lie filtration $\mf{X}_M=\H_{-r}\supset\cdots \supset \H_{-1}\supset \H_0\supset 0$. 
Consider the  sheaf of negatively graded Lie algebras
\begin{equation} 
\label{eq:assocgr}
\bigoplus_{i=1}^r \H_{-i}/\H_{-i+1}.
\end{equation}
Pulling \eqref{eq:assocgr} back to a given point $m\in M$ (as $C^\infty_M$-modules), we obtain a  negatively graded vector space
\begin{equation} 
\label{eq:assocgrfibres}
\mf{p}_m=\bigoplus_{i=1}^r \mf{p}^{-i}_m, \qquad \mf{p}^{-i}_m=\H_{-i}/(\H_{-i+1}+\I_m\H_{-i}).
\end{equation}
where  $\ca{I}_m\subset C^\infty_M$ is the vanishing ideal. 
\begin{lemma}\label{lem:descent}
The Lie bracket on vector fields descends to Lie brackets on the vector spaces $\mathfrak{p}_m$, compatible with the grading. 	
\end{lemma}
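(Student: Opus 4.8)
The plan is to verify the descent of the Lie bracket in two stages: first check that the bracket on vector fields descends to a bracket on the sheaf-level graded object $\bigoplus_i \H_{-i}/\H_{-i+1}$, and then check that evaluation at $m$ (i.e.\ passing to the quotient by $\I_m$) is compatible with this bracket. For the first stage, I would argue as follows. The bracket condition $[\H_{-i},\H_{-j}]\subset \H_{-i-j}$ gives a well-defined $\R$-bilinear map $\H_{-i}\times \H_{-j}\to \H_{-i-j}/\H_{-i-j+1}$; I must show it kills $\H_{-i+1}\times \H_{-j}$ and $\H_{-i}\times \H_{-j+1}$, i.e.\ that $[\H_{-i+1},\H_{-j}]$ and $[\H_{-i},\H_{-j+1}]$ both land in $\H_{-i-j+1}$. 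But $\H_{-i+1}\subset\H_{-(i-1)}$ so $[\H_{-i+1},\H_{-j}]\subset[\H_{-(i-1)},\H_{-j}]\subset\H_{-(i-1)-j}=\H_{-i-j+1}$, and symmetrically for the other factor. This gives a $C^\infty_M$-bilinear? --- no, the Lie bracket is only $\R$-bilinear, not $C^\infty_M$-bilinear, so I should be careful to state the first stage as a bracket of sheaves of $\R$-Lie algebras (the $C^\infty_M$-module structure plays a role only in the second stage).

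For the second stage I must show that the induced $\R$-bilinear map $\mf{p}^{-i}_m\times \mf{p}^{-j}_m\to \mf{p}^{-i-j}_m$ is well-defined, i.e.\ that modifying a representative $X\in\H_{-i}$ by an element of $\H_{-i+1}+\I_m\H_{-i}$ does not change the class of $[X,Y]$ in $\mf{p}^{-i-j}_m=\H_{-i-j}/(\H_{-i-j+1}+\I_m\H_{-i-j})$. The $\H_{-i+1}$ part is handled by stage one. For the $\I_m\H_{-i}$ part: if $f\in\I_m$ and $X\in\H_{-i}$, then the Leibniz rule gives $[fX,Y]=f[X,Y]-(Yf)X$. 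The first term $f[X,Y]$ lies in $\I_m\H_{-i-j}$ since $[X,Y]\in\H_{-i-j}$. The second term $(Yf)X$ lies in $\I_m\H_{-i}$ --- wait, that is not obviously in $\I_m\H_{-i-j}$ or in $\H_{-i-j+1}$. This is the key point: I need $(Yf)X$ to vanish in $\mf{p}^{-i-j}_m$. Since $X\in\H_{-i}\supset\H_{-i-j}\supset\ldots$? No --- $\H_{-i}\subset\H_{-i-j}$ for $j\ge 0$, so $X\in\H_{-i-j}$, and I need $(Yf)X\in\H_{-i-j+1}+\I_m\H_{-i-j}$. If $j\ge 1$ then $X\in\H_{-i}\subset\H_{-i-j+1}$, so $(Yf)X\in\H_{-i-j+1}$ and we are done. (If $j=0$ the statement $[fX,Y]\equiv f[X,Y]$ needs $(Yf)X\in\I_m\H_{-i}$, which holds iff $Yf\in\I_m$, i.e.\ iff $Y$ is tangent to the point $\{m\}$, which is automatic; more carefully one checks $(Yf)(m)=0$ when $f\in\I_m$, so $(Yf)\in\I_m$ and $(Yf)X\in\I_m\H_{-i}$.) The symmetric computation with $Y\mapsto gY$ is identical. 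Finally, the Jacobi identity and antisymmetry descend automatically since they hold upstairs and the quotient maps are $\R$-Lie algebra morphisms in each stage, and compatibility with the grading is built into the construction $[\mf{p}^{-i}_m,\mf{p}^{-j}_m]\subset\mf{p}^{-i-j}_m$.

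The main obstacle is the Leibniz-rule bookkeeping in stage two: one has to confirm that the ``error terms'' $(Yf)X$ and $(Xg)Y$ produced when one rescales representatives by functions in $\I_m$ actually vanish in the target graded piece $\mf{p}^{-i-j}_m$. The clean way to organize this is to observe that $\I_m\H_{-i}\subset\H_{-i-1}\cap(\text{functions vanishing at }m)$ --- more precisely, that the subspace $\H_{-i+1}+\I_m\H_{-i}$ by which we quotient is an $\R$-Lie ideal in an appropriate sense relative to the bracket with $\H_{-j}$ for $j\ge 1$ --- and then the descent is formal. I would also remark (or leave to the reader) that this recovers the usual nilpotent Lie algebra grading $\mf{p}=\gr(TM)$ in the regular case, since there $\I_m\H_{-i}$ is exactly the kernel of evaluation $\H_{-i}\to H_{-i}|_m$ and the quotient is the fiber $H_{-i}|_m/H_{-i+1}|_m$.
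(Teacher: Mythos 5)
Your proof is correct and follows essentially the same route as the paper: the paper's single displayed inclusion $[\H_{-i+1}+\I_m\H_{-i},\H_{-j}]\subset \H_{-i-j+1}+\I_m\H_{-i-j}+\H_{-i}\subset \H_{-i-j+1}+\I_m\H_{-i-j}$ is exactly your two-stage argument, with the Leibniz term $(Yf)X$ absorbed into $\H_{-i}\subset\H_{-i-j+1}$ using $j\ge 1$, just as you observe. Your version merely unpacks the same computation in more detail.
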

\begin{proof}
For $0< i,j\le r$, we have 
\[ [\H_{-i+1}+\I_m\H_{-i},\H_{-j}]\subset \H_{-i-j+1}+\I_m\H_{-i-j}+\H_{-i}\subset  \H_{-i-j+1}+\I_m\H_{-i-j}.\]
We conclude that the bracket $[\cdot,\cdot]\colon \H_{-i}\times \H_{-j}\to \H_{-i-j}$ descends to the quotients, making \eqref{eq:assocgrfibres} into a negatively graded Lie algebra. 
\end{proof}


\begin{example}\label{ex:regular}
Consider the case of a  \emph{regular} Lie filtration, given by a filtration of the tangent bundle $TM=H_{-r}\supset\cdots \supset H_{-1}\supset H_0$. Here the spaces $\mf{p}_m$ are the fibers of the associated graded bundle 
$\mf{p}=\on{gr}(TM)=\bigoplus H^{-i}/H^{-i+1}$, and the bracket defines a Lie algebroid structure on  $\mf{p}$, with zero anchor. Following \cite{erp:tan}, we call  $\mf{p}$  the \emph{osculating Lie algebroid} of  the filtered manifold $(M,H_{-\bullet})$. The nilpotent Lie groups $P_m$ integrating $\mf{p}_m$ define 
the \emph{osculating groupoid} $P=\bigcup_{m\in M} P_m$. 
\end{example}

\subsection{Clean submanifolds}
For a submanifold $N\subset M$, let $\H_{-i}^N\subset \H_{-i}$ be the subsheaf of vector fields in $\H_{-i}$ that are furthermore tangent to $N$. Since $[\H_{-i}^N,\H_{-j}^N]\subset \H_{-i-j}^N$, this defines a singular Lie filtration $\H_\bullet^N$. Replacing $\H$ with $\H^N$ in the definition of $\mf{p}_m$, we obtain a graded Lie subalgebra
\[\mf{r}_m\subset \mf{p}_m\]
where $\mf{r}_m^{-i}\subset \mf{p}_m^{-i}$ is the image of $\H_{-i}^N$ under the quotient map.  
Let $R_m\subset P_m$ be the nilpotent Lie groups integrating $\mf{r}_m\subset \mf{p}_m$.

\begin{example}\label{ex:weightingexample}
	Given a weighting of $M$ along a submanifold $N$, let $\K_{-i}=\mf{X}_{M,(-i)}$ for $i=0,\ldots,r$. 
	(See Example \ref{ex:weightinggivesliefiltration}.) 
	Then the graded Lie algebra bundle $\k=\gr(\mf{X}_M)^-\to N$ 
	from Section \ref{subsec:homogeneous} may be described, for all $m\in N$, as  
\begin{equation}\label{eqref:k2} \k_m=\bigoplus_{i=1}^r \k_m^{-i},\ \ \k_m^{-i}=\K_{-i}/(\K_{-i+1}+\I_m\K_{-i}),\end{equation}
	which is a special case of the construction of $\mf{p}_m$. 
	The images of $\K_{-i}^N$ define the summands 	of the graded Lie subalgebras 
	$\mf{l}_m\subset \k_m$. 
\end{example}

\begin{theorem}\label{th:hh}
Let $M$ be a manifold with a singular Lie filtration $\H_\bullet$, and $N\subset M$ an $\H_\bullet$-clean submanifold, 
with the corresponding weighting of $M$ along $N$. Then the fibers of the weighted normal bundle are 
\[ \nu_\W(M,N)|_m=P_m/R_m.\]
\end{theorem}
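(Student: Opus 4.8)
The plan is to compare the two descriptions of $\nu_\W(M,N)$ that are now available: the algebraic one as $\Hom_\alg(\mathsf{A}_m,\R)$ with $\mathsf{A}_m = \gr(C^\infty_M)_m$, and the homogeneous-space description $K/L$ from \eqref{eq:quot}, where $\k = \gr(\mf{X}_M)^-$ and $\mf{l} = \gr(\mf{X}_M^N)^-$ are built from the \emph{weighting-induced} filtration $\K_\bullet = \mf{X}_{M,(-\bullet)}$ on vector fields. So it suffices to produce, for each $m\in N$, a canonical isomorphism of graded Lie algebras $\k_m \cong \mf{p}_m$ carrying $\mf{l}_m$ onto $\mf{r}_m$; exponentiating then gives $K_m/L_m = P_m/R_m$, and combining with \eqref{eq:quot} finishes the proof.

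First I would set up the comparison map. By construction of the weighting (Theorem \ref{th:mainresult}), the vector fields in $\H_{-i}$ have filtration degree $-i$, i.e.\ $\H_{-i}\subset \K_{-i}$; hence there is a natural degree-preserving sheaf map $\bigoplus_i \H_{-i}/\H_{-i+1} \to \bigoplus_i \K_{-i}/\K_{-i+1}$, which is a morphism of graded Lie algebras since both brackets are induced from the bracket on $\mf{X}_M$. Pulling back to $m$ and using \eqref{eq:assocgrfibres}, \eqref{eqref:k2} gives a graded Lie algebra homomorphism $\phi_m\colon \mf{p}_m \to \k_m$, which visibly sends $\mf{r}_m$ into $\mf{l}_m$ because $\H_{-i}^N \subset \K_{-i}^N$. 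It remains to check $\phi_m$ is an isomorphism and restricts to an isomorphism $\mf{r}_m\xrightarrow{\sim}\mf{l}_m$.

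The main work — and the main obstacle — is the surjectivity/injectivity of $\phi_m$ in each degree, which I would verify using the adapted local frame from the proof of Theorem \ref{th:mainresult}. Near $m$ pick vector fields $V_a$, $a=k_0+1,\dots,n$, with $V_{k_0+1},\dots,V_{k_j}\in \H_{-j}(U)$ framing $F_{-j}|_{U\cap N}$; after the coordinate change these are the weighted coordinate vector fields $\p/\p x_a$. From the Remark following Section \ref{subsec:homogeneous}, $\k^{-i}_m$ is spanned by the (classes of the) $\p/\p x_a$ with $w_a=i$, i.e.\ by the $V_a$ with $k_{i-1}<a\le k_i$ — and these $V_a$ lie in $\H_{-i}(U)$ and project to a basis of $F_{-i}|_m/F_{-i+1}|_m \cong \wt F_{-i}|_m/\wt F_{-i+1}|_m$. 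But that same quotient is exactly the image of $\H_{-i}|_m$ modulo $\H_{-i+1}|_m + T_mN$, which is precisely $\mf{p}^{-i}_m$: indeed $\mf{p}^{-i}_m = \H_{-i}/(\H_{-i+1}+\I_m\H_{-i})$ has dimension $\dim \H_{-i}|_m - \dim(\H_{-i+1}|_m + \text{(evaluation of }\I_m\H_{-i}))$, and the cleanness hypothesis is what guarantees $\dim \wt F_{-i}|_m$, hence $\dim \mf{p}^{-i}_m$, is the locally constant value $k_i - k_{i-1} = \dim \k^{-i}_m$. So $\phi_m$ is a degree-wise bijection.

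Finally I would treat $\mf{r}_m$ and $\mf{l}_m$ the same way: the adapted frame splits $V_{k_0+1},\dots,V_{k_i}$ into those tangent to $N$ and those transverse, and Lemma \ref{lem:clean} (applied to each $\H_{-i}$) says the dimension of $T_mN\cap\H_{-i}|_m$ is locally constant, so $\mf{r}^{-i}_m$ (the image of $\H_{-i}^N$) and $\mf{l}^{-i}_m$ (the image of $\K_{-i}^N$, equivalently of $\I_m\mf{X}_M\cap\mf{X}_{M,(-i)}$, by the Remark after Section \ref{subsec:homogeneous}) have the same dimension $\dim(T_mN\cap\H_{-i}|_m) - \dim(T_mN\cap\H_{-i+1}|_m)$; since $\phi_m(\mf{r}_m)\subset\mf{l}_m$ and both are bijected onto a common subquotient of $\nu(M,N)$ inside $\k_m\cong\nu_\W(M,N)_\lin$, equality of dimensions forces $\phi_m(\mf{r}_m)=\mf{l}_m$. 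Exponentiating the isomorphism $(\k_m,\mf{l}_m)\cong(\mf{p}_m,\mf{r}_m)$ of nilpotent graded Lie algebra pairs yields $K_m/L_m\cong P_m/R_m$, and with \eqref{eq:quot} we conclude $\nu_\W(M,N)|_m = P_m/R_m$. I expect the delicate point to be bookkeeping the two different ideals $\I_m\H_{-i}$ versus $\I_m\K_{-i}$ and confirming they produce the claimed subquotients; the adapted frame makes this manageable but it needs to be done carefully in each graded degree.
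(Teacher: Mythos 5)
Your setup is exactly the paper's: the inclusions $\H_{-i}\hra\K_{-i}$ and $\H_{-i}^N\hra\K_{-i}^N$ induce a graded Lie algebra morphism $\phi_m\colon\mf{p}_m\to\k_m$ carrying $\mf{r}_m$ into $\mf{l}_m$, and the goal is to combine this with \eqref{eq:quot}. The gap is your central claim that $\phi_m$ is a degree-wise bijection; it is false in general, and the step where you derive it rests on a misreading of the Remark in Section \ref{subsec:homogeneous}. That Remark says the \emph{quotient} $\k^{-i}/\mf{l}^{-i}$ is spanned by the $\p/\p x_a$ with $w_a=i$; the fiber $\k^{-i}_m$ itself is spanned by all classes $x^s\,\p/\p x_a$ with $w\cdot s-w_a=-i$ (and $s_b=0$ for $w_b=0$), which is strictly larger once $r\ge 2$. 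For instance, for weights $(1,2)$ on $\R^2$ with $N=\{0\}$, $\k^{-1}_0$ is two-dimensional (spanned by $\p/\p x_1$ and $x_1\p/\p x_2$) while $\mf{p}^{-1}_0$ is one-dimensional, so $\phi_0$ is not surjective in degree $-1$. Symmetrically, your identification of $\mf{p}^{-i}_m$ with $\wt F_{-i}|_m/\wt F_{-i+1}|_m$ fails: $\mf{p}^{-i}_m=\H_{-i}/(\H_{-i+1}+\I_m\H_{-i})$ need not embed in $T_mM$ for a singular distribution (the module generated by $x\,\p/\p x$ on $\R$ at $m=0$ has $\H/\I_0\H$ one-dimensional while $\H|_0=0$), so you are conflating $\mf{p}^{-i}_m$ with $\mf{p}^{-i}_m/\mf{r}^{-i}_m$. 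The same issues invalidate the dimension count for $\mf{r}_m$ versus $\mf{l}_m$ in your last paragraph.

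What survives, and is what the paper actually proves, is the weaker statement that the \emph{induced map on quotients} $\mf{p}_m/\mf{r}_m\to\k_m/\mf{l}_m$ is an isomorphism, because each graded piece on either side is identified with $F_{-i}/F_{-i+1}|_m$ (for $\k/\mf{l}$ this is stated in Section \ref{subsec:homogeneous}; for $\mf{p}_m/\mf{r}_m$ it follows from the adapted frame and cleanness, since the extra generators of $\mf{p}_m$ invisible in $\nu(M,N)$ all lie in the image of $\H_{-i}^N$). One then exponentiates $\phi_m$ to an action of $P_m$ on $K_m/L_m=\nu_\W(M,N)|_m$: the isomorphism of quotients shows this action is transitive with stabilizer $R_m$, whence $P_m/R_m\cong K_m/L_m$, with no need for $\phi_m$ itself to be injective or surjective. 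If you replace your ``degree-wise bijection'' step by this stabilizer argument, the rest of your outline goes through.
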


\begin{proof}
The singular Lie filtration $\H_\bullet$ determines a weighting of $M$ along the $\H_\bullet$-clean submanifold $N$. We shall use the notation from Example \ref{ex:weightingexample} for this weighting. The inclusion maps 
\begin{equation}\label{eq:inclusionmaps} 
\H_{-i}\hra \K_{-i},\ \ \ i=0,\ldots,r
\end{equation}
 determine a morphism of sheaves of graded Lie algebras $\on{gr}(\H)\to \on{gr}(\K)$; upon pullback to 
$m\in N$ this becomes a Lie algebra morphism
\begin{equation}
\label{eq:pmtokm} 
\mf{p}_m\to \k_m. 
\end{equation}
The map \eqref{eq:inclusionmaps} restricts to inclusions $\H_{-i}^N\hra \K_{-i}^N,\ i=0,\ldots,r$, 
hence \eqref{eq:pmtokm} takes $\mf{r}_m$ to $\mf{l}_m$, and induces a map
\begin{equation}
\label{eq:pmtokm2}
\mf{p}_m/\mf{r}_m\rightarrow \k_m/\mf{l}_m.
\end{equation}
Since $\mf{p}_m^{-i}/\mf{r}_m^{-i}$ and $\k_m^{-i}/\mf{l}_m^{-i}$ are both identified with 
$F_{-i}/F_{-i+1}|_m$, this map is an isomorphism.

Exponentiating \eqref{eq:pmtokm} defines an action of $P_m$ on $K_m/L_m=\nu_\W(M,N)|_m$, and \eqref{eq:pmtokm2} implies that the stabilizer of this action is $R_m$. That is, 
\[ P_m/R_m\cong  K_m/L_m=\nu_\W(M,N)|_m.\]

With the additional assumption that $\dim \H_{-i}|_m$ is constant 
for $m\in N$, the Lie groups $P_m$ assemble into a smooth family, i.e., a Lie groupoid $P|_N\to N$, and similarly for  
$R_m$. 
\end{proof}

For the case of a regular Lie filtration, we saw in Example \ref{ex:regular} that the Lie algebras $\mf{p}_m$ combine into a
locally trivial vector bundle, the osculating Lie algebroid  $\mf{p}\to M$. Similarly, the induced Lie filtration 
of $N$ given by the bundles $H_{-i}|_N\cap TN$ defines the osculating Lie algebroid $\mf{r}\to N$. 
Exponentiating to the corresponding osculating Lie groupoids, we then obtain 
\begin{equation}\label{eq:newquot} \nu_\W(M,N)=P|_N/R.\end{equation}
This recovers the result of Haj-Higson \cite{hig:eul}. 

For more  singular Lie filtrations, the Lie algebras $\mf{p}_m$ do not combine into a vector bundle, unless the dimension of the graded summands $\mf{p}^{-i}_m$ are constant. In fortunate situations, this can happen along submanifolds $N\subset M$, and in this case
\[ \mf{p}|_N=\bigcup_{m\in N}\mf{p}_m\] 
will be a Lie algebroid over $N$, with integrating Lie groupoid $P|_N=\bigcup_{m\in N}P_m\to N$. If $N\subset M$ is $\H_\bullet$-clean, then it follows that the $\mf{r}_m^{-i}$ have constant dimension as well, and so define a Lie algebroid 
$\mf{r}\to N$, integrating to $R\to N$.  In these cases, we again have the presentation of the weighted normal bundle as a quotient \eqref{eq:newquot}. 
\begin{example}
	Suppose $\H_\bullet$ is a singular Lie filtration, and 
	$\F$ is a singular foliation with the property $[\F,\H_{-i}]\subset \H_{-i}$ for all $i$. Then the local flow of vector fields in $\F$ acts by automorphisms of the singular Lie filtration. Hence, if $N$ is a leaf (or an open subset of a leaf) of $\F$, then 
	it is automatic that $N$ is $\H_\bullet$-clean, and that $\mf{r}\subset \mf{p}|_N$ are well-defined Lie algebroids over $N$.  
\end{example}

\bibliographystyle{amsplain} 

\begin{thebibliography}{10}
	
	\bibitem{agr:com}
	A.~Agrachev, D.~Barilari, and U.~Boscain, \emph{A comprehensive introduction to
		sub-riemannian geometry}, Cambridge Studies in Advanced Mathematics,
	Cambridge University Press, 2019.
	
	\bibitem{alb:res}
	P.~Albin and R.~Melrose, \emph{Resolution of smooth group actions}, Spectral
	theory and geometric analysis, Contemp. Math., vol. 535, Amer. Math. Soc.,
	Providence, RI, 2011, pp.~1--26. \MR{2560748}
	
	\bibitem{and:conv}
	I.~Androulidakis, O.~Mohsen, and R.~Yuncken, \emph{{The convolution algebra of
			Schwarz kernels on a singular foliation}}, Preprint (2019), arXiv:1910.02623.
	
	\bibitem{and:inprep}
	I.~Androulidakis, O.~Mohsen, R.~Yuncken, and E.van Erp, in preparation.
	
	\bibitem{and:hol}
	I.~Androulidakis and G.~Skandalis, \emph{The holonomy groupoid of a singular
		foliation}, J. Reine Angew. Math. \textbf{626} (2009), 1--37.
	
	\bibitem{and:ste}
	I.~Androulidakis and M.~Zambon, \emph{Stefan-{S}ussmann singular foliations,
		singular subalgebroids and their associated sheaves}, Int. J. Geom. Methods
	Mod. Phys. \textbf{13} (2016), no.~suppl., 1641001, 17.
	
	\bibitem{bel:tan}
	A.~Bella\"{\i}che, \emph{The tangent space in sub-{R}iemannian geometry},
	Sub-{R}iemannian geometry, Progr. Math., vol. 144, Birkh\"{a}user, Basel,
	1996, pp.~1--78.
	
	\bibitem{choi:priv2}
	W.~Choi and R.~Ponge, \emph{Privileged coordinates and nilpotent approximation
		for {C}arnot manifolds, {II}. {C}arnot coordinates}, J. Dyn. Control Syst.
	\textbf{25} (2019), no.~4, 631--670.
	
	\bibitem{choi:priv1}
	\bysame, \emph{Privileged coordinates and nilpotent approximation of {C}arnot
		manifolds, {I}. {G}eneral results}, J. Dyn. Control Syst. \textbf{25} (2019),
	no.~1, 109--157.
	
	\bibitem{choi:tan}
	\bysame, \emph{Tangent maps and tangent groupoid for {C}arnot manifolds},
	Differential Geom. Appl. \textbf{62} (2019), 136--183.
	
	\bibitem{con:non}
	A.~Connes, \emph{Noncommutative geometry}, Academic Press, Inc., San Diego, CA,
	1994.
	
	\bibitem{dav:bgg}
	S.~Dave and S.~Haller, \emph{{Graded hypoellipticity of BGG sequences}}, arXiv
	1705.01659.
	
	\bibitem{dav:hea}
	\bysame, \emph{The heat asymptotics on filtered manifolds}, J. Geom. Anal.
	\textbf{30} (2020), no.~1, 337--389.
	
	\bibitem{deb:lie}
	C.~Debord and G.~Skandalis, \emph{{ Lie groupoids, pseudodifferential calculus
			and index theory}}, Advances in Noncommutative Geometry. On the Occasion of
	Alain Connes' 70th Birthday., Springer International Publishing, 2019, arXiv:
	1907.05258.pdf, pp.~245--289.
	
	\bibitem{gol:sta}
	M.~Golubitsky and V.~Guillemin, \emph{Stable mappings and their singularities},
	Graduate Texts in Mathematics, Vol. 14, Springer-Verlag, New York-Heidelberg,
	1973. \MR{0341518}
	
	\bibitem{gra:hig}
	J.~Grabowski and M.~Rotkiewicz, \emph{Higher vector bundles and multi-graded
		symplectic manifolds}, J. Geom. Phys. \textbf{59} (2009), no.~9, 1285--1305.
	
	\bibitem{gra:gra}
	\bysame, \emph{Graded bundles and homogeneity structures}, J. Geom. Phys.
	\textbf{62} (2012), no.~1, 21--36.
	
	\bibitem{gu:bir}
	V.~Guillemin and S.~Sternberg, \emph{Birational equivalence in the symplectic
		category}, Invent. Math. \textbf{97} (1989), no.~3, 485--522. \MR{1005004}
	
	\bibitem{hig:eul}
	A.R. {Haj~Saeedi~Sadegh} and N.~Higson, \emph{Euler-like vector fields,
		deformation spaces and manifolds with filtered structure}, Documenta
	Mathematica \textbf{23} (2018), 293--325.
	
	\bibitem{kol:nat}
	I.~Kol\'a\v{r}, P.~Michor, and J.~Slov\'ak, \emph{Natural operations in
		differential geometry}, Springer-Verlag, Berlin, 1993.
	
	\bibitem{loi:wei}
	Y.~Loizides and E.~Meinrenken, \emph{{Differential geometry of weightings}},
	Preprint (2020), arXiv:2010.01643.
	
	\bibitem{me:eul}
	E.~Meinrenken, \emph{{Euler-like vector fields, normal forms, and isotropic
			embeddings}}, Indagationes Mathematicae \textbf{32} (2021), 224--245.
	
	\bibitem{mel:lie}
	A.~Melin, \emph{Lie filtrations and pseudo-differential operators}, Preprint,
	1982.
	
	\bibitem{mel:cor}
	R.~B. Melrose, \emph{Differential analysis on manifolds with corners},
	Manuscript, available at http://www-math.mit.edu/~rbm/book.html.
	
	\bibitem{moh:ind}
	O.~Mohsen, \emph{Index theorem for inhomogeneous hypoelliptic differential
		operators}, Preprint, 2020, arXiv 2001.00488.pdf.
	
	\bibitem{moh:def}
	\bysame, \emph{On the deformation groupoid of the inhomogeneous
		pseudo-differential calculus}, Bull. Lond. Math. Soc. \textbf{53} (2021),
	no.~2, 575--592.
	
	\bibitem{mor:pro}
	A.~Morimoto, \emph{{Prolongation of connections to bundles of infinitely near
			points}}, Journal of Differential Geometry \textbf{11} (1976), no.~4, 479 --
	498.
	
	\bibitem{mor:geo}
	T.~Morimoto, \emph{Geometric structures on filtered manifolds}, Hokkaido Math.
	J. \textbf{22} (1993), no.~3, 263--347.
	
	\bibitem{oka:pro}
	E.~Okassa, \emph{Prolongement des champs de vecteurs \`a des vari\'{e}t\'{e}s
		de points proches}, Ann. Fac. Sci. Toulouse Math. (5) \textbf{8} (1986/87),
	no.~3, 349--366.
	
	\bibitem{pan:sub}
	P.~Pansu, \emph{{Submanifolds and differential forms on Carnot manifolds, after
			M. Gromov and M. Rumin}}, Preprint (2016).
	
	\bibitem{tan:dif}
	N.~Tanaka, \emph{On differential systems, graded {L}ie algebras and
		pseudogroups}, J. Math. Kyoto Univ. \textbf{10} (1970), 1--82.
	
	\bibitem{erp:ati1}
	E.~van Erp, \emph{The {A}tiyah-{S}inger index formula for subelliptic operators
		on contact manifolds. {P}art {I}}, Ann. of Math. (2) \textbf{171} (2010),
	no.~3, 1647--1681.
	
	\bibitem{erp:ati2}
	\bysame, \emph{The {A}tiyah-{S}inger index formula for subelliptic operators on
		contact manifolds. {P}art {II}}, Ann. of Math. (2) \textbf{171} (2010),
	no.~3, 1683--1706.
	
	\bibitem{erp:tan}
	E.~van Erp and R.~Yuncken, \emph{On the tangent groupoid of a filtered
		manifold}, Bull. Lond. Math. Soc. \textbf{49} (2017), no.~6, 1000--1012.
	
	\bibitem{erp:gro}
	\bysame, \emph{A groupoid approach to pseudodifferential calculi}, J. Reine
	Angew. Math. \textbf{756} (2019), 151--182.
	
\end{thebibliography}

\def\cprime{$'$} \def\polhk#1{\setbox0=\hbox{#1}{\ooalign{\hidewidth
			\lower1.5ex\hbox{`}\hidewidth\crcr\unhbox0}}} \def\cprime{$'$}
\def\cprime{$'$} \def\cprime{$'$} \def\cprime{$'$} \def\cprime{$'$}
\def\polhk#1{\setbox0=\hbox{#1}{\ooalign{\hidewidth
			\lower1.5ex\hbox{`}\hidewidth\crcr\unhbox0}}} \def\cprime{$'$}
\def\cprime{$'$} \def\cprime{$'$} \def\cprime{$'$} \def\cprime{$'$}
\providecommand{\bysame}{\leavevmode\hbox to3em{\hrulefill}\thinspace}
\providecommand{\MR}{\relax\ifhmode\unskip\space\fi MR }
\providecommand{\MRhref}[2]{%
	\href{http://www.ams.org/mathscinet-getitem?mr=#1}{#2}
}
\providecommand{\href}[2]{#2}

\end{document}